\documentclass[a4paper,12pt]{amsart}

\usepackage{mathrsfs}

\usepackage{amssymb}
\usepackage{latexsym}
\usepackage{amsfonts}
\usepackage{amsmath}
\usepackage{eucal}
\usepackage{bm}
\usepackage{bbm}
\usepackage{graphicx}
\usepackage[english]{varioref}
\usepackage[nice]{nicefrac}
\usepackage[all]{xy}
\usepackage{amsthm}
\usepackage[margin=1.25in]{geometry}
\usepackage{amsmath,amscd}

\newcommand{\Id}{\textrm{id}}

\def\i{^{-1}}
\def\ge{\geqslant}
\def\le{\leqslant}
\def\<{\langle}
\def\>{\rangle}

\def\bc{\bold{c}}
\def\h{\text{ht}}

\def\dom{\text{dom}}

\def\a{\alpha}
\def\b{\beta}
\def\g{\gamma}

\def\d{\delta}
\def\D{\Delta}
\def\L{\Lambda}
\def\e{\epsilon}

\def\o{\omega}

\def\s{\sigma}
\def\t{\tau}

\def\k{\kappa}
\def\l{\lambda}

\def\ZZ{\mathbb Z}
\def\AA{\mathbb A}

\def\QQ{\mathbb Q}
\def\JJ{\mathbb J}
\def\FF{\mathbb F}

\def\kk{\bold{k}}

\def\ca{\mathcal A}

\def\cc{\mathcal C}

\def\cl{\mathcal L}

\def\co{\mathcal O}

\def\car{\mathcal R}

\def\cz{\mathcal Z}

\def\car{\mathcal R}

\def\fs{\mathfrak S}

\def\bc{\bold{c}}

\def\dft{{\rm def}}
\def\Irr{{\rm Irr}}

\def\h{{\rm h}}
\def\tp{{\rm top}}

\def\GL{{\rm GL}}
\def\GSp{{\rm GSp}}
\def\Res{{\rm Res}}
\def\inv{{\rm inv}}
\def\dft{{\rm def}}
\def\GU{{\rm GU}}

\theoremstyle{plain}
\newtheorem{thm}{Theorem}[section]
\newtheorem{conj}{Conjecture}[section]

\newtheorem*{thm*}{Theorem}
 \newtheorem{prop}[thm]{Proposition}
 \newtheorem{lem}[thm]{Lemma}
 \newtheorem{cor}[thm]{Corollary}

\theoremstyle{definition}

\theoremstyle{remark}

\newtheorem*{rmk}{Remark}
\newtheorem*{claim*}{Claim}

\begin{document}
\author{Sian Nie}
\address{Institute of Mathematics, Academy of Mathematics and Systems Science, Chinese Academy of Sciences, 100190, Beijing, China}
\email{niesian@amss.ac.cn}

\thanks{This work is supported by the National Natural Science Foundation of China (No. 11621061 and No. 11501547) and by Key Research Program of Frontier Sciences, CAS, Grant No. QYZDB-SSW-SYS007.}

\title{Semi-modules and irreducible components of affine Deligne-Lusztig varieties}
\begin{abstract}
Let $G$ be the Weil restriction of a general linear group. By extending the method of semi-modules developed by de Jong, Oort, Viehmann and Hamacher, we obtain a stratification of the affine Deligne-Lusztig varieties for $G$ (in the affine Grassmannian) attached to a minuscule coweight and a basic element. As an application, we verify a conjecture by Chen and Zhu on irreducible components of affine Deligne-Lusztig varieties for $G$.
\end{abstract}

\maketitle

\section*{Introduction}
Affine Deligne-Lusztig varieties are closely related to the Rapoport-Zink moduli spaces of $p$-divisible groups (cf. \cite{R}, \cite{RZ}), and play an important role in the study of Shimura varieties. There has been an extensive study on affine Deligne-Lusztig varieties. However, many basic aspects of their geometric structure are not fully understood yet. We refer to \cite{He} and \cite{HV} for the current status of these topics.

In this note, we study the affine Deligne-Lusztig varieties $X_\mu^G(\g)$ in affine Grassmannians, where $G$ is the Weil restriction of a general linear group. By extending the method of semi-modules (or extended EL-charts) developed by de Jong-Oort \cite{dJO}, Viehmann \cite{V} and Hamacher \cite{H}, we show that if $\g$ is basic and $\mu$ is minuscule, there is a stratification (in the loose sense) of $X_\mu^G(\g)$ parameterized by semi-modules. As an application, we verify a conjecture by Miaofen Chen and Xinwen Zhu concerned with the irreducible connected components of  $X_\mu^G(\g)$.

\

To describe the results more precisely, we introduce some notation. Let $\FF_q$ be a finite field with $q$ elements, and let $\kk$ be an algebraic closure of $\FF_q$. Denote by $F=\FF_q((t))$ and $L=\kk((t))$ the fields of Laurent series, whose integer rings are denoted by $\co_F=\FF[[t]]$ and $\co=\kk[[t]]$ respectively. Let $\s$ denote the Frobenius automorphism of $L / F$.

Let $G$ be a connected reductive group over $\FF_q$. Fix $S \subseteq T \subseteq B \subseteq G$, where $S$ is a maximal split torus, $T$ a maximal torus and $B$ a Borel subgroup of $G$. Denote by $X_*(T)$ for the cocharacter group of $T$, and by $X_*(T)_{G-\dom}$ the set of dominant cocharacters determined by $B$. Let $\leq$ denote the dominance partial order on $X_*(T)$ defined by $B$. Let $\le$ denote the partial order on $X_*(T)_\QQ$ such that $v \le v'$ if and only if $v' - v$ is a non-negative linear combination of positive coroots.

We have the Cartan decomposition $G(L) = \sqcup_{\l \in X_*(T)_{G-\dom}} K t^\l K$, where $K=G(\co)$. For $\g \in G(L)$ and $\mu \in X_*(T)$, the attached affine Deligne-Lusztig variety is defined by $$X_\mu^G(\g)=\{g K \in G(L)/K; g\i \g \s(g) \in K t^\mu K\}.$$ It carries a natural action by the group $\JJ_\g^G=\{g \in G(L); g\i \g \s(g)=\g\}$. By definition, $X_\mu^G(\g)$ only depends on the $\s$-conjugacy class $[\g]_G$ of $\g$. Thanks to Kottwitz \cite{Ko1}, $[\g]_G$ is uniquely determined by two invariants: the Newton point $\nu_G(\g) \in X_*(S)_{\QQ, G-\dom}$ and the Kottwitz point $\k_G(\g) \in \pi_1(G)_\s$; see \cite[\S 2.1]{HV}. By \cite{KR} and \cite{Ga}, $X_\mu^G(\g) \neq \emptyset$ if and only if $\k_G(t^\mu)=\k_G(\g)$ and $\nu_G(\g) \le \mu^\diamond$, where $\mu^\diamond$ denotes the $\s$-average of $\mu$. Moreover, thanks to \cite{GHKR}, \cite{V}, \cite{H}, \cite{Z} and \cite{HaV}, $X_\mu^G(\g)$ is locally of finte type, equi-dimensional and $$\dim X_\mu^G(\g)=\<\rho_G, \mu-\nu_G(\g)\> - \frac{1}{2} \dft_G(\g),$$ where $\rho_G$ is the half-sum of positive roots of $G$ and $ \dft_G(\g)$ denotes the {\it defect} of $\g$; see \cite[\S 1.9.1]{Ko2}.

The stratification by semi-modules was first considered by de Jong and Oort \cite{dJO} for $X_\mu^G(\g)$ with $G$ split, $\mu$ minuscule and $\g$ superbasic. It was latter extended by Viehmann \cite{V} and Hamacher \cite{H} to the case where $\g$ is superbasic. Recently, Chen and Viehmann \cite{CV} defined a stratification for all affine Deligne-Lusztig varieties, which recovers the stratification by semi-modules in the superbasic case.
\begin{thm} (=Corollary \ref{dec})
Suppose $G=\Res_{\FF_{q^d}/\FF_q} \GL_n$. If $\mu$ is minuscule and $\g$ is basic (i.e. $\nu_G(\g)$ is central for $G$), then there is a decomposition $$X_\mu^G(\g)=\sqcup_A \cl(A),$$ where $A$ ranges over all semi-modules of Hodge type $\mu$ (see \S\ref{setup}) and $\cl(A) \subseteq X_\mu^G(\g)$ is a locally closed subset, which is finite and smooth over an affine space.
\end{thm}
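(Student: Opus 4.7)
My strategy is to construct a map from $X_\mu^G(\gamma)$ to the set of semi-modules of Hodge type $\mu$ and identify its fibers with the strata $\cl(A)$. Using the standard isomorphism $\FF_{q^d} \otimes_{\FF_q} L \cong L^d$, one has $G(L) \cong \GL_n(L)^d$, so $G(L)/K$ parametrizes $d$-tuples $\Lambda_\bullet = (\Lambda_i)_{i \in \ZZ/d}$ of $\co$-lattices in $L^n$. The $\sigma$-twisted action of $\gamma$ provides $L$-linear identifications between consecutive $\Lambda_i$'s, and the condition $g^{-1}\gamma\sigma(g) \in K t^\mu K$ becomes, because $\mu$ is minuscule, a very restrictive relative position condition on the pairs $(\Lambda_i, \gamma_i\sigma(\Lambda_{i-1}))$ at each slot.

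Since $\gamma$ is basic, after $\sigma$-conjugation I can choose a standard representative that acts on $L^n$ (viewed as an isocrystal over each slot) as a direct sum of isoclinic superbasic blocks of equal slope. Fix a basis $\{e_\xi\}_{\xi \in \Xi}$ adapted to this decomposition, and to a lattice tuple $\Lambda_\bullet$ assign the subset $A(\Lambda_\bullet) \subseteq \Xi$ consisting of all leading-term indices (in the $t$-adic filtration) of vectors of $\Lambda_\bullet$; this generalizes the de Jong--Oort--Viehmann--Hamacher recipe from the superbasic case. One checks that $A(\Lambda_\bullet)$ is a semi-module of Hodge type $\mu$ in the sense of \S\ref{setup}, the Hodge-type control being immediate from minusculeness. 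The stratum $\cl(A)$ is then the set $\{\Lambda_\bullet : A(\Lambda_\bullet) = A\}$; local closedness follows from the usual $t$-adic semi-continuity, since ``$A(\Lambda_\bullet) \supseteq A$'' is closed and the equality cuts out an open subscheme inside this closed locus.

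To show $\cl(A)$ is finite and smooth over an affine space, I would, for each $\xi \in A$, pick a normalized lattice vector $v_\xi \in \Lambda_\bullet$ with leading term $e_\xi$ and parametrize the remaining coefficients of $v_\xi$ in the basis. The minuscule condition then cuts out affine-linear equations on these coefficients, while the discrete normalization choices at each $\xi \in A$ account for the finite étale part of the cover. The main obstacle, in my view, is the simultaneous bookkeeping for the $d$ coupled lattices: unlike the superbasic $\GL_n$ case treated in \cite{dJO, V, H}, here the semi-module must encode data across all $d$ slots and the $\sigma$-twist mixes them nontrivially. Getting the combinatorial definition right, so that the resulting ``normal form'' of a lattice in each stratum is manifestly an affine bundle and not merely locally so, is the core technical point; once this is set up, the affine parametrization, finiteness and smoothness will follow, and one can cross-check dimensions against the global formula $\dim X_\mu^G(\gamma) = \langle \rho_G, \mu - \nu_G(\gamma)\rangle - \tfrac{1}{2}\dft_G(\gamma)$.
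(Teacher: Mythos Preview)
Your overall strategy---assigning to each lattice tuple its semi-module via leading-term indices and taking the fibers as strata---is exactly what the paper does, and your definition of $\cl(A)$ as $\{\Lambda_\bullet : A(\Lambda_\bullet)=A\}$ agrees with the paper's (once one uses Lemma~\ref{lattice} to see that the extra condition $\varphi_\Lambda=\varphi_{A(\Lambda)}$ is automatic for $\Lambda\in X_\mu(\gamma)$). The bookkeeping across the $d$ slots, which you flag as the main obstacle, is dispatched in the paper simply by working on the index set $O=\ZZ_d\times\ZZ$ with the shift $f$; it is not where the difficulty lies.

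The genuine gap is in your account of why each stratum is finite and smooth over an affine space. The defining equations are \emph{not} affine-linear, and the finiteness does \emph{not} come from ``discrete normalization choices.'' In the paper the normalized basis $(v(b))_{b\in\bar A}$ is built recursively by
\[
v(b)=t^{-\varphi_A(r_A^{-1}(b))}\,\gamma\sigma\bigl(v(r_A^{-1}(b))\bigr)+\sum_{(b,j)\in V(A)} x_{b,j}\,v(b+j),
\]
so each coefficient $\alpha_{b,j}$ is a polynomial in earlier coefficients \emph{and their $q$-th powers} (Lemma~\ref{exist}). The parameters split as $D(A,\iota)=V(A)\sqcup W(A,\iota)$: the $V(A)$-coordinates are free, while the closing-up condition (4')---that traversing an $r_A$-cycle of length $s=n'd$ and applying $\gamma\sigma$ lands back in the lattice---forces each $W$-coordinate to satisfy an Artin--Schreier-type equation
\[
X_{b,j}^{q^s}-X_{b,j}+\delta_{b,j}=0,
\]
with $\delta_{b,j}$ depending only on strictly $\preceq_\iota$-smaller variables. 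It is the triangular Jacobian of this Artin--Schreier system that makes the projection $\cl(A)\to\AA^{V(A)}$ finite and smooth (Lemma~\ref{finite}); the uniqueness of the normalized basis (Lemma~\ref{unique}) then identifies this with the stratum. Without the cycle/Frobenius mechanism your parametrization would not yield the finite-over-affine structure, and in particular no ``normalization choices'' produce it: once the leading terms are fixed to $1$ the basis is unique.
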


\

Now we turn to the study of irreducible components of $X_\mu^G(\g)$. In \cite[\S 2.1]{HV}, Hamacher and Viehmann defined the ``best integral approximation" $\l_G(\g)$ of the Newton point $\nu_G(\g)$, which is the unique maximal element in the set $$\{\l \in X_*(T)_\s; \k_G(t^\l)=\k_G(\g), \l^\diamond \le \nu_G(\g)\}.$$ Moreover, if $M \supseteq T$ is a standard Levi subgroup such that $\g \in M(L)$ and $\nu_M(\g)=\nu_G(\g)$, then $\l_M(\g)=\l_G(\g)$.

Let $\hat S \subseteq \hat T \subseteq \hat B \subseteq \hat G$ be the dual of $S \subseteq T \subseteq B \subseteq G$ in the sense of Deligne and Lusztig. We have canonical identifications $X_*(T)=X^*(\hat T)$, $X_*(T)_{G-\dom}=X^*(\hat T)_{\hat G-\dom}$ and so on. For $\mu \in X_*(T)_{G-\dom}$ let $V_\mu^{\hat G}$ denote the irreducible $\hat G$-module with highest weight $\mu$. Moreover, for $\l \in X^*(\hat S)$, the $\l$-weight space of $V_\mu^{\hat G}$ is denoted by $V_\mu^{\hat G}(\l)$.

\begin{conj} [Chen, Zhu] \label{conj}
Let notations be as above. Then there exists a natural bijection between $\JJ_\g^G \backslash \Irr X_\mu^G(\g)$ and a basis of $V_\mu^{\hat G}(\l_G(\g))$ related to the Mirkovic-Vilonen cycles (see \cite{MV}). In particular, $$|\JJ_\g^G \backslash \Irr X_\mu^G(\g)| = \dim V_\mu^{\hat G}(\l_G(\g)).$$ Here $\Irr X_\mu^G(\g)$ denotes the set of irreducible components of $X_\mu^G(\g)$.
\end{conj}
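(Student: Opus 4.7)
The strategy is to apply the semi-module stratification established in Corollary \ref{dec} and reduce Conjecture \ref{conj} to a combinatorial identity involving weight multiplicities for $\hat G = \GL_n^d$. From the decomposition $X_\mu^G(\g) = \sqcup_A \cl(A)$ with each $\cl(A)$ smooth and finite over an affine space, the irreducible components of $X_\mu^G(\g)$ arise as the closures of the connected components of those strata $\cl(A)$ whose dimension attains the total dimension
\[
\dim X_\mu^G(\g) = \<\rho_G, \mu-\nu_G(\g)\> - \tfrac{1}{2}\dft_G(\g).
\]
The first task is therefore to extract from the construction of $\cl(A)$ an explicit combinatorial dimension formula $\dim \cl(A) = d(A)$ in terms of the numerical data of the semi-module $A$, paralleling the formulas of de Jong--Oort, Viehmann and Hamacher in the split/superbasic setting, and to characterize the subset $\cs_\mu^{\rm top}$ of those $A$ with $d(A)$ maximal.

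Next I would analyze the action of $\JJ_\g^G$ on $\cs_\mu^{\rm top}$. Since $\g$ is basic, $\JJ_\g^G$ is an inner form of $G$, and $\JJ_\g^G(F)$ acts through its quotient by a compact subgroup on the semi-module combinatorics essentially by lattice translations. This yields an explicit, tractable description of $\JJ_\g^G \backslash \cs_\mu^{\rm top}$. A separate, smaller point is to check that the finite fibre of $\cl(A) \to \AA^{d(A)}$ is connected (or to incorporate its connected components into the count); I expect this to reduce to a $\GL_n$-fact since $G$ is a Weil restriction.

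On the representation-theoretic side, the identification $\hat G = \GL_n^d$ and the factorization $\mu = (\mu_0,\dots,\mu_{d-1})$ give
\[
\dim V_\mu^{\hat G}(\l_G(\g)) = \sum_{(\eta_i):\; \sum_i \eta_i = \l_G(\g)} \prod_{i=0}^{d-1} \dim V_{\mu_i}^{\GL_n}(\eta_i),
\]
where the sum is taken in $X_*(T)_\s = X^*(\hat S)$. Because each $\mu_i$ is minuscule, each factor is $0$ or $1$, so the MV-cycle basis of $V_\mu^{\hat G}(\l_G(\g))$ is indexed by tuples $(\eta_i)$ with $\eta_i$ in the Weyl orbit of $\mu_i$ and $\sum_i \eta_i = \l_G(\g)$. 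I would then construct a natural bijection between this index set and $\JJ_\g^G \backslash \cs_\mu^{\rm top}$ by reading off the tuple of $\GL_n$-weights from the ``slopes'' of the semi-module $A$ at each of the $d$ components of $G$, and checking that the maximality of $d(A)$ translates precisely into the weight constraint $\sum_i \eta_i = \l_G(\g)$.

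The main obstacle will not be the counting itself but showing that the resulting bijection is \emph{natural} in the sense of Chen--Zhu: one must identify the closure $\overline{\cl(A)} \subseteq X_\mu^G(\g)$ of a top-dimensional stratum with the (global-to-local image of the) corresponding MV cycle in the affine Grassmannian of $G$. For this I would specialize the family producing $\cl(A)$ to the boundary of the chamber and match the resulting limits with the combinatorics of MV polytopes for $\Res_{\FF_{q^d}/\FF_q}\GL_n$. The minusculity of $\mu$ should make each $V_{\mu_i}^{\GL_n}(\eta_i)$ one-dimensional on its support, removing ambiguity in the identification and reducing the verification to a dimension count compatible with Mirkovic--Vilonen \cite{MV}.
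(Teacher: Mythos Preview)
Your outline captures the right starting point (Corollary~\ref{dec}) but underestimates the work needed in two places, and the approach diverges from the paper's in a way that leaves a genuine gap.

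First, the $\JJ_\g$-action on top-dimensional strata is not adequately described as ``lattice translations.'' In the paper, $\JJ_\g$ is decomposed via Bruhat as $(\JJ_\g \cap I)\,\fs_\g\,\Omega_\g\,(\JJ_\g \cap I)$, and each factor must be handled separately. The translation group $\Omega_\g$ does act simply on the semi-module combinatorics, but the symmetric-group piece $\fs_\g \cong S_h$ is delicate: Lemma~\ref{fix} shows that $\fs_\g$ preserves $\Irr\cl(A)$ only after one has restricted to \emph{rigid} semi-modules (ordered, with $\bar A^k + h \nleq \bar A^{k+1}$), and the argument there requires a non-obvious case split on whether $\bar A^{k+1} = 1 + \bar A^k$. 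Passing from an arbitrary top stratum to a rigid one (Corollary~\ref{to-rig}) already consumes Lemmas~\ref{order}, \ref{piece}, \ref{tight}, which use specific unipotent elements $n_{\iota,k,\xi} \in \JJ_\g$ to move between components of a stratum. Without the rigidity notion your quotient ``$\JJ_\g \backslash \cs_\mu^{\rm top}$'' is not combinatorially well-defined.

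Second, the strata $\cl(A)$ are \emph{not} connected: the finite \'etale cover $\cl(A,\iota) \to \AA^{V(A)}$ of Lemma~\ref{finite} has degree a power of $q^s$ and is typically disconnected, so your plan to ``check connectedness of the finite fibre'' would fail. The paper instead proves (Lemma~\ref{N-conj}) that all irreducible components of a top $\cl(A)$ are conjugate under $N(L) \cap \JJ_\g$, by embedding $\Irr\cl(A)$ into $\Irr\bigl(X_\mu(\g) \cap \b_\g^{-1}(\cl({}^\g A))\bigr)$ and invoking \cite[Proposition~5.6]{HV}. Combining this with the rigidity analysis gives $\JJ_\g \backslash \Irr X_\mu(\g) \cong \tilde\car_\mu^\tp$, which the paper then maps to $\sqcup_{\l \in I_{\mu,\g}} \tilde\cc_\l^\tp$ via $A \mapsto {}^\g A$ (Lemma~\ref{top}); the count is completed by the superbasic result \cite[Theorem~1.5]{HV} on the Levi $M$. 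Your direct weight-tuple count bypasses this Levi reduction and leaves the asserted equivalence ``$d(A)$ maximal $\Leftrightarrow$ $\sum_i \eta_i = \l_G(\g)$'' unjustified; in fact the correct translation of maximality (Lemma~\ref{top}) is ${}^\g A \in \cc_{\l_A}^\tp$, a superbasic condition on the Levi rather than a bare weight constraint.

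Finally, note that the paper establishes only the cardinality equality in Conjecture~\ref{conj}; the MV-cycle naturality you sketch in your last paragraph is not addressed in the paper and remains open there.
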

If $\mu$ is minuscule and $\g$ is superbasic, the conjecture is proved by Hamacher and Viehmann \cite{HV}. If $\g$ is {\it unramified}, that is, $\dft_G(\g)=0$, it is proved by Xiao and Zhu \cite{XZ}. In both cases, the authors obtained a complete description of $\Irr X_\mu^G(\g)$.

\begin{rmk}
 We mention that a complete description of $\Irr X_\mu^G(\g)$ was also known for the case where $G$ equals $\GL_n$ or $\GSp_{2n}$ and $\mu$ is minuscule (cf. \cite{V1}, \cite{V2}), and for the case where $(G, \mu)$ is {\it fully Hodge-Newton decomposable}; see \cite{VW} for a typical example in this case.
\end{rmk}

\begin{thm} (=Corollary \ref{main''}) \label{main}
Conjecture \ref{conj} holds for $G=\Res_{\FF_{q^d} / \FF_q} \GL_n$.
\end{thm}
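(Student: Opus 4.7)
The plan is to use the stratification from Corollary \ref{dec} together with a Hodge-Newton reduction to the basic case, turning the conjecture into a concrete combinatorial identity in type $A$. Since Conjecture \ref{conj} is already known for $\g$ superbasic (Hamacher-Viehmann \cite{HV}) and for $\g$ unramified (Xiao-Zhu \cite{XZ}), I would first reduce the remaining cases to the situation where $\g$ is basic in $G$ and $\mu$ is minuscule. If $\nu_G(\g)$ is non-central, take $M$ to be the standard Levi centralizing $\nu_G(\g)$: for $G=\Res_{\FF_{q^d}/\FF_q}\GL_n$, such $M$ is again a product of Weil restrictions of general linear groups, and $\g$ is basic in $M$ with $\l_M(\g)=\l_G(\g)$. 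Hodge-Newton decomposition should then give a $\JJ_\g^G$-equivariant identification of $\Irr X_\mu^G(\g)$ with $\JJ_\g^G\times^{\JJ_\g^M}\Irr X_{\mu_M}^M(\g)$, while $V_\mu^{\hat G}(\l_G(\g))$ matches $V_{\mu_M}^{\hat M}(\l_M(\g))$ by branching in type $A$. Induction on rank reduces the problem to basic $\g$.

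For $\g$ basic and $\mu$ minuscule, Corollary \ref{dec} gives $X_\mu^G(\g)=\sqcup_A\cl(A)$ with each $\cl(A)$ smooth of explicit dimension over an affine space. Consequently the irreducible components of $X_\mu^G(\g)$ are precisely the closures $\overline{\cl(A)}$ indexed by those semi-modules $A$ attaining the expected dimension $\<\rho_G,\mu-\nu_G(\g)\>-\tfrac{1}{2}\dft_G(\g)$; denote this finite set by $\cs^{\rm top}$. The $\JJ_\g^G$-action on $X_\mu^G(\g)$ permutes the strata, inducing an action on $\cs^{\rm top}$ that I would compute explicitly from the combinatorial description of semi-modules (tuples of integer sequences indexed by residues modulo $d$, corresponding to the factors of the Weil restriction).

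The final and main step is the construction of a $\JJ_\g^G$-equivariant bijection between $\cs^{\rm top}$ and the Mirkovic-Vilonen basis of $V_\mu^{\hat G}(\l_G(\g))$. For $\hat G$ a product of general linear groups with minuscule $\mu$, the MV basis is indexed by explicit combinatorial data, for instance tuples of semi-standard Young tableaux of prescribed shape and content determined by $\mu$ and $\l_G(\g)$. I would read off, from each top-dimensional semi-module $A$, an associated tableau datum, and verify bijectivity by combining the dimension formula from Corollary \ref{dec} with a Kostant-style weight-multiplicity identity in type $A$. The main obstacle will be precisely this matching: on one side the enumeration is geometric (top-dimensional strata in the semi-module stratification), on the other it is representation-theoretic (MV cycles), and reconciling them is where the real content lies. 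Equivariance with respect to $\JJ_\g^G$ should then reduce to checking that the combinatorial encoding intertwines the lattice-automorphism action on semi-modules with the natural action on the MV basis, which can be verified by direct inspection.
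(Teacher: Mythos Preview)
Your outline has the right large-scale shape---reduce to basic $\g$, stratify by semi-modules, then match with the representation-theoretic side---but two of the steps you treat as routine are in fact false as stated, and a third is simply missing.

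First, you never say how to pass from arbitrary $\mu$ to minuscule $\mu$. Knowing the conjecture for superbasic or unramified $\g$ does not do this for you. The paper handles it by writing $\mu$ as a sum $|\mu_\bullet|$ of minuscule cocharacters, using the convolution morphism $m_{\mu_\bullet}$ and geometric Satake (Theorem~\ref{sat}, Lemma~\ref{compare}) to relate $|\JJ_\g\backslash\Irr X_\mu^G(\g)|$ to $|\JJ_\g\backslash\Irr X_{\mu_\bullet}^H(\g)|$ and lower terms, and then inducting on $\mu$ (Corollary~\ref{gl}). Without such an argument your reduction is incomplete.

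Second, and more seriously, in the basic minuscule case your description of the components is wrong on two counts. The strata $\cl(A)$ are not in general irreducible: Corollary~\ref{dec} only says $\cl(A)$ is finite and smooth over $\AA^{V(A)}$, so $\cl(A)$ is typically a disjoint union of several copies of affine space, and $\Irr X_\mu(\g)=\sqcup_{A\in\ca_\mu^\tp}\Irr\cl(A)$ is not indexed by $\ca_\mu^\tp$ alone. Moreover, $\JJ_\g$ does \emph{not} permute the strata: the semi-module $A(\L)$ is defined via the standard basis, and a generic $j\in\JJ_\g$ does not send $\cl(A)$ into any single $\cl(A')$. The paper addresses both issues by introducing \emph{rigid} semi-modules (\S\ref{rigid}): it shows that every component is $\JJ_\g$-conjugate to one in $\cl(A)$ for some rigid $A$ (Corollary~\ref{to-rig}), that all components of a rigid $\cl(A)$ are $N(L)\cap\JJ_\g$-conjugate (Lemma~\ref{N-conj}), and that the permutation subgroup $\fs_\g\subseteq\JJ_\g$ preserves $\Irr\cl(A)$ for rigid $A$ (Lemma~\ref{fix}), so that $\JJ_\g\backslash\Irr X_\mu(\g)$ is parametrized by $\tilde\car_\mu^\tp$. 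This count is then evaluated not by a direct tableau bijection but by passing, via $A\mapsto{}^\g A$ (Lemma~\ref{top}), to a Levi $M\subset H$ in which $\g$ becomes \emph{superbasic}, and invoking the Hamacher--Viehmann result there. Your proposed direct matching with MV data would have to reproduce all of this structure, and as you yourself acknowledge, that is exactly where the content lies.
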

To prove the theorem, we first use geometric Satake to reduce the problem to the case where $\mu$ is minuscule and $\g$ is basic. Then we can use Theorem 1.1 to show that the $\JJ_\g^G$-orbits of $\Irr X_\mu^G(\g)$ are parameterized by equivalence classes of {\it rigid semi-modules} $A$ (see \S\ref{rigid}) such that $\dim \cl(A) = \dim X_\mu^G(\g)$. Finally, we show the number of such rigid semi-modules coincides with the dimension of $V_\mu^{\hat G}(\l_G(\g))$. This is accomplished by a reduction to the superbasic case, which has been solved by Hamacher and Viehmann \cite[Theorem 1.5]{HV}.

\subsection*{Acknowledgements} The author is grateful to Viehmann for detailed explanations on her joint work \cite{HV} with Hamacher, which plays an essential role in this note. He also would like to thank Chen and G\"{o}rtz for helpful comments.

\section{Reduction to the minuscule case} \label{reduce}
In this section, we reduce Theorem \ref{main} to the minuscule case. Let $G$ be a split reductive group over $\FF_q$. Let $d \in \ZZ_{\ge 1}$ and let $H=H_d$ be a reductive group over $\kk$ such that $$H=\prod_{\t \in \ZZ_d} H_\t,$$ where $\ZZ_d=\ZZ / d\ZZ$ for some $d \in \ZZ_{\ge 1}$ and $H_\t = G \otimes_{\FF_q} \kk$ for $\t \in \ZZ_d$. Let $\s^H$ be an automorphism of $H(L)$ defined by $(g_1, g_2, \dots, g_d) \mapsto (g_2, \dots, g_r, \s(g_1))$, where $\s$ denotes the Frobenius automorphism $\s$ on $G(L)$.

Let $T$ be a split maximal torus of $G$. Then $T_H :=\prod_\t T$ is a maximal torus of $H$. Via diagonal embedding, we can identify $T$ with the maximal $\s^H$-split torus $S_H$ in $T_H$. The embedding $\l \mapsto \l^H=(0, \dots, 0, \l)$ identifies $X_*(T)_\s=X_*(T)$ with $X_*(T_H)_{\s^H} = X^*(\hat S_H)$. Similarly, the embedding $\g \mapsto \g^H=(1, \dots, 1, \g)$ gives a bijection between $\s$-conjugacy classes of $G(L)$ and $\s^H$-conjugacy classes of $H(L)$. By abuse of notation, we will identify $\g$ (resp. $\l$) with $\g^H$ (resp. $\l^H$).
\begin{lem} \label{approximation}
Let $\g \in G(L)$. Then $\l_H(\g)=\l_G(\g) \in X_*(T) = X^*(\hat S_H)$.
\end{lem}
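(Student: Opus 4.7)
The plan is to match, under the identification $X_*(T) = X_*(T_H)_{\s^H}$ spelled out just before the lemma, the defining set of $\l_G(\g)$ with that of $\l_H(\g^H)$ as a poset, and conclude that the unique maximal elements agree. Concretely, since each of $\l_G(\g)$ and $\l_H(\g^H)$ is characterized as the unique maximum of a set of the form $\{\l; \k(t^\l)=\k(\g),\ \l^\diamond \le \nu(\g)\}$ for the respective group, I need only verify three correspondences: Newton points, Kottwitz points, and the partial order $\le$.

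First, using the cyclic-shift definition of $\s^H$ and the identity $(\s^H)^d = \s$ (diagonal), a direct computation gives
$$\g^H \cdot \s^H(\g^H) \cdot (\s^H)^2(\g^H) \cdots (\s^H)^{d-1}(\g^H) = (\g, \g, \dots, \g) \in H(L),$$
and iterating this shows that $\nu_H(\g^H) = \tfrac{1}{d}(\nu_G(\g), \dots, \nu_G(\g))$ in $X_*(T_H)_\QQ$, which corresponds to $\nu_G(\g) \in X_*(T)_\QQ$ under the sum-to-coinvariants identification $\l \mapsto \l^H$. The $\s^H$-average $(\l^H)^\diamond = \tfrac{1}{d}(\l, \dots, \l)$ likewise maps to $\l = \l^\diamond$. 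Second, since the Kottwitz invariant is componentwise multiplicative and $\pi_1(H)_{\s^H} \cong \pi_1(G)$ via summation of coordinates, one immediately obtains $\k_H(\g^H) = \k_G(\g)$ and $\k_H(t^{\l^H}) = \k_G(t^\l)$ for every $\l \in X_*(T)$. Third, each positive coroot $\a^\vee$ of $G$ yields $d$ positive coroots $(0, \dots, \a^\vee, \dots, 0)$ of $H$ forming a single $\s^H$-orbit, each of whose members maps to $\a^\vee$ under the paper's identification; hence the nonnegative coroot cones match, and $\le$ transports. Assembling these three, the two sets defining $\l_G(\g)$ and $\l_H(\g^H)$ are identified as posets, so their unique maxima agree.

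The main obstacle is purely bookkeeping: the identification $\l \mapsto (0, \dots, 0, \l)$ is not the diagonal $v \mapsto (v, \dots, v)$, so one must carefully track the factor of $d$ that appears in the literal Newton vector $\tfrac{1}{d}(\nu_G(\g), \dots, \nu_G(\g)) \in X_*(T_H)_\QQ$ versus its image $\nu_G(\g)$ in the coinvariants, and the same factor reappears when averaging the $\s^H$-orbits of positive coroots. Once these normalizations are handled consistently, the argument is otherwise formal.
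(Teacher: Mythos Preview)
The paper states this lemma without proof, treating it as an immediate consequence of the definitions. Your argument is correct and supplies precisely the routine verification the paper omits: you identify the two defining sets as posets by matching Newton points, Kottwitz points, and the coroot partial order under the embedding $\l \mapsto \l^H$, and conclude that the unique maxima coincide. Your care with the factor of $d$---arising because the section $\l \mapsto (0,\dots,0,\l)$ into $X_*(T_H)$ differs from the diagonal, while both $(\l^H)^\diamond$ and $\nu_H(\g^H)$ land on the diagonal $\tfrac{1}{d}(\cdot,\dots,\cdot)$---is the only genuine bookkeeping, and you handle it correctly.
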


Let $\mu_\bullet=(\mu_1, \dots, \mu_d) \in X_*(T_H)_{H-\dom} = \prod_\t X_*(T)_{G-\dom}$. We consider the twisted product $$\cz_{\mu_\bullet}=K t^{\mu_1} K \times_K \cdots \times_K K t^{\mu_d} K / K$$ together with the convolution map $$m_{\mu_\bullet}: \cz_{\mu_{\bullet}} \to \overline{K t^{|\mu_\bullet|} K / K} = \cup_{\l \leq |\mu_\bullet|} K t^\l K / K$$ given by $(g_1, \dots, g_{d-1}, g_d K) \mapsto g_1 \cdots g_d K$, where $|\mu_\bullet|=\mu_1 + \cdots + \mu_d$.

Consider the following decomposition of $\hat G$-modules $$V_{\mu_\bullet}^{\hat H}=V_{\mu_1}^{\hat G} \otimes \cdots \otimes V_{\mu_d}^{\hat G} = \bigoplus_{\chi \in X_*(T)_{G-\dom}} (V_\chi^{\hat G})^{\oplus a_{\mu_{\bullet}}^\chi},$$ where $a_{\mu_\bullet}^\chi$ is the multiplicity of $V_\chi^{\hat G}$ in $V_{\mu_\bullet}^{\hat H}$ (as $\hat G$-modules).
\begin{thm} [\cite{MV}, \cite{NP}, \cite{Haines}] \label{sat}
Suppose $\mu_\bullet$ is a sum of dominant minuscule cocharacters. For each $y \in K t^\l K / K$ with $\l \leq |\mu_\bullet|$ dominant, the fiber $m_{\mu_\bullet}\i(y)$ is equi-dimensional of dimension $\<\rho_G, |\mu_\bullet|-\l\>$ if $m_{\mu_\bullet}\i(y) \neq \emptyset$. Moreover, the number of irreducible components of $m_{\mu_\bullet}\i(y)$ equals $a_{\mu_\bullet}^\l$.
\end{thm}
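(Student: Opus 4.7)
The plan is to prove Theorem \ref{sat} by combining the geometric Satake equivalence of Mirkovic-Vilonen with the Beilinson-Bernstein-Deligne decomposition theorem applied to the convolution morphism. Since each $\mu_i$ is minuscule, the Schubert variety $\overline{K t^{\mu_i} K/K}$ coincides with the single orbit $K t^{\mu_i} K/K$ and is a smooth projective homogeneous space of dimension $\<2\rho_G, \mu_i\>$. Consequently the twisted product $\cz_{\mu_\bullet}$ is smooth projective of dimension $\<2\rho_G, |\mu_\bullet|\>$, and the convolution morphism $m_{\mu_\bullet}$ is proper.

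First I would establish the dimension bound $\dim m_{\mu_\bullet}\i(y) \le \<\rho_G, |\mu_\bullet|-\l\>$ for every $y \in K t^\l K/K$, by induction on the number of factors $d$, iterating the natural projection $\cz_{\mu_\bullet} \to \cz_{(\mu_1, \dots, \mu_{d-1})}$ whose fibers are copies of $K t^{\mu_d} K/K$. A dimension count using the estimates of Mirkovic-Vilonen on intersections of Schubert cells with semi-infinite orbits then yields the inequality. This is precisely the assertion that $m_{\mu_\bullet}$ is semi-small with respect to the Schubert stratification of its image.

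Next, I would invoke the decomposition theorem. Because $\cz_{\mu_\bullet}$ is smooth and $m_{\mu_\bullet}$ is proper and semi-small, the pushforward $R (m_{\mu_\bullet})_{*}\, \underline{\QQ}_\ell [\dim \cz_{\mu_\bullet}]$ decomposes as $\bigoplus_\l \mathrm{IC}(\overline{K t^\l K/K})^{\oplus b_\l}$, where the multiplicity $b_\l$ counts the number of top-dimensional irreducible components of a fiber over any point of $K t^\l K/K$. By geometric Satake \cite{MV}, the left hand side corresponds as a $\hat G$-module to $V_{\mu_1}^{\hat G} \otimes \cdots \otimes V_{\mu_d}^{\hat G}$, while each summand $\mathrm{IC}(\overline{K t^\l K/K})$ corresponds to $V_\l^{\hat G}$. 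Matching multiplicities gives $b_\l = a_{\mu_\bullet}^\l$, which supplies both the count of top-dimensional components and the fact that their common dimension is $\<\rho_G, |\mu_\bullet|-\l\>$.

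The main obstacle is the equi-dimensionality statement: one must show that every irreducible component of $m_{\mu_\bullet}\i(y)$, not merely the top-dimensional ones, has dimension $\<\rho_G, |\mu_\bullet|-\l\>$. Here I would follow the approach of \cite{MV} and \cite{NP}: stratify each fiber by intersecting with the semi-infinite orbits associated to a regular cocharacter of $T$, identify the resulting strata inductively with iterated Mirkovic-Vilonen cycles inside the convolution diagram, and argue by induction on $d$ that these cycles are pure of the expected dimension. Together with the multiplicity count furnished by geometric Satake, this yields the full statement.
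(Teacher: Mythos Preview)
The paper does not give its own proof of this theorem: it is stated with attributions to \cite{MV}, \cite{NP}, \cite{Haines} and then used as a black box. So there is no ``paper's proof'' to compare against; you are sketching the argument behind a cited result.

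Your outline is essentially the standard route through these references and is sound. Semi-smallness of the convolution morphism when the factors are minuscule, the decomposition theorem identifying the multiplicities $b_\l$ with the number of top-dimensional components, and the geometric Satake identification $b_\l = a_{\mu_\bullet}^\l$ are exactly the ingredients. The one place to be careful is the equi-dimensionality: as you note, semi-smallness plus the decomposition theorem only controls the top-dimensional components. The full equi-dimensionality (no components of smaller dimension) is the point of Haines' paper \cite{Haines}, and relies on a paving argument rather than just the MV-cycle stratification; your inductive sketch via semi-infinite orbits is the right shape, but in practice one needs the specific pavings of \cite{NP} or the argument of \cite{Haines} to conclude. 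If you want a self-contained write-up, that is the step requiring the most care.
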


For $\g \in G(L)$ and $\mu_\bullet \in X_*(T_H)_{H-\dom}$, we define $$X_{\mu_\bullet}^H(\g)=\{h K_H \in H(L)/K_H; h\i \g \s^H(h) \in K_H t^{\mu_\bullet} K_H\},$$ where $K_H=\prod_\t K$ and $K=G(\co)$. Thanks to Zhu \cite[\S 3.13]{Z}, there is a Catesian square \begin{align*} \xymatrix{
  X_{\mu_\bullet}^H(\g) \ar[d]^\a \ar[r] & G(L) \times_K \cz_{\mu_\bullet}  \ar[d]^{\Id \times_K m_{\mu_\bullet}} \\
\cup_{\l \leq |\mu_\bullet|} X_\l^G(\g) \ar[r] & G(L) \times_K \overline{K t^{|\mu_\bullet|} K / K},  }\end{align*} where the bottom horizontal map is given by $g K \mapsto (g, g\i \g \s(g) K)$, and the top horizontal map is given by $(g_1K, \dots, g_d K) \mapsto (g_1, g_1\i g_2, \dots, g_{d-1}\i g_d, g_d\i \g \s(g_1) K)$. Moreover, via the identification $$\JJ_\g^G \cong \JJ_{\g}^H := \{h \in H(L); h\i \g \s^H(h) = \g\}, \quad g \mapsto (g, \dots, g),$$ the above Catesian square is $\JJ_\g^G$-equivariant (by left multiplication).
\begin{lem} \label{compare}
Let notations be as above. Suppose $\mu_\bullet$ is sum of minuscule dominant coweights. Then $$\Irr X_{\mu_\bullet}^H(\g) = \sqcup_{\l \leq |\mu_\bullet|, \ a_{\mu_\bullet}^\l \neq 0} \sqcup_{\bc \in \Irr X_\l^G(\g)} \Irr (\a\i(\bc)).$$ In particular, $$\JJ_{\g}^H \backslash \Irr X_{\mu_\bullet}^H(\g) = \sqcup_{\l \leq |\mu_\bullet|, \ a_{\mu_\bullet}^\l \neq 0} \sqcup_{\bc \in \JJ_\g^G \backslash \Irr X_\l^G(\g)} \Irr (\a\i(\bc))$$ and hence $$|\JJ_{\g}^H \backslash \Irr X_{\mu_\bullet}^H(\g)| = \sum_{\l \leq |\mu_\bullet|} a_{\mu_\bullet}^\l |\JJ_\g^G \backslash \Irr X_\l^G(\g)|.$$ Here $\l$ always denotes a dominant cocharacter.
\end{lem}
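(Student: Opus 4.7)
My plan is to use the Cartesian square together with Theorem~\ref{sat} to analyze $X_{\mu_\bullet}^H(\g)$ stratum-by-stratum under $\alpha$, then identify its irreducible components by a dimension comparison. First I stratify $X_{\mu_\bullet}^H(\g)=\sqcup_\l \alpha^{-1}(X_\l^G(\g))$ over dominant $\l\leq|\mu_\bullet|$. The Cartesian square identifies the fiber of $\alpha$ over $gK\in X_\l^G(\g)$ with $m_{\mu_\bullet}^{-1}(g\i\g\s(g)K)$, which by Theorem~\ref{sat} is equi-dimensional of dimension $\<\rho_G,|\mu_\bullet|-\l\>$ with exactly $a_{\mu_\bullet}^\l$ irreducible components; the strata with $a_{\mu_\bullet}^\l=0$ are therefore empty. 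Combining with the dimension formula for $X_\l^G(\g)$ gives
\[
\dim\alpha^{-1}(X_\l^G(\g))=\<\rho_G,|\mu_\bullet|-\nu_G(\g)\>-\tfrac12\dft_G(\g),
\]
independent of $\l$. A direct check using $\nu_H(\g)=\nu_G(\g)$, $\dft_H(\g)=\dft_G(\g)$ and $\<\rho_H,\mu_\bullet\>=\<\rho_G,|\mu_\bullet|\>$ shows this equals $\dim X_{\mu_\bullet}^H(\g)$, so each stratum is of maximal dimension.

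Since $X_{\mu_\bullet}^H(\g)$ is equi-dimensional and every stratum is locally closed of the same dimension, the generic point of each irreducible component $Z$ of $X_{\mu_\bullet}^H(\g)$ lies in a unique stratum, and $Z$ is the closure of a unique component of that stratum; hence $\Irr X_{\mu_\bullet}^H(\g)=\sqcup_\l\Irr(\alpha^{-1}(X_\l^G(\g)))$. Within each stratum the equi-dimensionality of the fibers of $\alpha$ forces the image $\alpha(W)$ of any component $W$ of $\alpha^{-1}(X_\l^G(\g))$ to be dense in a unique component $\bc\in\Irr X_\l^G(\g)$, so that $W$ is in fact a component of $\alpha^{-1}(\bc)$; this gives $\Irr(\alpha^{-1}(X_\l^G(\g)))=\sqcup_{\bc\in\Irr X_\l^G(\g)}\Irr(\alpha^{-1}(\bc))$. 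Combining the two decompositions yields the first displayed equation of the lemma, and the $\JJ_\g^G\cong\JJ_\g^H$-equivariance built into the Cartesian square delivers the second.

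For the cardinality formula it remains to show that $|\Irr(\alpha^{-1}(\bc))|=a_{\mu_\bullet}^\l$ for each $\bc\in\Irr X_\l^G(\g)$ and that the $\JJ_\g^G$-stabilizer of $\bc$ acts trivially on this set. By Theorem~\ref{sat} together with the MV-cycle interpretation, $m_{\mu_\bullet}^{-1}(Kt^\l K/K)$ has exactly $a_{\mu_\bullet}^\l$ global irreducible components, each surjecting onto $Kt^\l K/K$ with geometrically irreducible generic fiber; pulling back along the irreducible map $\bc\to Kt^\l K/K$ given by $gK\mapsto g\i\g\s(g)K$ within the Cartesian square therefore preserves the component count, while the $G(L)$-equivariance of the canonical MV labeling forces the stabilizer action to be trivial. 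The subtle point, and the only real obstacle, is verifying that each global MV component has geometrically irreducible generic fiber, so that the pullback to the irreducible base $\bc$ really does preserve the count; this is essentially the content of geometric Satake behind Theorem~\ref{sat}, but needs to be extracted explicitly for our Cartesian-square setup.
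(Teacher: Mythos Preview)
Your approach is essentially the same as the paper's: both use the Cartesian square together with Theorem~\ref{sat} to show that each $\alpha^{-1}(\bc)$ is equi-dimensional of dimension $\dim X_{\mu_\bullet}^H(\g)$, which immediately gives the first decomposition. The paper's proof in fact stops at that dimension computation and declares the argument finished; the extra care you take with the component count $|\Irr(\alpha^{-1}(\bc))|=a_{\mu_\bullet}^\l$ and with the triviality of the stabilizer action (via the $G(L)$-equivariance of the convolution picture, so that $\JJ_\g$ acts only on the base and not on the fiber labels) fills in exactly what the paper leaves implicit.
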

\begin{proof}
Let $\bc \in \Irr X_\l^G(\g)$. If $\a\i(X_\l^G(\g)) \neq \emptyset$, that is, $m_{\mu_\bullet}\i (t^\l K) \neq \emptyset$, or equivalently, $a_{\mu_\bullet}^\l \neq 0$, then Theorem \ref{sat} tells that $\a\i(\bc)$ is equi-dimensional and $$\dim \a\i(\bc) = \dim \bc + \<\rho_G, |\mu_\bullet|-\l\> = \dim  X_\l^G(\g) + \<\rho_G, |\mu_\bullet|-\l\> = \dim X_{\mu_\bullet}^H(\g).$$ The proof is finished.
\end{proof}

\begin{prop} \label{minu-basic}
Suppose $G=\GL_n$. If $\g$ is basic and $\mu_\bullet \in X_*(T_H)_{H-\dom}$ is minuscule. Then $|\JJ_{\g}^H \backslash \Irr X_{\mu_\bullet}^H(\g)| =  \dim V_{\mu_\bullet}^{\hat H}(\l_H(\g))$.
\end{prop}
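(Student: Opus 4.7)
The plan is to combine the semi-module stratification of Theorem~1.1 with a Levi reduction to the superbasic case, where the analogous counting is already known by Hamacher-Viehmann \cite[Theorem~1.5]{HV}. After identifying $H$ with the Weil restriction $\Res_{\FF_{q^d}/\FF_q}\GL_n$ as in the setup preceding the statement, I would apply Theorem~1.1 to obtain a decomposition $X_{\mu_\bullet}^H(\g)=\sqcup_A \cl(A)$ indexed by semi-modules $A$ of Hodge type $\mu_\bullet$. Each $\cl(A)$ is smooth over an affine space, hence irreducible; combined with the equidimensionality of $X_{\mu_\bullet}^H(\g)$ recalled in the introduction, the irreducible components are exactly the closures of those $\cl(A)$ of maximal dimension. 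Call such $A$ \emph{rigid} (\S\ref{rigid}). Since the action of $\JJ_\g^H$ descends to an equivalence relation on rigid semi-modules, $\JJ_\g^H\backslash\Irr X_{\mu_\bullet}^H(\g)$ is in bijection with the set of equivalence classes of rigid semi-modules.

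Next I would reduce the count to the superbasic case. Because $\g\in\GL_n(L)$ is basic, one can choose a suitable standard Levi $M\subseteq H$ containing $\g$ such that $\g$ is superbasic in $M$ and $\nu_M(\g)=\nu_H(\g)$; the remark preceding Lemma~\ref{approximation} then yields $\l_M(\g)=\l_H(\g)$. The combinatorial heart of the argument is to show that equivalence classes of rigid semi-modules for $(H,\mu_\bullet,\g)$ are parameterized, over $\hat M$-dominant cocharacters $\mu'_\bullet$, by equivalence classes of rigid semi-modules for $(M,\mu'_\bullet,\g)$ each counted with multiplicity $b_{\mu_\bullet}^{\mu'_\bullet}:=\dim\Hom_{\hat M}(V_{\mu'_\bullet}^{\hat M},V_{\mu_\bullet}^{\hat H})$. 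Granting this, \cite[Theorem~1.5]{HV} applied to the superbasic $\g\in M(L)$ identifies each factor with a basis of $V_{\mu'_\bullet}^{\hat M}(\l_M(\g))$, and summing via the branching rule together with $\l_M(\g)=\l_H(\g)$ yields
\begin{equation*}
|\JJ_\g^H\backslash\Irr X_{\mu_\bullet}^H(\g)| = \sum_{\mu'_\bullet} b_{\mu_\bullet}^{\mu'_\bullet}\dim V_{\mu'_\bullet}^{\hat M}(\l_M(\g)) = \dim V_{\mu_\bullet}^{\hat H}(\l_H(\g)).
\end{equation*}

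The main obstacle is the Levi decomposition of rigid semi-modules. One must decompose each semi-module $A$ along the block structure imposed by $M$, prove that rigidity for $(H,\mu_\bullet,\g)$ is equivalent to simultaneous rigidity of its $M$-components for the corresponding $\mu'_\bullet$, and reconcile the dimension formulas: the differences $\<\rho_H-\rho_M,\mu_\bullet-\nu_H(\g)\>$ and $\tfrac12(\dft_H(\g)-\dft_M(\g))$ must be absorbed into the extra combinatorial data recording the multiplicity $b_{\mu_\bullet}^{\mu'_\bullet}$. Carrying this out compatibly with the $\JJ_\g$-equivalence on both sides is where the technical work will lie.
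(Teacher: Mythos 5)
Your high-level strategy — semi-module stratification, then reduction to a superbasic Levi $M$, then apply \cite[Theorem~1.5]{HV} — is indeed the paper's strategy. But there are two substantive errors and one large unaddressed gap.

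First, the claim that each $\cl(A)$ is ``smooth over an affine space, hence irreducible'' is false. Theorem~1.1 gives that $\cl(A)\to\AA^{V(A)}$ is \emph{finite} and smooth, i.e.\ finite \'etale, and the explicit equations in Lemma~\ref{finite} are of Artin--Schreier type $X_{b,j}^{q^s}-X_{b,j}+\d_{b,j}=0$. Such covers of affine space in characteristic $p$ are typically disconnected with $q^s$ sheets per equation, so $\cl(A)$ is in general highly reducible. This is precisely why the paper needs Lemma~\ref{N-conj}: the different irreducible components of a single stratum $\cl(A)$ are related by the group $N(L)\cap\JJ_\g$, and establishing this requires the Iwasawa projection $\b_\g$ onto $M(L)/M(\co)$ and \cite[Proposition~5.6]{HV}.

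Second, you identify ``rigid'' with ``of maximal dimension,'' but these are independent conditions. Rigidity in \S\ref{rigid} is a combinatorial constraint on the residue classes of $\bar A$ modulo $h$ (namely $\bar A^k\le\bar A^{k+1}$ and $\bar A^k+h\nleqslant\bar A^{k+1}$), and $\car_\mu^\tp$ is the set of semi-modules satisfying \emph{both} rigidity \emph{and} top dimension. Many top-dimensional strata are not rigid; Corollary~\ref{to-rig} exists precisely to show any component can be moved into a rigid stratum by $\Omega_\g$ and the $\o_k$-translations. Without rigidity, the map $A\mapsto{}^\g A$ to $M$-semi-modules would not separate $\JJ_\g$-orbits (Lemma~\ref{equiv} relies on it), and the passage to $\tilde\cc_\l^\tp$ would overcount. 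Relatedly, $M$ in the paper is a \emph{semi-standard} Levi built from the residue classes mod $h=\gcd(m,n)$, not a standard Levi of $\GL_n$; the passage to $\l_H(\g)$ requires an explicit conjugation by some $z\in W_H$. Finally, your multiplicities $b_{\mu_\bullet}^{\mu'_\bullet}$ are all $0$ or $1$ here because $\mu_\bullet$ is minuscule, so the restriction to $\hat M$ is multiplicity-free — the paper uses this directly rather than a branching sum. The sentence ``Carrying this out compatibly with the $\JJ_\g$-equivalence \dots is where the technical work will lie'' is an accurate self-assessment: Lemmas~\ref{order}, \ref{piece}, \ref{tight}, \ref{fix}, \ref{top}, \ref{N-conj}, \ref{equiv}, and Corollary~\ref{to-rig} constitute that work, and none of it is done or even sketched here.
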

The proof is given in \S \ref{irr}; see Corollary \ref{main''}.

\begin{prop} \label{minu}
Suppose $G=\GL_n$ and $\mu_\bullet \in X_*(T_H)_{H-\dom}$ is minuscule. Then $$|\JJ_{\g}^H \backslash \Irr X_{\mu_\bullet}^H(\g)| = \dim V_{\mu_\bullet}^{\hat H}(\l_H(\g)).$$
\end{prop}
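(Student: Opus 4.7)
The plan is to reduce Proposition \ref{minu} to the basic case already handled in Proposition \ref{minu-basic}. Since $G=\GL_n$, after $\s^H$-conjugation we may assume $\g \in M(L)$ for a standard Levi subgroup $M \cong \prod_{i=1}^{k} \GL_{n_i}$ of $G$, chosen so that $\nu_M(\g) = \nu_G(\g)$ is central in $M$; in particular $\g$ is basic as an element of $M(L)$. Set $H_M = \prod_{\t \in \ZZ_d} M$, a standard Levi subgroup of $H$ containing $T_H$.

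First I would set up a Hodge-Newton-style decomposition of $X_{\mu_\bullet}^H(\g)$ relative to $H_M$. Since $\mu_\bullet$ is minuscule and $\nu_{H_M}(\g)$ is central in $H_M$, one expects a $\JJ_\g^H$-equivariant stratification of $X_{\mu_\bullet}^H(\g)$ whose strata are, up to smooth affine-space bundles coming from the unipotent radical of the relevant parabolic, the affine Deligne-Lusztig varieties $X_{\mu'_\bullet}^{H_M}(\g)$ inside $H_M(L)/K_{H_M}$, where $\mu'_\bullet$ ranges over the $H_M$-dominant coweights occurring in the restriction of $V_{\mu_\bullet}^{\hat H}$ from $\hat H$ to $\hat H_M$. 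Since $\mu_\bullet$ is minuscule, each such $\mu'_\bullet$ is automatically a tuple of minuscule coweights. A top-component count then yields
\[
|\JJ_\g^H \backslash \Irr X_{\mu_\bullet}^H(\g)| = \sum_{\mu'_\bullet} \bigl[V_{\mu_\bullet}^{\hat H} : V_{\mu'_\bullet}^{\hat H_M}\bigr] \cdot |\JJ_\g^{H_M} \backslash \Irr X_{\mu'_\bullet}^{H_M}(\g)|,
\]
parallel to Lemma \ref{compare} but with the Satake fiber count replaced by a branching multiplicity.

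Second, on the representation-theoretic side the same branching gives
\[
\dim V_{\mu_\bullet}^{\hat H}(\l_H(\g)) = \sum_{\mu'_\bullet} \bigl[V_{\mu_\bullet}^{\hat H} : V_{\mu'_\bullet}^{\hat H_M}\bigr] \cdot \dim V_{\mu'_\bullet}^{\hat H_M}(\l_{H_M}(\g)),
\]
and Lemma \ref{approximation} applied both to $G \subseteq H$ and to $M \subseteq H_M$ identifies $\l_H(\g) = \l_G(\g) = \l_M(\g) = \l_{H_M}(\g)$, so both sides of the geometric and representation-theoretic sums are computed against the same cocharacter.

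Finally, $\g$ is basic in $M(L)$ and $M$ factors as a product of general linear groups, so applying Proposition \ref{minu-basic} to each $\GL_{n_i}$ factor (and using that both sides of the proposition are multiplicative over direct products) gives
\[
|\JJ_\g^{H_M} \backslash \Irr X_{\mu'_\bullet}^{H_M}(\g)| = \dim V_{\mu'_\bullet}^{\hat H_M}(\l_{H_M}(\g))
\]
for every relevant $\mu'_\bullet$, and matching the three displays proves the proposition. The main obstacle is the geometric decomposition in the first step: one must decompose $X_{\mu_\bullet}^H(\g)$ along a Hodge-Newton / Levi reduction so that the stratum multiplicities agree with the $\hat H \supseteq \hat H_M$ branching rule. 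I expect this to be carried out by running the semi-module machinery of Theorem 1.1 without the basic assumption, organizing semi-modules by their $M$-Newton data and using Theorem \ref{sat} to estimate dimensions on each stratum.
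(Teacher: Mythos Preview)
Your overall strategy is exactly the paper's: pass to the standard Levi $M_\g \subseteq H$ attached to $\nu_H(\g)$ (which, since $\nu_H(\g)$ is diagonal, coincides with your $H_M = \prod_\t M$), apply Proposition~\ref{minu-basic} there, and match with branching on the $\hat H$-side. The chain $\l_H(\g)=\l_G(\g)=\l_M(\g)=\l_{H_M}(\g)$ is fine.

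Where your proposal diverges from the paper is in how you handle the ``main obstacle,'' the geometric Levi reduction. You propose to run the semi-module machinery of Theorem~1.1 without the basic hypothesis and organize strata by $M$-Newton data. That would be a substantial undertaking and is not what is needed: the paper does no new geometry here at all. The required bijection
\[
\JJ_\g^H \backslash \Irr X_{\mu_\bullet}^H(\g)\ \overset{\sim}{\longrightarrow}\ \bigsqcup_{\chi_\bullet \in I_{\mu_\bullet,\g}} \JJ_\g^H \backslash \Irr X_{\chi_\bullet}^{M_\g}(\g)
\]
is quoted directly from \cite[Corollary~5.9]{HV}, which is a general Hodge--Newton/Levi reduction for irreducible components (valid whenever $\g \in M_\g(L)$ with $\nu_{M_\g}(\g)=\nu_H(\g)$). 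Once you invoke that, the rest of your argument goes through verbatim.

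One simplification you are missing: because $\mu_\bullet$ is minuscule, the branching from $\hat H$ to $\hat M_\g$ is multiplicity-free, i.e.\ $V_{\mu_\bullet}^{\hat H}=\bigoplus_{\chi_\bullet} V_{\chi_\bullet}^{\hat M_\g}$ with each $[V_{\mu_\bullet}^{\hat H}:V_{\chi_\bullet}^{\hat M_\g}]\in\{0,1\}$, the sum running over $M_\g$-dominant $\chi_\bullet \leq \mu_\bullet$. So your two displayed sums collapse to sums over the index set $I_{\mu_\bullet,\g}$ (after discarding the $\chi_\bullet$ with $V_{\chi_\bullet}^{\hat M_\g}(\l_{M_\g}(\g))=0$, which are exactly those with the wrong image under $\k_{M_\g}$), and no Satake-type fiber count or Theorem~\ref{sat} enters at this stage.
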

\begin{proof}
Let $T_H \subseteq M_\g \subseteq P_\g \subseteq H$ be the standard Levi and parabolic subgroups associated to $\nu_H(\g)$. Let $X_*(T_H)_{M_\g-\dom}^{\leq \mu_\bullet}$ denote the set of $M_\g$-dominant cocharactors $\chi_\bullet$ such that $\chi_\bullet \leq \mu_\bullet$. Let $I_{\mu_\bullet, \g} = I_{\mu_\bullet, \g, M_\g}$ be the set of cocharacters $\chi_\bullet \in X_*(T_H)_{M_\g-\dom}^{\leq \mu_\bullet}$ such that $\k_{M_\g}(\chi_\bullet)=\k_{M_\g}(\g)$. By definition, $\JJ_\g^H \subseteq M_\g(L)$. Then \cite[Corollary 5.9]{HV} tells that there is a natural bijection $$\JJ_{\g}^H \backslash \Irr X_{\mu_\bullet}^H(\g) \overset \sim \longrightarrow \bigsqcup_{\chi_\bullet \in I_{\mu_\bullet, \g}} \JJ_{\g}^H \backslash \Irr X_{\chi_\bullet}^{M_\g}(\g),$$ where $ I_{\mu_\bullet, \g}$ is defined analogously as in \cite[\S 5]{HV}.
Therefore, \begin{align*} |\JJ_{\g}^H \backslash \Irr X_{\mu_\bullet}^H(\g)| &= \sum_{\chi_\bullet \in I_{\mu_\bullet, \g}} |\JJ_{\g}^H \backslash X_{\chi_\bullet}^{M_\g}(\g)| \\  &= \sum_{\chi_\bullet \in I_{\mu_\bullet, \g}} \dim V_{\chi_\bullet}^{\hat M_\g}(\l_{M_\g}(\g)) \\ &=  \sum_{\chi_\bullet \in X_*(T_H)_{M_\g-\dom}^{\leq \mu_\bullet}} \dim V_{\chi_\bullet}^{\hat M_\g}(\l_{M_\g}(\g)) \\ &= \dim V_{\mu_\bullet}^{\hat H}(\l_{M_\g}(\g)) \\ &= \dim V_{\mu_\bullet}^{\hat H}(\l_H(\g)),  \end{align*} where the second equality follows from Proposition \ref{minu-basic} as $\g$ is basic in $M_\g(L)$; the third one follows from that $V_{\chi_\bullet}^{\hat M_\g}(\l_{M_\g}(\g)) = 0$ (for $\chi_\bullet \in X_*(T_H)_{M_\g-\dom}^{\leq \mu_\bullet}$) unless $\chi_\bullet \in I_{\mu_\bullet, \g}$; the fourth one follows from that $V_{\mu_\bullet}^{\hat H} = \oplus_{\chi_\bullet \in X_*(T_H)_{M_\g-\dom}^{\leq \mu_\bullet}} V_{\chi_\bullet}^{\hat M_\g}$ since $\mu_\bullet$ is minuscule.
\end{proof}

\begin{cor} \label{gl}
Suppose $G=\GL_n$. Then $|\JJ_\g^G \backslash X_\mu^G(\g)| = \dim V_\mu^{\hat G}(\l_G(\g))$.
\end{cor}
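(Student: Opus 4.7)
The plan is to prove the corollary by induction on the dominance order on $\mu \in X_*(T)_{G-\dom}$, bootstrapping from Proposition \ref{minu} applied with a flexibly chosen $d$. For the base case take $\mu$ minuscule: applying Proposition \ref{minu} with $d=1$ collapses $H$ to $G$, $\mu_\bullet$ to $\mu$, and (via Lemma \ref{approximation}) $\l_H(\g)$ to $\l_G(\g)$, so the identity in the corollary is immediate in this case.

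For the inductive step, assume the corollary for every dominant $\l$ with $\l<\mu$. Since $G=\GL_n$, every dominant coweight is a sum of minuscule dominant coweights, so write $\mu=\mu_1+\cdots+\mu_d$ with each $\mu_i$ minuscule dominant, and put $\mu_\bullet=(\mu_1,\dots,\mu_d)\in X_*(T_H)_{H-\dom}$, so $|\mu_\bullet|=\mu$. The tensor product decomposition
$$V_{\mu_\bullet}^{\hat H}=V_{\mu_1}^{\hat G}\otimes\cdots\otimes V_{\mu_d}^{\hat G}=\bigoplus_{\l\le\mu}(V_\l^{\hat G})^{\oplus a_{\mu_\bullet}^\l}$$
satisfies $a_{\mu_\bullet}^\mu=1$, which is the standard multiplicity-one statement for the sum of highest weights in a tensor product of irreducibles. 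Extracting $\l_G(\g)$-weight spaces and using Lemma \ref{approximation} yields
$$\dim V_{\mu_\bullet}^{\hat H}(\l_H(\g))=\dim V_\mu^{\hat G}(\l_G(\g))+\sum_{\l<\mu}a_{\mu_\bullet}^\l\dim V_\l^{\hat G}(\l_G(\g)),$$
while Lemma \ref{compare} gives the parallel expansion
$$|\JJ_\g^H\backslash\Irr X_{\mu_\bullet}^H(\g)|=|\JJ_\g^G\backslash\Irr X_\mu^G(\g)|+\sum_{\l<\mu}a_{\mu_\bullet}^\l\,|\JJ_\g^G\backslash\Irr X_\l^G(\g)|.$$
By Proposition \ref{minu} the two left-hand sides coincide, and by the inductive hypothesis each summand with $\l<\mu$ matches on the right, so cancelling those terms delivers the required equality for $\mu$.

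No serious obstacle appears: all the heavy lifting is already concentrated in Proposition \ref{minu} (ultimately resting on Proposition \ref{minu-basic}) together with Lemma \ref{compare}, and the representation-theoretic input $a_{\mu_\bullet}^\mu=1$ is classical. The only mild care-taking is the degenerate case $X_\mu^G(\g)=\emptyset$, in which both sides of the claimed identity vanish (with $V_\mu^{\hat G}(\l_G(\g))=0$ whenever $\l_G(\g)\not\le\mu$); this is compatible with the induction and requires no separate argument.
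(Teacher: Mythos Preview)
Your argument is correct and matches the paper's proof essentially line for line: induction on the dominance order, writing $\mu$ as a sum of minuscule coweights, applying Proposition~\ref{minu} to the left-hand side, expanding via Lemma~\ref{compare} and the tensor decomposition (with $a_{\mu_\bullet}^\mu=1$), and cancelling the lower terms by induction. The only cosmetic difference is the base case---the paper cites \cite{HV} directly for minuscule $\mu$, whereas you invoke Proposition~\ref{minu} with $d=1$, which amounts to the same thing.
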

\begin{proof}
If $\mu$ is minuscule, it is proved in \cite{HV}. Assume it is true for all dominant cocharacters $\mu'$ such that $\mu' < \mu$. We show it is also true for $\mu$. Since $G=\GL_n$, there exist $d' \in \ZZ_{\ge 1}$ and a $d'$-tuple $\mu_\bullet$ of dominant minuscule cocharacters (of $G$) such that $\mu=|\mu_\bullet|$. Set $H=H_{d'}$. Thus \begin{align*} \dim V_{\mu_\bullet}^{\hat H}(\l_G(\g)) &= |\JJ_{\g}^H \backslash \Irr X_{\mu_\bullet}^H(\g)| \\ &= \sum_{\chi \leq \mu} a_{\mu_\bullet}^\chi |\JJ_\g^G \backslash \Irr X_\chi^G(\g)| \\ &= |\JJ_\g^G \backslash \Irr X_\mu^G(\g)| + \sum_{\chi < \mu} a_{\mu_\bullet}^\chi |\JJ_\g^G \backslash \Irr X_\chi^G(\g)|  \\ & = |\JJ_\g^G \backslash \Irr X_\mu^G(\g)| + \sum_{\chi < \mu} a_{\mu_\bullet}^\chi \dim V_\chi^{\hat G}(\l_G(\g)), \end{align*} where the first equality follows from Proposition \ref{minu}; the second one follows from Lemma \ref{compare}; the last one follows from induction hypothesis. On the other hand, we have $\dim V_{\mu_\bullet}^{\hat H}(\l_H(\g))=\sum_{\chi \leq \mu} a_{\mu_\bullet}^\chi \dim V_\chi^{\hat G}(\l_G(\g))$ by Lemma \ref{approximation}. Therefore, $|\JJ_\g^G \backslash X_\mu^G(\g)| = \dim V_\mu^{\hat G}(\l_G(\g))$ as desired.
\end{proof}

Combining Lemma \ref{compare} with Corollary \ref{gl}, we deduce that
\begin{cor} \label{main'}
Suppose $G=GL_n$. Then $|\JJ_{\g}^H \backslash X_{\mu_\bullet}^H(\g)| = V_{\mu_\bullet}^{\hat H}(\l_H(\g))$.
\end{cor}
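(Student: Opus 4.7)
The plan is to assemble the corollary from the three tools that are now in hand: Lemma \ref{compare}, Corollary \ref{gl}, and Lemma \ref{approximation}. They conspire to turn the statement into a tautology about weight multiplicities, so no new geometric input should be required.

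First I would apply Lemma \ref{compare}, which is applicable because $\mu_\bullet$ is assumed to be a sum of dominant minuscule cocharacters, to rewrite
\begin{equation*}
|\JJ_\g^H \backslash \Irr X_{\mu_\bullet}^H(\g)| = \sum_{\l \le |\mu_\bullet|} a_{\mu_\bullet}^\l\,|\JJ_\g^G \backslash \Irr X_\l^G(\g)|,
\end{equation*}
where the sum is over dominant cocharacters $\l$ of $T$. Since $G=\GL_n$, the just-established Corollary \ref{gl} replaces each factor $|\JJ_\g^G \backslash \Irr X_\l^G(\g)|$ by $\dim V_\l^{\hat G}(\l_G(\g))$, turning the right-hand side into $\sum_\l a_{\mu_\bullet}^\l \dim V_\l^{\hat G}(\l_G(\g))$.

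It remains to recognise this sum as $\dim V_{\mu_\bullet}^{\hat H}(\l_H(\g))$. Restricting the irreducible $\hat H$-module $V_{\mu_\bullet}^{\hat H}=V_{\mu_1}^{\hat G}\otimes\cdots\otimes V_{\mu_d}^{\hat G}$ to the diagonal $\hat G \subseteq \hat H$ gives the decomposition $V_{\mu_\bullet}^{\hat H} \cong \bigoplus_\chi (V_\chi^{\hat G})^{\oplus a_{\mu_\bullet}^\chi}$ by definition of the multiplicities $a_{\mu_\bullet}^\chi$, so extracting $\l_G(\g)$-weight spaces on both sides yields precisely the sum above. Finally Lemma \ref{approximation} gives $\l_H(\g)=\l_G(\g)$ under the identification $X_*(T)=X^*(\hat S_H)$, which closes the chain.

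The only point demanding care is the last identification of weight spaces: one must check that the $\hat S_H$-weight space of $V_{\mu_\bullet}^{\hat H}$ attached to $\l_H(\g)$ agrees with the $\hat T$-weight space of the restricted $\hat G$-module attached to $\l_G(\g)$. This follows at once from the fact that the diagonal embedding $\hat G\lto \hat H$ is dual to the projection $T_H \twoheadrightarrow S_H$, together with Lemma \ref{approximation}; so this step is bookkeeping rather than a genuine obstacle. Consequently the proof should fit comfortably in a few lines once the three earlier results are in place.
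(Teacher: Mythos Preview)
Your proposal is correct and matches the paper's proof exactly: the paper's entire argument is the single line ``Combining Lemma \ref{compare} with Corollary \ref{gl}, we deduce'' Corollary \ref{main'}, and you have simply unpacked that line by writing out the tensor decomposition $V_{\mu_\bullet}^{\hat H}=\bigoplus_\chi (V_\chi^{\hat G})^{\oplus a_{\mu_\bullet}^\chi}$ and invoking Lemma \ref{approximation} to pass from $\l_G(\g)$ to $\l_H(\g)$.
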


\section{Decomposition by semi-modules} \label{semi}
Let notations be as in \S\ref{reduce}, except that we use $\chi$, instead of $\chi_\bullet$, to denote a cocharacter in $X_*(T_H)$. Moreover, without loss of generality, we can assume $H=\Res_{\FF_{q^d} / \FF_q} \GL_n$ for simiplicity. We will use the method of semi-modules to give a decomposition for $X_\mu^H(\g)$, where $\mu$ is minuscule and $\g$ is basic.

\

For each $\t \in \ZZ_d$ let $N_\t=\oplus_{1 \le i \le n} L e_{\t, i}$. Then $H_\t(L) \cong \GL_n(N_\t)$ and $$H(L) = \prod_{\t \in \ZZ_d} H_\t(L) \cong \prod_{\t \in \ZZ_d} \GL_n (N_\t).$$ Via this identification, we can specify the notations in \S\ref{reduce} as follows.
\begin{itemize}
\item $\s=\s^H: H(L) \to H(L)$ is induced by $e_{\t, i} \mapsto e_{\t-1, i}$. \\

\item  $\g \in H(L)$ is given by $e_{\t, i} \mapsto e_{\t, i+m}$ for some $m \in \ZZ_{\ge 0}$ if $\t=0 \in \ZZ_d$ and $e_{\t, i} \mapsto e_{\t, i}$ otherwise. Here we adopt the convention that $e_{\t, i+n}=t e_{\t, i}$.\\

\item $S_H \subseteq T_H \subseteq B_H$ denote the split diagonal torus, the diagonal torus and the upper triangular Borel subgroup, respectively. \\

\item $X_*(T_H)=(\ZZ^n)^{\ZZ_d}$ and $\chi=(\chi_\t)_{\t \in \ZZ_d} \in X_*(T_H)$ is dominant if and only if each $\chi_\t$ is dominant, that is, $\chi_\t(1) \ge \cdots \ge \chi_\t(n)$. \\

\item $\mu=(\mu_\t)_{\t \in \ZZ_d}$ is the fixed dominant minuscule cocharacter. Let $m_\t=\sum_{i=1}^n \mu_\t(i)$ for $\t \in \ZZ_d$. As $X_\mu^H(\g) \neq \emptyset$, we have $m=\sum_\t m_\t$. \\

\item $K_H=\prod_\t GL_n(\D_\t)$, where $\D_\t = \oplus_{i=1}^n \co e_{\t, i}$ is the standard $\co$-lattice in $N_\t$. The map $g_\t \mapsto g_\t \D_\t$ induces a bijection $$H(L) / K_H \cong \{\L=(\L_\t)_{\t \in \ZZ_d}; \L_\t \text{ is a lattice in $N_\t$} \}.$$
\end{itemize}
Let $\L, \L' \in H(L)/K_H$. Denote by $\inv(\L, \L')$ the unique dominant cocharacter $\chi$ such that $g(\L, \L')=(\D, t^\chi \D)$ for some element $g \in G(L)$. Now the affine Deligne-Lusztig variety $X_\mu^H(\g)$ is given by $$X_\mu(\g)=X_\mu^H(\g)=\{\L \in H(L)/K_H; \inv(\L, \g \s(\L))=\mu\}.$$

\subsection{} \label{setup} Let $O=\ZZ_d \times \ZZ$. Let $(\t', i'), (\t, i) \in O$. We write $(\t, i) \le (\t', i')$ if $\t=\t'$ and $i \le i'$. For $k \in \ZZ$ we set $k+(\t, i)=(\t, i)+k=(\t, i+k) \in O$. For $a=(\t, i) \in O$ we set $e_a=e_{\t, i} \in N_\t$. For $v \in N_\t$ we define $\h(v)=\max_{\le} \{a; v \in \sum_{j=0}^\infty \kk e_{a+j}\}$.

Let $h \in \ZZ_{\ge 1}$ be the greatest common divisor of $m$ and $n$. Set $n'=n/h$ and $m'=m/h$. Let $\t \in \ZZ_d$ and $k \in \ZZ$. We set $O^k=\{(\t, j) \in O; j = k \mod h\}$ and $O_\t=\{(\t, j) \in O; j \in \ZZ\}$. For any subset $E \subseteq O$, we set $E^k=E \cap O^\e$, $E_\t=E \cap O_\t$ and $E_\t^k=E \cap O_\t \cap O^k$. Define $f: O \to O$ by $(\t, i) \mapsto (\t-1, i+m)$ if $\t=1$ and $(\t, i) \mapsto (\t-1, i)$ otherwise. Notice that $\g\s(e_a)=e_{f(a)}$ for $a \in O$.

We say a subset $A \subseteq O$ is a {\it semi-module} (for $H$) if $A$ is bounded below, $n+A, f(A) \subseteq A$ and $O=n\ZZ + A$. Set $\bar A = A \setminus (n+A)$. For $a \in A$, let $\varphi_A (a) \in \ZZ$ such that $f(a)- n\varphi_A (a) \in \bar A$. Then we have a bijection $r_A: \bar A \overset \sim \to \bar A$ by $b \mapsto f(b)- n\varphi_A(b)$. We say a semi-module $A$ is of Hodge type $\mu$ if $(\mu_\t(i))_{1 \le i \le n}$ is a permutation of $(\varphi_A(b))_{b \in \bar A_{\t+1}}$ for $\t \in \ZZ_d$.

Let $\L=(\L_\t)_{\t \in \ZZ_d} \in H(L)/K_H$. We set $$A(\L)=\{\h(v); 0 \neq v \in \L_\t \text{ for some } \t \in \ZZ_d\} \subseteq O.$$ For $a \in A(\L)$ define $\varphi_\L(a)$ to be the maximal integer $l$ such that $t^{-l} \g\s(v) \in \L$ for some $v \in \L$ with $\h(v)=a$. We set $\bar A(\L) =\overline{A(\L)}$, $\bar A^k_\t(\L)=\bar A(\L)^k_\t$ and so on.
\begin{lem} \cite[Corollary 5.10]{H} \label{lattice}
We have $\L \in X_\mu(\g)$ if and only if $A(\L)$ is a semi-module of Hodge type $\mu$ and $\varphi_\L = \varphi_{A(\L)}$.
\end{lem}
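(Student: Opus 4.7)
My plan is to obtain this lemma as a component-wise application of the analogous $\GL_n$ statement due to Hamacher \cite[Corollary 5.10]{H}. The condition $\inv(\L, \g\s(\L)) = \mu$ factors as $d$ separate conditions $\inv(\L_\t, \g\s(\L_{\t+1})) = \mu_\t$ indexed by $\t \in \ZZ_d$, and the data $A(\L)$, $\varphi_\L$, the Hodge-type condition, and the semi-module axioms all split by $\t$. The only coupling across $\t$ is through $f$, which restricts to a set-theoretic bijection $O_{\t+1} \to O_\t$ (a shift by $m$ when $\t+1=1$, identity otherwise). Hence the $\GL_n$ analysis transfers directly to each $\t$-component, and the multi-factor version follows.

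For a self-contained sketch, the central mechanism is a leading-term basis. Since $A(\L)$ is closed under $+n$ and $O = n\ZZ + A(\L)$, the set $\bar A(\L)_\t$ contains exactly one representative of each residue class mod $n$ in $O_\t$, so has $n$ elements. Pick $v_a \in \L_\t$ with $\h(v_a) = a$, giving a $\co$-basis $\{v_a\}_{a \in \bar A(\L)_\t}$ of $\L_\t$. Using $\g\s(e_b) = e_{f(b)}$, the vector $\g\s(v_a)$ for $a \in \bar A(\L)_{\t+1}$ has leading index $f(a) = r_{A(\L)}(a) + n\,\varphi_{A(\L)}(a)$. Expanding $\g\s(v_a) = \sum_{b \in \bar A(\L)_\t} c_{a,b}(t)\, v_b$ in the basis above, the leading-term identity pins down $v_t(c_{a, r_{A(\L)}(a)}) = \varphi_{A(\L)}(a)$, while the other valuations $v_t(c_{a,b})$ are bounded below by the minimality of $f(a)$ as a leading index. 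The Cartan invariant $\inv(\L_\t, \g\s(\L_{\t+1}))$ is then the Smith normal form of the matrix $\bigl(c_{a,b}(t)\bigr)$.

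The minuscule hypothesis on $\mu_\t$ restricts the allowed elementary divisors to two consecutive integers, and combined with the leading-term analysis this yields the equivalence: $\inv(\L_\t, \g\s(\L_{\t+1})) = \mu_\t$ holds if and only if (i) the multiset $\{\varphi_{A(\L)}(a) : a \in \bar A(\L)_{\t+1}\}$ equals $\{\mu_\t(i)\}_{i=1}^n$, and (ii) the minimum $\min_b v_t(c_{a,b})$ is attained at $b = r_{A(\L)}(a)$, which translates exactly to $\varphi_\L(a) = \varphi_{A(\L)}(a)$. The remaining semi-module axiom $f(A(\L)) \subseteq A(\L)$ is then automatic from the inclusion $\g\s(\L_{\t+1}) \subseteq \L_\t$, which is in turn forced by $\mu_\t$ being dominant minuscule. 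I expect the main obstacle to be part (ii): ruling out accidental cancellations in which some lower-order coefficient $c_{a,b}$ (with $b \neq r_{A(\L)}(a)$) has $t$-adic valuation strictly below the leading one. This is exactly what the minuscule hypothesis combined with the imposed condition $\varphi_\L = \varphi_{A(\L)}$ controls, and once that is in place the remaining bookkeeping is routine.
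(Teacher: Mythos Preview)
The paper gives no proof of this lemma; it is quoted verbatim from Hamacher \cite[Corollary~5.10]{H}, whose framework of extended EL-charts already covers $\Res_{\FF_{q^d}/\FF_q}\GL_n$ directly. Your component-wise reduction to the $\GL_n$ case is therefore consistent with the paper's treatment, though strictly speaking unnecessary: Hamacher's statement applies to $H$ as written.

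One correction to your self-contained sketch: the inclusion $\g\s(\L_{\t+1}) \subseteq \L_\t$ is \emph{not} forced by $\mu_\t$ being dominant minuscule. A dominant minuscule cocharacter of $\GL_n$ can have negative entries (e.g.\ $(0,\dots,0,-1)$), in which case the containment goes the other way and your derivation of $f(A(\L))\subseteq A(\L)$ breaks down. What actually happens is that the paper's semi-module axiom $f(A)\subseteq A$ already forces $\varphi_A\ge 0$, so any semi-module of Hodge type $\mu$ has $\mu_\t\ge 0$ automatically; equivalently, one normalizes by a central twist so that each $\mu_\t\in\{0,1\}^n$ (as the paper does tacitly later, writing ``we can assume $\varphi_A(b)\in\{0,1\}$''). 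Once you state this normalization, your argument for $f(A(\L))\subseteq A(\L)$ is fine, and the rest of the sketch---leading-term basis, Smith normal form, and the identification of $\varphi_\L=\varphi_{A(\L)}$ with the valuation condition on the change-of-basis matrix---follows the standard pattern in Hamacher's and Viehmann's papers.
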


\subsection{} \label{equation} Let $A$ be a semi-module of Hodge type $\mu$. Let $\iota \in \ZZ_d$. Set $Y_\iota=\{\max_{\le} \bar A^k_\iota; 1 \le k \le h\}$. Let\begin{align*}V(A) &=\{(b, j) \in \bar A \times \ZZ_{\ge 1}; b+j \in \bar A, \ \varphi_A(b) > \varphi_A(b+j)\}; \\ W(A, \iota) &= \{(b, j) \in Y_\iota \times \ZZ_{\ge 1}; b + j \notin A\}; \\ D(A, \iota) &=V(A) \cup W(A, \iota). \end{align*} Notice that $V(A) \cap W(A, \iota) = \emptyset$. Let $\preceq_\iota$ be a partial order on $\bar A$ such that $r_A\i(b) \preceq_\iota b$ for $b \in \bar A \setminus Y_\iota$. This induces a partial order on $\bar A \times \ZZ_{\ge 0}$, which is still denoted by $\preceq_\iota$, such that $(b, j) \preceq_\iota (b', j')$ if either $j < j'$ or $j=j'$ and $b \preceq_\iota b'$.

\

For $x=(x_{b, j})_{(b, j) \in D(A, \iota)} \in \AA^{D(A, \iota)}$ we consider the following equations:

(0) $v(a) \in \sum_{j=0}^\infty \a_{a, j} e_{a+j}$ with coefficients $\a_{a, j} \in \kk$ such that $\a_{a, 0}=1$;

(1) $v(b)=e_b + \sum_{(b, j) \in W(A, \iota)} x_{b, j} e_{b+j} + \sum_{(b, j) \in V(A)} x_{b, j} v(b+j)$ for $b \in Y_\iota$;

(2) $v(b)=t^{-\varphi_A(r_A\i(b))} \g\s(v(r_A\i(b))) + \sum_{(b, j) \in V(A)} x_{b, j} v(b+j)$ for $b \in \bar A \setminus Y_\iota$;

(3) $v(a)=t v(a-n)$ if $a \in n + A$;

\

Thanks to Lemma \ref{exist} below, such vectors $v(a)$ for $a \in A$ always exist and are unique. We set $\L(x)=(\L_\t(x))_{\t \in \ZZ_d}$, where $\L_\t(x)$ is the $\co$-lattice spanned by $v(b)$ for $b \in \bar A_\t$. We say $(v(a))_{a \in A}$ is the normalized basis for $\L(x)$ or $x \in \AA^{D(A, \iota)}$. Moreover, we denote by $\cl(A, \iota)$ the set of points $x \in \AA^{D(A, \iota)}$ such that

(4) $t^{-\varphi_A(b)} \g\s(v(b)) \in \L(x)$ for $b \in \bar A$.

Thanks to (2), the condition (4) is equivalent to

(4') $t^{-\varphi_A(r_A\i(b))} \g\s(v(r_A\i(b))) \in \L(x)$ for $b \in Y_\iota$.

\begin{lem} [{\rm cf. Claim 1 of \cite[Theorem 4.3]{V}}] \label{exist}
For each point $x \in \AA^{D(A, \iota)}$, there exists a unique collection of vectors $(v(a))_{a \in A}$ for which the equations (0)-(3) hold. Moreover, the coefficients $\a_{b, j}$, viewed as functions on $\AA^{D(A, \iota)}$, belong to the polynomial ring $P_{b, j}:=\kk[X_{c, i}; (c, i) \in D(A, \iota), (c, i) \preceq_\iota (b, j)]$.
\end{lem}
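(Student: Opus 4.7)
The plan is to proceed by induction on the well-founded partial order on $\bar A \times \ZZ_{\ge 0}$ induced by $\preceq_\iota$, establishing existence, uniqueness, and the polynomial dependence of the coefficients $\a_{b, j}$ simultaneously. First I would note that equation (3) reduces the problem to defining $v(b)$ only for $b \in \bar A$: if $a = b + kn$ with $b \in \bar A$ and $k \ge 0$, then iterating (3) forces $v(a) = t^k v(b)$, which together with the convention $e_{a + j} = t^k e_{b + j}$ yields $\a_{a, j} = \a_{b, j}$. Thus the extension from $\bar A$ to all of $A$ is automatic, and it suffices to construct $v(b)$ uniquely for $b \in \bar A$.

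Next I would convert (1) and (2) into explicit coefficient recursions via (0). For $b \in Y_\iota$, matching the coefficient of $e_{b + j}$ on both sides of (1) and using $\a_{b + j, 0} = 1$ yields
\[ \a_{b, j} = \mathbf{1}_{(b, j) \in D(A, \iota)}\, x_{b, j} + \sum_{\substack{(b, j') \in V(A) \\ 1 \le j' < j}} x_{b, j'}\, \a_{b + j', j - j'}. \]
For $b \in \bar A \setminus Y_\iota$, set $c = r_A\i(b)$ and $\varphi = \varphi_A(c)$. Using that $f(c + i) = f(c) + i$ for every $i$ and $f(c) - n\varphi = r_A(c) = b$, I would first establish $t^{-\varphi} \g\s(e_{c + i}) = e_{b + i}$, so that the first term on the right of (2) contributes $\sum_i \a_{c, i}\, e_{b + i}$; coefficient-extraction then gives
\[ \a_{b, j} = \a_{c, j} + \mathbf{1}_{(b, j) \in V(A)}\, x_{b, j} + \sum_{\substack{(b, j') \in V(A) \\ 1 \le j' < j}} x_{b, j'}\, \a_{b + j', j - j'}. \]

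With these recursions in hand, I would carry out the induction. At level $j = 0$, equation (1) directly forces $\a_{b, 0} = 1$ for $b \in Y_\iota$, and then descending along the $\preceq_\iota$-chains in $\bar A$, equation (2) propagates $\a_{b, 0} = \a_{r_A\i(b), 0} = 1$ to all of $\bar A \setminus Y_\iota$. For $j \ge 1$, every coefficient appearing on the right-hand side has index strictly below $(b, j)$ in the extended order: each $(b + j', j - j')$ has smaller first component, and the auxiliary $(c, j) = (r_A\i(b), j)$ satisfies $c \prec_\iota b$; by induction these lie in the polynomial rings $P_{b + j', j - j'}$ and $P_{c, j}$ respectively, both contained in $P_{b, j}$, while the new variables $x_{b, j'}$ that appear satisfy $(b, j') \preceq_\iota (b, j)$. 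Thus the formulas define $\a_{b, j}$ uniquely and exhibit it as an element of $P_{b, j}$, completing the induction.

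The main technical point on which the whole scheme rests is the well-foundedness of $\preceq_\iota$ at the base level $j = 0$: one must verify that every descending chain $b \succ_\iota r_A\i(b) \succ_\iota r_A^{-2}(b) \succ_\iota \cdots$ in $\bar A \setminus Y_\iota$ terminates by entering $Y_\iota$, i.e.\ that every $r_A$-cycle in $\bar A$ meets $Y_\iota$. This is built into the definition of $\preceq_\iota$ from $r_A$ and $Y_\iota$, but I would spell it out carefully, since it underlies the base case of the induction and also guarantees that the two types of recursion (for $Y_\iota$ and for its complement) never circle back on themselves.
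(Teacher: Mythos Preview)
Your approach is essentially the paper's: induction on $\preceq_\iota$ over $\bar A \times \ZZ_{\ge 0}$, translating (1)--(2) into coefficient recursions, and reading off both uniqueness and the polynomial dependence from those recursions. Your handling of (3) and of the well-foundedness of $\preceq_\iota$ (each $r_A$-cycle meets $Y_\iota$) is more explicit than the paper's, which simply takes $\preceq_\iota$ as given.

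There is, however, one genuine slip in your second recursion. The operator in (2) is $\g\s$, and $\s$ is the Frobenius on $L$, hence is $q$-semilinear over $\kk$: for $v(c)=\sum_i \a_{c,i}\,e_{c+i}$ one has
\[
t^{-\varphi}\,\g\s(v(c)) \;=\; \sum_{i\ge 0}\a_{c,i}^{\,q}\,e_{b+i},
\]
not $\sum_i \a_{c,i}\,e_{b+i}$. Thus for $b\in\bar A\setminus Y_\iota$ the correct recursion is
\[
\a_{b,j} \;=\; \a_{r_A^{-1}(b),\,j}^{\,q} \;+\; \sum_{\substack{(b,j')\in V(A)\\ 1\le j'\le j}} x_{b,j'}\,\a_{b+j',\,j-j'} ,
\]
matching the paper's formula~($*$). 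This does not damage your inductive scheme: $\a_{c,j}^{\,q}$ still lies in $P_{c,j}\subseteq P_{b,j}$, and the base case $\a_{b,0}=1^{q}=1$ is unaffected. But the omission of the Frobenius twist is precisely what later makes the fibres of $\cl(A,\iota)\to\AA^{V(A)}$ finite \'etale (cf.\ Lemma~\ref{finite}), so it is worth getting right here. (Also, in your inductive paragraph ``smaller first component'' should read ``smaller second component''.)
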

\begin{proof}
By induction on the partial order $\preceq_\iota$ on $\bar A \times \ZZ_{\ge 0}$, we show that there exist unique coefficients $\a_{b, j} \in P_{b, j}$ satisfying (0)-(2), modulo the lattice $\sum_{l=j+1}^\infty \kk e_{b+l}$.

If $j=0$, then $\a_{b, j} \equiv 1 \in \kk=P_{b, j}$. Suppose $j \ge 1$ and the statement holds for all pairs $(b', j')$ such that $(b', j') \prec_\iota (b, j)$. We show it also holds for $(b, j)$. By induction hypothesis, the equations (1) and (2) holds modulo $\sum_{l=j+1}^\infty \kk e_{b+l}$ if and only if \begin{align*} \tag{*} \a_{b, j} = \begin{cases} X_{b, j} + \sum_{1 \le i \le j-1, \ (b, i) \in V(A)} X_{b, i} \a_{b+i, j-i}, &\text{ if } b \in Y_\iota, (b, j) \in W(A, \iota); \\ \sum_{1 \le i \le j, \ (b, i) \in V(A)} X_{b, i} \a_{b+i, j-i}, &\text{ if } b \in Y_\iota, (b, j) \notin W(A, \iota); \\ \a_{r_A\i(b), j}^q + \sum_{1 \le i \le j, \ (b, i) \in V(A)} X_{b, i} \a_{b+i, j-i}, &\text{ otherwise. } \end{cases}\end{align*} As $(b, i) \preceq_\iota (b, j)$, $(b+i, j-i) \prec_\iota (b, j)$ for $1 \le i \le j$ and $(r_A\i(b), j) \prec_\iota (b, j)$ for $b \in \bar A \setminus Y_\iota$, the coefficients $\a_{b+i, j-i}, \a_{r_A\i(b), j} \in P_{b, j}$ is uniquely determined by induction hypothesis. So $\a_{b, j} \in P_{b, j}$ is also uniquely determined.
\end{proof}

\begin{lem} [{\rm cf. Claim 2 \& 3 of \cite[Theorem 4.3]{V}}]\label{varphi}
For $x \in \AA^{D(A, \iota)}$ we have $A(\L(x)) = A$. Moreover, $\L(x) \in X_\mu(\g)$ if $x \in \cl(A, \iota)$.
\end{lem}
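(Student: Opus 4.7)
My plan is to establish the two assertions in turn and then invoke Lemma \ref{lattice}. The main structural input throughout is that, since $A$ is a semi-module satisfying $n\ZZ + A = O$, the finite set $\bar A_\t$ contains exactly one representative of each residue class modulo $n$; combined with equation (3) this implies $\{v(b) : b \in \bar A_\t\}$ generates $\L_\t(x)$ as an $\co$-module, and equation (0) --- which gives $v(b) = e_b + (\text{strictly higher-height terms})$ --- makes these generators $\co$-linearly independent, hence an $\co$-basis of $\L_\t(x)$.

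For the first assertion I would verify that every nonzero $v \in \L_\t(x)$, expanded in this basis as $v = \sum_{b \in \bar A_\t} c_b(t) v(b)$ with $c_b(t) = \sum_{k \ge 0} c_{b, k} t^k \in \co$, has height precisely $\min\{b + kn : c_{b, k} \ne 0\}$. The triangular form of (0) yields this once one observes that two distinct pairs $(b_1, k_1) \ne (b_2, k_2)$ can never produce the same position $b_1 + k_1 n = b_2 + k_2 n$: such an equality would force $b_1 \equiv b_2 \pmod n$, contradicting uniqueness of representatives in $\bar A_\t$. Hence the set of heights realised by $\L_\t(x) \setminus \{0\}$ is exactly $\bar A_\t + n\ZZ_{\ge 0} = A_\t$, proving $A(\L(x)) = A$.

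For the second assertion, assuming $x \in \cl(A, \iota)$, I need $\varphi_{\L(x)} = \varphi_A$ on $A$. Writing $a = b_0 + k_0 n$ with $b_0 \in \bar A$, the identity $f(c + n) = f(c) + n$ forces $\varphi_A(a) = \varphi_A(b_0) + k_0$ and $r_A(a) = r_A(b_0)$. The inequality $\varphi_{\L(x)}(a) \ge \varphi_A(a)$ is immediate from condition (4): it delivers $t^{-\varphi_A(b_0)} \g\s(v(b_0)) \in \L(x)$, which (3) rewrites as $t^{-\varphi_A(a)} \g\s(v(a))$. For the reverse inequality, take $v \in \L_\t(x)$ with $\h(v) = a$ and expand in the basis as above; by the uniqueness from the first step the coefficient $c_{b_0, k_0}$ is nonzero, so applying the $\co$-semilinear map $\g\s$ and using that $\g\s(v(b))$ has height $f(b)$ produces a leading term of $\g\s(v)$ at height $f(a)$. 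Consequently $t^{-l}\g\s(v)$ has height $f(a) - ln = n(\varphi_A(a) - l) + r_A(a)$, and since $r_A(a) \in \bar A$ this height lies in $A = A(\L(x))$ if and only if $l \le \varphi_A(a)$; in particular $t^{-l}\g\s(v) \notin \L(x)$ whenever $l > \varphi_A(a)$.

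Lemma \ref{lattice} then delivers $\L(x) \in X_\mu(\g)$. The one genuinely delicate ingredient is the uniqueness-of-representative observation used in the first step, which is precisely where the semi-module axiom $n\ZZ + A = O$ enters; the remainder is a direct unpacking of equations (0)--(4) together with the $\co$-semilinearity of $\g\s$.
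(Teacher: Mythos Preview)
Your proof is correct and follows essentially the same route as the paper's: the key observation that distinct elements of $\bar A_\t$ lie in distinct residue classes modulo $n$ (which you phrase via the axiom $n\ZZ + A = O$) is exactly what the paper records as ``$b \notin b' + n\ZZ$ if $b \neq b' \in \bar A$,'' and your verification of $\varphi_{\L(x)} = \varphi_A$ unpacks what the paper compresses into the phrase ``By (2) and (4).'' You supply the reverse inequality $\varphi_{\L(x)} \le \varphi_A$ explicitly via a height computation, which the paper leaves implicit; this is a welcome addition of detail but not a different argument. One terminological nit: $\g\s$ is $\s$-semilinear rather than $\co$-semilinear, though since $\s(t)=t$ this does not affect your computation.
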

\begin{proof}
The equality $A(\L(x)) = A$ follows from the observation that $\bar A \subseteq A(\L(x))$ and that $b \notin b' + n\ZZ$ if $b \neq b' \in \bar A$.

Assume $x \in \cl(A, \iota)$. By (2) and (4) we have $\varphi_{\L(x)}=\varphi_A$. So $\L(x) \in X_\mu(\g)$ as $A$ is a semi-module of Hodge type $\mu$ (see Lemma \ref{lattice}).
\end{proof}

\begin{lem} \label{finite}
The natural projection $\cl(A, \iota) \to \AA^{V(A)}$ is finite and smooth.
\end{lem}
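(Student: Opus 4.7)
The plan is to show that $\cl(A, \iota)$ is the graph of a morphism $\AA^{V(A)} \to \AA^{W(A, \iota)}$, so that the projection $\cl(A, \iota) \to \AA^{V(A)}$ is an isomorphism (and in particular finite and smooth). This reduces the lemma to solving the defining conditions (4') triangularly for the $W(A, \iota)$-coordinates in terms of the $V(A)$-coordinates.

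First I will make (4') explicit for each $b \in Y_\iota$. Unwinding $f(r_A^{-1}(b)) = b + n \varphi_A(r_A^{-1}(b))$ and using $\g\s(e_a) = e_{f(a)}$ together with the Frobenius action on scalars gives
$$w_b := t^{-\varphi_A(r_A^{-1}(b))} \g\s(v(r_A^{-1}(b))) = \sum_{j \ge 0} \alpha_{r_A^{-1}(b), j}^q\, e_{b+j},$$
with leading coefficient $1$; by Lemma \ref{exist}, each $\alpha_{r_A^{-1}(b), j}^q$ lies in the polynomial ring generated by variables of $\preceq_\iota$-index strictly less than $(b, j)$. Condition (4') becomes $w_b \in \L_\iota(x)$, which I analyze by iteratively reducing $w_b$ against the $\co$-basis $\{v(c) : c \in \bar A_\iota\}$ of $\L_\iota(x)$: at each step I subtract the appropriate multiple of $t^k v(c)$ (where the current leading position equals $c + kn \in A_\iota$) to kill the leading term. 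The reduction succeeds precisely when every coefficient the procedure would need to ``kill'' at a bad position $b + j \notin A$ --- equivalently at a pair $(b, j) \in W(A, \iota)$ --- already vanishes, producing one polynomial equation $F_{b, j}(x) = 0$ per element of $W(A, \iota)$.

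The key structural claim is that $F_{b, j}$ has the form $-x_{b, j} + P_{b, j}$, where $P_{b, j}$ depends only on variables $x_{b', j'}$ with $(b', j') \prec_\iota (b, j)$. Indeed, by rule (1) the variable $x_{b, j}$ enters the reduction solely as the coefficient of $e_{b+j}$ in $v(b)$, contributing exactly $-x_{b, j}$ when $v(b)$ is subtracted from $w_b$; the remaining contributions come either from the $\prec_\iota$-lower Frobenius-twisted coefficients of $w_b$, or from later subtractions of $t^k v(c)$ for $c \in \bar A_\iota$ with $b < c \le b + j$, which bring in $\alpha_{c, b+j-c}$ with $b + j - c < j$ and hence only $\preceq_\iota$-lower variables. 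Inducting on $\preceq_\iota$ restricted to $W(A, \iota)$ --- essentially on $j$, with the equations for distinct $b \in Y_\iota$ at a fixed $j$ decoupled in their leading variables --- determines each $x_{b, j}$ as an explicit polynomial in lower-indexed and $V(A)$-coordinates, exhibiting $\cl(A, \iota)$ as the desired graph. The main technical obstacle is verifying this structural claim rigorously: one must track carefully which $v(c)$ appear during the reduction and how their variable dependencies (through rules (1)--(2) and the recursive formula $(*)$ in the proof of Lemma \ref{exist}) sit in $\preceq_\iota$; once this triangular structure is in hand, the projection is an isomorphism and the lemma follows.
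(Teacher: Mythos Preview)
Your argument contains a genuine error: the projection $\cl(A,\iota)\to\AA^{V(A)}$ is \emph{not} an isomorphism, only a finite \'etale cover, and the mistake lies in your assertion that ``each $\alpha_{r_A^{-1}(b),j}^q$ lies in the polynomial ring generated by variables of $\preceq_\iota$-index strictly less than $(b,j)$.'' Recall that the partial order $\preceq_\iota$ is defined by $r_A^{-1}(c)\preceq_\iota c$ only for $c\in\bar A\setminus Y_\iota$. Thus on the $r_A$-cycle through $b\in Y_\iota$, the element $b$ is $\preceq_\iota$-\emph{minimal} and $r_A^{-1}(b)=r_A^{s-1}(b)$ is $\preceq_\iota$-\emph{maximal} (where $s=n'd$ is the cycle length). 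Consequently $(b,j)\preceq_\iota(r_A^{-1}(b),j)$, and Lemma~\ref{exist} places $\alpha_{r_A^{-1}(b),j}$ in $P_{r_A^{-1}(b),j}$, which \emph{does} contain $X_{b,j}$.

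Concretely, iterating $(*)$ around the cycle gives $\alpha_{r_A^{-1}(b),j}\in\alpha_{b,j}^{q^{s-1}}+(\text{terms in strictly lower }X\text{'s})$, and since $\alpha_{b,j}\in X_{b,j}+P_{j-1}$ for $(b,j)\in W(A,\iota)$, the coefficient of $e_{b+j}$ in $w_b$ is $X_{b,j}^{q^s}+(\text{lower})$. Your reduction then produces
\[
F_{b,j}=X_{b,j}^{q^s}-X_{b,j}+\delta_{b,j},
\]
an Artin--Schreier--type equation with $q^s$ solutions over each point, not a triangular equation with a unique solution. This is exactly the shape the paper obtains; it concludes by observing that the Jacobian matrix $(\partial\beta_{b,j}/\partial X_{b',j'})$ is unipotent up to ordering (the $q^s$-power term has vanishing derivative), hence invertible, so the map is finite and \'etale. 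That the cover is genuinely nontrivial matters later in the paper: Lemma~\ref{piece} uses precisely that the coordinates $x_{y_\iota^k,i}$ range over $\FF_{q^s}$, and Lemma~\ref{N-conj} relies on $\cl(A)$ having several irreducible components.
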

\begin{proof}
Set $\L=\L(x)$. Let $b \in Y_\iota$ and $b'=r_A\i(b)$. Write $$t^{-\varphi_A(b')} \g\s(v(b'))=v(b) + \sum_{j=1}^\infty \b_{b, j} v(b + j),$$ where $\b_{b, j} \in \kk[\AA^{D(A, \iota)}]$ and $v(b + j)=e_{b + j}$ if $b + j \notin A$, that is, $(b, j) \in W(A, \iota)$. By definition, $\cl(A, \iota) \subseteq \AA^{D(A, \iota)}$ is the zero locus of the coefficients $\b_{b, j}$ such that $(b, j) \in W(A, \iota)$.

Let $s=n' d=|\bar A^k|$ for any $k \in \ZZ$. By Lemma \ref{exist} (*) and \S\ref{equation} (3) we have

(i) $\b_{b, j} \in \a_{b', j}^q - \a_{b, j} + P_{j-1}$, where $P_{j-1}=\kk[X_{c, i}; (c, i) \in D(A, \iota), 1 \le i \le j-1]$.

(ii) $\a_{b', j} \in \a_{b, j}^{q^{s-1}} + P_{j-1}[X_{c, j}; (c, j) \in V(A), b \prec_\iota c]$.

(iii) $\a_{b, j} \in X_{b, j} + P_{j-1}$ if $(b, j) \in W(A, \iota)$.

Therefore, the coefficients $\b_{b, j}$ for $(b, j) \in W(A, \iota)$ are of the form $$\b_{b, j}=X_{b, j}^{q^s} - X_{b, j} + \d_{b, j}$$ for some $\d_{b, j} \in P_{j-1}[X_{c, j}; (c, j) \in V(A), b \prec_\iota c]$. Using the partial order $\preceq_\iota$ on $W(A, \iota)$, we see that the the Jacobian matrix $(\frac{\partial \b_{b, j}}{\partial X_{b', j'}})_{(b, j), (b', j') \in W(A, \iota)}$ is invertible. So the projection $\cl(A, \iota) \to \AA^{V(A)}$ is finite and smooth.
\end{proof}

\begin{lem} [{\rm cf. Claim 4 of \cite[Theorem 4.3]{V}}] \label{unique}
Let $\L \in X_\mu(\g)$ and $\iota \in \ZZ_d$. Then there exists a unique point $x \in \cl(A(\L), \iota)$ such that $\L(x)=\L$.
\end{lem}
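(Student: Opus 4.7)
Set $A := A(\L)$. By Lemma~\ref{lattice}, $A$ is a semi-module of Hodge type $\mu$ and $\varphi_\L = \varphi_A$. The plan is to construct a basis $(v(a))_{a \in A}$ of $\L$ satisfying equations (0)--(4) and, in parallel, to read off the unique parameter point $x \in \AA^{D(A,\iota)}$. Both existence and uniqueness will follow from a single induction, since at each stage there will be exactly one admissible value.

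I proceed by induction on the partial order $\preceq_\iota$ on $\bar A \times \ZZ_{\geq 0}$: at each stage $v(b)$ is determined modulo $\sum_{l > j} \kk e_{b+l}$ for each $b \in \bar A$, and all $x_{b', j'}$ with $(b', j') \preceq_\iota (b, j)$ have already been fixed. At step $(b, j)$, the recursion $(*)$ from the proof of Lemma~\ref{exist} expresses $\a_{b, j}$ in terms of earlier data, possibly involving a new free parameter $x_{b, j}$. When $b \in Y_\iota$, equation~(1) writes $v(b)$ as $e_b$ plus contributions at non-$A$ positions (indexed by $W(A, \iota)$) and at earlier $v(b+l)$'s (indexed by $V(A)$); comparing the actual coefficient of $e_{b+j}$ in $v(b) \in \L$ with the expression from $(*)$ reads off $x_{b, j}$ uniquely whenever $(b, j) \in D(A, \iota)$, and otherwise yields a consistency check that is automatically verified by the gauge. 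When $b \in \bar A \setminus Y_\iota$, equation~(2) defines $v(b)$ in terms of the already constructed $v(r_A\i(b))$ via $\g\s$, and the parameters $x_{b, l}$ for $(b, l) \in V(A)$ are read off in the same way.

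Once the basis is built, condition~(4) for $b \in \bar A \setminus Y_\iota$ is automatic from equation~(2): the identity $t^{-\varphi_A(r_A\i(b))}\g\s(v(r_A\i(b))) = v(b) - \sum_{(b, l) \in V(A)} x_{b, l} v(b+l)$ places the left-hand side in $\L$. The only nontrivial case is (4') for $b \in Y_\iota$, the cycle-closure $t^{-\varphi_A(b')}\g\s(v(b')) \in \L$ with $b' = r_A\i(b) \in \bar A \setminus Y_\iota$. Here the equality $\varphi_\L = \varphi_A$ provides a vector $w \in \L$ with $\h(w) = b'$ realizing $\varphi_A(b')$; the difference $v(b') - w \in \L$ has height strictly above $b'$, and a further analysis based on the gauge-fixed form of $v(b')$ and the semi-module structure of $A$ forces $t^{-\varphi_A(b')}\g\s(v(b') - w) \in \L$ as well, whence (4') holds for $v(b')$.

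The main obstacle is this verification of (4') for $b \in Y_\iota$: it is the one step not forced mechanically by the induction, and it genuinely uses the hypothesis $\L \in X_\mu(\g)$ via $\varphi_\L = \varphi_A$. This step is modeled on Claim~4 in the proof of \cite[Theorem 4.3]{V}, which handles the analogous closure in the single-factor case; the combinatorics of $Y_\iota$ and the $r_A$-cycles in the present Weil-restriction setting are entirely analogous, so the argument should adapt without essential change.
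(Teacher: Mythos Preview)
Your argument has a real gap at exactly the point you flag as the main obstacle, the verification of (4') for $b \in Y_\iota$. You write that once a witness $w \in \L$ with $\h(w) = b'$ and $t^{-\varphi_A(b')}\g\s(w) \in \L$ is chosen, ``a further analysis based on the gauge-fixed form of $v(b')$ and the semi-module structure of $A$ forces $t^{-\varphi_A(b')}\g\s(v(b') - w) \in \L$.'' But this does not follow. The difference $u := v(b') - w$ is just some element of $\L$ with $\h(u) = c > b'$; to get $t^{-\varphi_A(b')}\g\s(u) \in \L$ one would need something like $\varphi_\L(c) \ge \varphi_A(b')$, and there is no reason for that inequality --- the gauge on $v(b')$ restricts which $v(b'+j)$ may be subtracted, but it does not force $u$ to lie in the span of basis vectors with large $\varphi_A$. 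The circularity in fact runs deeper: for $b \in \bar A \setminus Y_\iota$ your construction of $v(b)$ via equation (2) already needs $t^{-\varphi_A(r_A\i(b))}\g\s(v(r_A\i(b))) \in \L$ in order to conclude $v(b) \in \L$, so condition (4) is required inductively during the construction and cannot be deferred to a final check. A related issue is your claim that when $(b,j)\notin D(A,\iota)$ the consistency is ``automatically verified by the gauge'': for an arbitrarily chosen $v(b)\in\L$ there is no mechanism forcing its $e_{b+j}$-coefficient to match the value dictated by $(*)$.

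The paper organizes the induction differently so that this problem never arises. Rather than building $v(b)$ once and checking (4) afterward, it constructs approximations $(v_k(b))_{k \ge 0}$ in $\L$ which satisfy (0)--(3) only modulo $\sum_{l > k} \kk e_{b+l}$ but satisfy the invariant $t^{-\varphi_A(b)}\g\s(v_k(b)) \in \L$ \emph{exactly} at every stage; the equality $\varphi_\L = \varphi_A$ is invoked to produce $v_0(b)$, and each refinement step preserves the invariant while fixing one more coefficient and reading off $x_{b,k}$. Passing to the limit yields $v(b)\in\L$ satisfying (0)--(4), hence $x\in\cl(A,\iota)$ with $\L(x)=\L$. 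Uniqueness is then handled by a separate minimality argument: if $x'\neq x$ also gives $\L$, take the $\preceq_\iota$-least $(b,j)$ with $x_{b,j}\neq x'_{b,j}$; then $v(b)-v'(b)\in\L$ has height $b+j$ and (since both satisfy (4)) $t^{-\varphi_A(b)}\g\s(v(b)-v'(b))\in\L$, forcing $b+j\in A$ and $\varphi_A(b+j)\ge\varphi_A(b)$, contradicting $(b,j)\in D(A,\iota)$.
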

\begin{proof}
Set $A=A(\L)$. For each $k \in \ZZ_{\ge 0}$ we define a collection $(v_k(a))_{a \in A}$ of vectors in $\L$ and parameters $x_{b, j} \in \kk$ with $(b, j) \in D(A, \iota)$ and $1 \le j \le k$ such that

(i)  $\h(v_k(b))=b$ and $t^{-\varphi_A(b)} \g\s (v_k(b)) \in \L$ for $b \in \bar A$;

(ii) the equations (0)-(3) hold for $v_k(b)$ modulo $\sum_{l=k+1}^\infty \kk e_{b+l}$;

(iii) $v_k(b)-v_{k-1}(b) \in \sum_{l=k}^\infty \kk e_{b+l}$ for $b \in \bar A$;

(iv) $v_k(a)=t v_k(a-n)$ for $a \in n + A$.

If $k=0$, as $\varphi_A=\varphi_\L$ (see Lemma \ref{lattice}) we can take $v_0(b) \in \L$ for $b \in \bar A$ such that (i)-(iv) hold. Assume $(v_{k-1}(a))_{a \in A}$ and $ x_{b, j}$ for $1 \le j \le k-1$ are already constructed. Then by induction on the partial order $\preceq_\iota$ on $\bar A$, one can construct $(v_k(a))_{a \in A}$ and $x_{b, k}$ such that (i)-(iv) hold. By (iii) and (iv) the limit $v(a)=\lim_{k \to \infty} v_k(a) \in \L$ exits for $a \in A$. Since $A(\L(x))=A$ and $\L(x) \subseteq \L$, we have $\L(x)=\L$. Moreover, by (i) and (ii), $(v(a))_{a \in A}$ satisfies (0)-(4) for $x$. So $x \in \cl(A, \iota)$ as desired.

Let $x' \in \cl(A, \iota)$ such that $\L(x')=\L$. Let $(v'(a))_{a \in A}$ be the normalized basis for $x'$. Suppose $x' \neq x$. Let $$(b, j) \in \min_{\preceq_\iota} \{(c, i) \in D(A, \iota); x_{c, i} \neq x_{c, i}'\}.$$ By Lemma \ref{exist}, we have $\a_{c, i}' = \a_{c, i}$ if $(c, i) \prec_\iota (b, j)$. Thus Lemma \ref{exist} (*) tells that $\h(v(b)-v'(b))=b+j$. On the other hand, as $v(b)-v'(b) \in \L$ and $t^{-\varphi_A(b)} \g\s(v(b)-v'(b)) \in \L$, we deduce that $b+j \in A$ and $\varphi_A(b+j) \ge \varphi_A(b)$. This contradicts the fact that $(b, j) \in D(A, \iota)$.
\end{proof}

Let $\ca_\mu$ be the set of semi-modules of Hodge type $\mu$, and let $\ca_\mu^\tp$ be the set of semi-modules $A \in \ca$ such that $\dim \cl(A)=|V(A)|=\dim X_\mu(\g)$. Here $\cl(A)=\cl(A, \iota)$ for some/any $\iota \in \ZZ_h$.
\begin{cor} \label{dec}
We have the following decompositions \begin{align*} X_\mu(\g) &= \sqcup_{A \in \ca_\mu} \cl(A) \\  \Irr X_\mu(\g) &= \sqcup_{A \in \ca_\mu^\tp} \Irr \cl(A), \end{align*} where each $\cl(A)$ is a locally closed subvariety of $ X_\mu(\g)$.
\end{cor}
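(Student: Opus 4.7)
\medskip
\noindent\textbf{Proof proposal.} The plan is to interpret $\cl(A)$ set-theoretically as $\{\L \in X_\mu(\g); A(\L)=A\}$, to transport onto it the scheme structure of $\cl(A,\iota)$ via the morphism $x \mapsto \L(x)$, and finally to extract irreducible components by dimension counting against the known equi-dimensional dimension of $X_\mu(\g)$.

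First, by Lemma \ref{varphi} the map $\cl(A,\iota) \to X_\mu(\g)$, $x \mapsto \L(x)$, lands in $\{\L; A(\L)=A\}$, and by Lemma \ref{unique} it is a bijection onto this subset for every choice of $\iota \in \ZZ_d$. In particular this subset depends only on $A$ and not on $\iota$; I would take this as the definition of $\cl(A)$ inside $X_\mu(\g)$. Since by Lemma \ref{lattice} each $\L \in X_\mu(\g)$ satisfies $A(\L) \in \ca_\mu$, the first decomposition $X_\mu(\g) = \sqcup_{A \in \ca_\mu} \cl(A)$ holds as a disjoint union of sets.

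Next, I would upgrade this to a decomposition into locally closed subvarieties. The construction of the normalized basis in Lemma \ref{exist} is polynomial in the coordinates $x_{b,j}$, so $x \mapsto \L(x)$ is a morphism $\cl(A,\iota) \to X_\mu(\g)$; by the previous paragraph it is a bijection onto $\cl(A)$, and by the uniqueness part of Lemma \ref{unique} the scheme structure it induces on $\cl(A)$ is independent of $\iota$. To see that $\cl(A)$ is locally closed in $X_\mu(\g)$, I would use the standard semi-continuity of the lattice-type invariant $\L \mapsto A(\L)$ on the affine Grassmannian: within any bounded window of $O$, the condition ``$A(\L)$ contains a prescribed finite set'' is closed, so fixing $A(\L)$ completely on such a window is locally closed; running this over an exhausting family of windows gives $\cl(A)$ as a locally closed subset, and the algebraicity of both directions identifies it scheme-theoretically with $\cl(A,\iota)$.

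Finally, for the second decomposition: by Lemma \ref{finite} the scheme $\cl(A,\iota)$ is finite and smooth over $\AA^{V(A)}$, hence is smooth and equi-dimensional of dimension $|V(A)|$. Since $X_\mu(\g)$ is equi-dimensional of dimension $d_X := \<\rho_H, \mu - \nu_H(\g)\> - \tfrac{1}{2}\dft_H(\g)$ (recalled in the introduction), an irreducible component of $X_\mu(\g)$ necessarily has dimension $d_X$ and must meet, hence contain as an open dense subset, some irreducible component of a single stratum $\cl(A)$; equidimensionality forces this to happen exactly when $|V(A)|=d_X$, i.e.\ $A \in \ca_\mu^\tp$. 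Conversely, for $A \in \ca_\mu^\tp$ every irreducible component of $\cl(A)$ already has the top dimension $d_X$, so its closure in $X_\mu(\g)$ is an irreducible component, giving the claimed bijection $\Irr X_\mu(\g) = \sqcup_{A \in \ca_\mu^\tp} \Irr \cl(A)$. The one genuine obstacle is the local closedness of $\cl(A)$ in $X_\mu(\g)$, i.e.\ matching the scheme structure coming from $\cl(A,\iota)$ with the one inherited from the affine Grassmannian; everything else is a formal consequence of Lemmas \ref{lattice}--\ref{unique} and the equi-dimensionality statement quoted in the introduction.
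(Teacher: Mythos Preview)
Your argument follows essentially the same route as the paper's: the set-theoretic decomposition comes from Lemmas~\ref{lattice}, \ref{varphi}, \ref{unique}; local closedness uses the polynomiality in Lemma~\ref{exist} together with semi-continuity of $\L\mapsto A(\L)$; and the passage to irreducible components is by dimension comparison against the known equi-dimensionality of $X_\mu(\g)$.

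There is, however, one genuine point you gloss over that the paper singles out. In your sentence ``an irreducible component of $X_\mu(\g)$ \dots\ must meet, hence contain as an open dense subset, some irreducible component of a single stratum $\cl(A)$'', the implication only works if the component meets \emph{finitely many} strata. An irreducible variety written as a countable disjoint union of locally closed pieces need not have any one piece dense (and here $\kk=\overline{\FF_q}$ is countable, so no Baire-type escape is available). The paper handles this by observing that any bounded, finite-type locally closed subvariety of $X_\mu(\g)$---in particular an irreducible component---can only intersect finitely many strata $\cl(A)$, since a bound on the lattice forces a bound on the semi-module. You should insert exactly this finiteness step (and, if you want to be careful, say why an irreducible component of $X_\mu(\g)$ is bounded/of finite type) before concluding that some stratum is open dense. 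With that addition your proof is complete and matches the paper's.
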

\begin{proof}
The first decomposition follows from Lemma \ref{lattice}, Lemma \ref{varphi} and Lemma \ref{unique}. The second decomposition follows from the first one and the fact that each locally closed subvariety, which is a priori bounded and of finite type, intersects only finitely many strata in the decomposition. The last claim follows from Lemma \ref{exist}.
\end{proof}

\subsection{}\label{omega}
For $1 \le k \le h$ define $\o_k \in \JJ_\g=\JJ_\g^H$ such that \begin{align*} \o_k(e_a) = \begin{cases} e_{a+h}, & \text{ if }  a \in O^k; \\ e_a, & \text{ otherwise.} \end{cases} \end{align*} We denote by $\Omega_\g \subseteq \JJ_\g$ the subgroup generated by $\o_k$ for $1 \le k \le h$. Then $\Omega_\g$ is a free abelian group of rank $h$.

For $X, X' \subseteq O$, we write $X \le X'$ if $X_\t \le X_\t'$ for each $\t \in \ZZ_d$. We say a semi-module $A$ is {\it ordered} if $\bar A^1 \le \bar A^2 \le \cdots \le \bar A^h$.

Let $\bc \in \Irr X_\mu(\g)$. Denote by $A(\bc) \in \ca_\mu^\tp$ the unique semi-module such that $\cl(A(\bc))$ contains an open dense subset of $\bc$. For $1 \le i, j \le h$ we write $\bar A^i(\bc) \ll \bar A^j(\bc)$ if there exists $\o \in \Omega_\g$ such that $\bar A(\o \D^i) = A(\o \D^i) \setminus (n + A(\o \D^i)) \le \bar A^j(\bc)$ and $\o \D^i \subseteq \L$ for $\L \in \bc$. Here $\D^i=(\D_\t^i)_{\t \in \ZZ_d}$ with $\D_\t^i=\oplus_{k=0}^\infty e_{\t, i+k h}$. We have $\bar A^i(\bc) \le \bar A^j(\bc)$ if $\bar A^i(\bc) \ll \bar A^j(\bc)$.
\begin{lem} \label{order}
Let $\o=\prod_k \o_k^{p_k} \in \Omega_\g$ with $p_k \in \ZZ_{\ge 0}$. Let $1 \le i \neq j \le h$ such that $1 \le p_j=\max\{p_k; 1 \le k \le h\}$. Then for $\bc \in \Irr X_\mu(\g)$ we have

(1) $\bar A^i(\o^l \bc) \ll \bar A^j(\o^l \bc)$ if $p_i=0$ and $l \gg 0$;

(2) $\bar A^i(\o \bc) \ll \bar A^j(\o \bc)$ if $\bar A^i(\bc) \ll \bar A^j(\bc)$.
\end{lem}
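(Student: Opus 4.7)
I will reduce both parts of the lemma to the single key inequality
\[
\bar A^j(\o \bc) \ \ge\ \bar A^j(\bc) + p_j h
\]
(pointwise comparison of sorted length-$n'$ sequences in each $O_\t$), valid whenever $\o = \prod_k \o_k^{p_k} \in \Omega_\g$ has $p_j = \max_k p_k \ge 1$. Granting this, both conclusions follow from the evident identity $\bar A(\o \o'' \D^i) = \bar A(\o'' \D^i) + p_i h$, which comes from $\D^i \subseteq O^i$ together with the fact that $\o$ shifts the $O^i$-direction by exactly $p_i h$.

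To prove the key inequality I would use the inverse automorphism $\o^{-1}$, which shifts each $e_a$ with $a \in O^k$ by $-p_k h$. Take a generic $\L \in \bc$ and $w \in \o \L$ with $\h(w) = c \in O^j$, and write $w = e_c + \sum_{s \ge 1} \beta_s e_{c+s}$. Then
\[
\o^{-1}(w) = e_{c - p_j h} + \sum_{s \ge 1} \beta_s e_{c + s - p_{k_s} h},
\]
where $k_s$ denotes the residue class of $c+s$ modulo $h$. Maximality of $p_j$ yields $p_j \ge p_{k_s}$ for every $s$, so $c + s - p_{k_s} h \ge c + s - p_j h > c - p_j h$; the leading term therefore survives, giving an element of $\L$ at height $c - p_j h \in O^j$. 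This produces the inclusion $A(\o \bc) \cap O^j \subseteq (A(\bc) \cap O^j) + p_j h$. Since both $\bar A^j(\o \bc)$ and $\bar A^j(\bc)$ consist of the minima, one per residue class mod $n$ inside $O^j$, of the two respective sides, a routine sorting lemma (if $m'_r \ge m_r$ in every class, the sorted sequence of $m'$ pointwise dominates that of $m$) upgrades the inclusion to the claimed pointwise inequality for $\bar A^j$.

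For (2), let $\o''$ witness $\bar A^i(\bc) \ll \bar A^j(\bc)$ and set $\o''' := \o \o''$. Containment $\o''' \D^i \subseteq \L'$ for $\L' \in \o \bc$ is immediate. Combining $\bar A(\o''' \D^i) = \bar A(\o'' \D^i) + p_i h$ with the hypothesis $\bar A(\o'' \D^i) \le \bar A^j(\bc)$, the bound $p_i \le p_j$, and the key inequality gives
\[
\bar A(\o''' \D^i) \le \bar A^j(\bc) + p_i h \le \bar A^j(\bc) + p_j h \le \bar A^j(\o \bc),
\]
which is precisely $\bar A^i(\o \bc) \ll \bar A^j(\o \bc)$.

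For (1), boundedness of the finite-type irreducible component $\bc$ furnishes a single $\o'' \in \Omega_\g$ with $\o'' \D^i \subseteq \L$ for every $\L \in \bc$. Since $p_i = 0$, the automorphism $\o^l$ fixes $\o'' \D^i \subseteq O^i$ pointwise, so $\o''$ remains a viable witness for $\o^l \bc$. Applying the key inequality to $\o^l$, whose exponent vector has maximum $l p_j \ge 1$, yields $\bar A^j(\o^l \bc) \ge \bar A^j(\bc) + l p_j h$, which tends to $+\infty$; hence for $l \gg 0$ the fixed set $\bar A(\o'' \D^i)$ lies pointwise below $\bar A^j(\o^l \bc)$. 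The main obstacle is the key inequality itself: the maximality of $p_j$ is essential, since without it the higher-order contributions in $\o^{-1} w$ could eclipse the intended leading term and spoil the height computation inside $O^j$.
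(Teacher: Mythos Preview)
Your proof is correct and follows essentially the same approach as the paper. Your key inequality $\bar A^j(\o \bc) \ge \bar A^j(\bc) + p_j h$ is precisely the paper's opening observation $A^j(\o^l \L) \subseteq l h p_j + A^j(\L)$ (restated at the level of $\bar A^j$; the paper leaves this unproved whereas you supply the $\o^{-1}$ computation and the sorting step), and your deductions of (1) and (2) from it match the paper's, with your boundedness argument for the witness $\o''$ playing the role of the paper's citation of \cite[Lemma 6.1]{H}.
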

\begin{proof}
As $p_j=\max\{p_k; 1 \le k \le h\}$, we have $A^j(\o^l \L) \subseteq l h p_j + A^j(\L)$ for $l \in \ZZ_{\ge 0}$.

(1) By \cite[Lemma 6.1]{H}, there exists $r \gg 0$ such that $\o_i^r \D^i \subseteq \L$ for $\L \in \cl(A(\bc))$. Since $p_i=0$, $\o^l \o_i^r \D^i = \o_i^r \D^i$. Thus, if $l \gg 0$, we have $\o^l \o_i^r \D^i \subseteq \o^l \L$ and $\bar A^i(\o_i^r \D^i) \le l h p_j + \bar A^j(\L)$. So $\bar A^i(\o_i^r \D^i) \le \bar A^j(\o^l \L)$ since $A^j(\o^l \L) \subseteq l h p_j + A^j(\L)$. This means $A^i(\o^l \bc) \ll A^j(\o^l \bc)$ as desired.

(2) Let $\o' \in \Omega_\g$ such that  $\bar A(\o' \D^i) \le \bar A^j(\bc)$ and $\o' \D^i \subseteq \L$ for $\L \in \bc$. Notice that $\bar A(\o \o' \D^i)=h p_i + \bar A(\o' \D^i)$. So $\o \o' \D^i \subseteq \o \L$ and $$\bar A(\o \o' \D^i) = h p_i + \bar A(\o' \D^i) \le p_j h + \bar A^j(\L),$$ which means $\bar A(\o \o' \D^i) \le \bar A^j(\o\L)$ and hence $A^i(\o \bc) \ll A^j(\o \bc)$ as desired.
\end{proof}

\begin{cor}
Let $\bc \in \Irr X_\mu(\g)$. Then there exist $j \in \Omega_\g$ and an ordered semi-module $A \in \ca_\mu^\tp$ such that $j \bc \in \Irr \cl(A)$.
\end{cor}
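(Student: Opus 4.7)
The plan is to construct the required element of $\Omega_\g$ iteratively, processing stages $s = h, h-1, \ldots, 2$ in this (decreasing) order; at stage $s$ we apply $\omega^{(s)} := (\omega_s \omega_{s+1} \cdots \omega_h)^{l_s}$ for sufficiently large integers $l_s$. Setting $\bc_{h+1} := \bc$ and, inductively for $s = h, h-1, \ldots, 2$, $\bc_s := \omega^{(s)} \bc_{s+1}$, we show by downward induction on $s$ that $\bar A^i(\bc_s) \ll \bar A^j(\bc_s)$ for every pair $1 \le i < j \le h$ with $j \ge s$. After the final stage $s = 2$, this covers all pairs $i < j$, so $\bar A^1(\bc_2) \le \cdots \le \bar A^h(\bc_2)$ and $A(\bc_2)$ is ordered. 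Taking the element of the lemma to be $\omega^{(2)} \omega^{(3)} \cdots \omega^{(h)} \in \Omega_\g$ and $A := A(\bc_2) \in \ca_\mu^\tp$ then completes the proof.

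At stage $s$, the base element $\omega_s \omega_{s+1} \cdots \omega_h$ has multiplicities $p_i = 0$ for $i < s$ and $p_i = 1$ for $i \ge s$; after raising to the power $l_s$ these become $0$ and $l_s$ respectively, with $l_s$ the maximum, attained precisely at $\{s, s+1, \ldots, h\}$. For the new relations, Lemma \ref{order}(1) applied with any $i \in \{1, \ldots, s-1\}$ (so $p_i = 0$) and any $j \in \{s, \ldots, h\}$ (so $p_j$ is maximal), for $l_s$ sufficiently large, yields $\bar A^i(\bc_s) \ll \bar A^j(\bc_s)$. For the relations carried over from stage $s+1$, which are of the form $\bar A^{i'}(\bc_{s+1}) \ll \bar A^{j'}(\bc_{s+1})$ with $j' \ge s+1$: since $j' \ge s+1 > s$, the multiplicity $p_{j'} = l_s$ is maximal in $\omega^{(s)}$, so Lemma \ref{order}(2) applies directly to yield $\bar A^{i'}(\bc_s) \ll \bar A^{j'}(\bc_s)$.

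The crucial design choice is that the stages are processed in decreasing order of $s$, so that at each stage the indices $j'$ of previously established pairs satisfy $j' \ge s+1$, placing them within the range where $p_{j'}$ is maximal in $\omega^{(s)}$---the exact hypothesis of Lemma \ref{order}(2). Without this ordering, one would need to preserve pairs with $p_{j'}$ not maximal, which Lemma \ref{order}(2) does not cover directly. The remaining verifications (transitivity of $\le$ to obtain the ordered chain, and $A(\bc_2) \in \ca_\mu^\tp$ by virtue of $\bc_2 \in \Irr X_\mu(\g)$) are routine.
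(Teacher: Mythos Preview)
Your proof is correct and follows essentially the same approach as the paper's: a downward induction on $s$, applying a large power of $\omega_s\omega_{s+1}\cdots\omega_h$ at each stage, with Lemma~\ref{order}(1) producing the new relations and Lemma~\ref{order}(2) preserving the previously established ones. The only cosmetic difference is that your inductive hypothesis records all pairs $(i,j)$ with $j\ge s$, whereas the paper records the chain $\bar A^k\ll\cdots\ll\bar A^h$ together with $\bar A^i\ll\bar A^k$ for $i<k$; the execution is identical.
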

\begin{proof}
We argue by induction on $k$ that there exist $\o \in \Omega_\g$ such that $\bar A^i(\o \bc) \ll \bar A^k(\o \bc) \ll \cdots \ll \bar A^h(\o \bc)$ for $1 \le i \le k-1$. If $k=h+1$, there is nothing to prove. Suppose the statement is true for $k=j+1$ for some $1 \le j \le h$. We show it is also true for $k=j$. By induction hypothesis, there exists $\o' \in \Omega_\g$ such that $\bar A^i(\o' \bc) \ll \bar A^{j+1}(\o' \bc) \ll \cdots \ll \bar A^h(\o' \bc)$ for $1 \le i \le j$. Let $\o''=\o_j \cdots \o_h \in \Omega_\g$. Then Lemma \ref{order} tells that for $l \gg 0$ one has $\bar A^i((\o'')^l \o' \bc) \ll \bar A^j((\o'')^l \o' \bc) \ll \cdots \ll \bar A^h((\o'')^l \o' \bc)$ for $1 \le i \le j-1$. So the induction is finished.
\end{proof}

\subsection{} \label{rigid} Let $A$ be an ordered semi-module of Hodge type $\mu$. We fix $\iota \in \ZZ_d$.
\begin{lem} \label{sepa}
Let $x \in \cl(A, \iota)$ and let $(v(a))_{a \in A}$ be the corresponding normalised basis. For $1 \le k \le h$ and $a \in A^k$ we have $$v(a) \in  \sum_{i=0}^{h-k} \sum_{j=0}^\infty \kk e_{a+i+jh}.$$
\end{lem}
\begin{proof}
The statement follows from the following two facts:

(1) $A$ is ordered and hence $\bar A^1 \le \bar A^2 \le \cdots \le \bar A^h$;

(2) for $1 \le i \le h$ and $b \in \bar A^i$ we have $(b, j) \notin D(A, \iota)$ if $\bar A^i > \bar A^{i+j}$.
\end{proof}

For $1 \le k \le h$ we set $y_\iota^k=\max \bar A_\iota^k$ and $s=n'd=|\bar A^k|$. For $\xi=(\xi_i)_{1 \le i \le h-k} \in \FF_{q^s}^{h-k}$, we define $n_{\iota, k, \xi} \in \JJ_\g$ by $e_a \mapsto e_a$ if $a \notin O^k$ and $e_a \mapsto  e_a + \sum_{i=1}^{h-k} \xi_i^{q^l} e_{a+i}$ if $a \in f^l(y_\iota^k)+n\ZZ$ for some $l \in \ZZ$.
\begin{lem} \label{piece}
If $\bar A^{k+1} \ge h + \bar A^k$, then $(y_\iota^k, i) \in W(A, \iota)$ for $1 \le i \le h-k$ and $x_{y_\iota^k, i} \in \FF_{q^s}$ for each $x \in \cl(A, \iota)$. As a consequence, $$\cl(A, \iota) = \sqcup_{\xi\in \FF_{q^s}^{h-k}} \cl(A, \iota, k, \xi),$$ where $\cl(A, \iota, k, \xi)=\{x \in \cl(A, \iota); x_{y_\iota^k, i}=\xi_i \ {\rm for } \ 1 \le i \le h-k\}$. Moreover, $$n_{\iota, k, \xi'} \cl(A, \iota, k, \xi) = \cl(A, \iota, k, \xi+\xi').$$
\end{lem}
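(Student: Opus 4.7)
The plan is to prove the four assertions in sequence, using the recursive description of the normalised basis from Lemma~\ref{exist} together with the refinement in Lemma~\ref{sepa}. First I would show that $(y_\iota^k, i) \in W(A, \iota)$ for $1 \le i \le h-k$. Since $y_\iota^k \in Y_\iota$, this reduces to the claim $y_\iota^k + i \notin A$: combining the hypothesis $\bar A^{k+1} \ge h + \bar A^k$ with the ordering $\bar A^{k+1} \le \cdots \le \bar A^h$, one deduces that any element of $\bar A^{k+i}_\iota$ is too large to produce $y_\iota^k + i$ via a non-negative shift by $n$, so $y_\iota^k + i \notin A^{k+i}_\iota \subseteq A$.

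The main step is showing $x_{y_\iota^k, j} \in \FF_{q^s}$ for $x \in \cl(A, \iota)$ and $1 \le j \le h-k$, by analysing the polynomial $\b_{y_\iota^k, j}$ from the proof of Lemma~\ref{finite}. Since $(y_\iota^k, i) \notin V(A)$ for $1 \le i \le h-k$ by part (1), the recurrence for $\a_{y_\iota^k, j}$ in Lemma~\ref{exist} collapses to $\a_{y_\iota^k, j} = X_{y_\iota^k, j}$. Moreover $y_\iota^k$, being the unique element of $Y_\iota$ in its $r_A$-orbit, is $\preceq_\iota$-maximal within that orbit, so $\{c \in \bar A: y_\iota^k \prec_\iota c\}$ is empty. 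These two facts, combined with the Artin-Schreier analysis of Lemma~\ref{finite}, force $\b_{y_\iota^k, j}$ to take the form $X_{y_\iota^k, j}^{q^s} - X_{y_\iota^k, j} + \d_{y_\iota^k, j}$ with $\d_{y_\iota^k, j} \in P_{j-1}$. Induction on $j$, together with Lemma~\ref{sepa} to restrict attention to coefficients at residues $\ge k$, shows that $\d_{y_\iota^k, j}$ evaluates to $0$ at every $x \in \cl(A, \iota)$; the equation $\b_{y_\iota^k, j}(x) = 0$ then becomes the pure Artin-Schreier equation $x_{y_\iota^k, j}^{q^s} = x_{y_\iota^k, j}$, placing $x_{y_\iota^k, j} \in \FF_{q^s}$. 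I expect this to be the main technical obstacle, since it requires a careful argument that the cross-terms in $\d_{y_\iota^k, j}$ involving $X_{c, i}$ for $c \ne y_\iota^k$ drop out under the hypothesis.

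The decomposition $\cl(A, \iota) = \sqcup_\xi \cl(A, \iota, k, \xi)$ is then immediate, since the map $x \mapsto (x_{y_\iota^k, i})_{1 \le i \le h-k}$ is a well-defined function $\cl(A, \iota) \to \FF_{q^s}^{h-k}$ whose level sets are by construction the subsets $\cl(A, \iota, k, \xi)$. For the equivariance $n_{\iota, k, \xi'} \cl(A, \iota, k, \xi) = \cl(A, \iota, k, \xi + \xi')$, I would take $x \in \cl(A, \iota, k, \xi)$ with normalised basis $(v(a))_{a \in A}$ and verify that the translated vectors $(n_{\iota, k, \xi'} v(a))_{a \in A}$ form the normalised basis of the lattice $n_{\iota, k, \xi'} \L(x)$ for a point $x' \in \cl(A, \iota, k, \xi + \xi')$ with $x'_{y_\iota^k, i} = \xi_i + \xi'_i$. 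The Frobenius-twisted rule $e_a \mapsto e_a + \sum_i (\xi'_i)^{q^l} e_{a+i}$ at positions $a \in f^l(y_\iota^k) + n\ZZ$ is precisely calibrated so that relations (0)-(4) continue to hold for the translated basis, and Lemma~\ref{unique} then yields the asserted equality of subsets.
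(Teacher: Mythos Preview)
Your overall plan is sound, and the first, third, and fourth parts are essentially correct. But the core step---showing $x_{y_\iota^k, j} \in \FF_{q^s}$---contains a genuine error and misses the key simplification that the paper exploits.

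First, the error: $y_\iota^k$ is $\preceq_\iota$-\emph{minimal} in its $r_A$-orbit, not maximal. The defining relation is $r_A^{-1}(b) \preceq_\iota b$ for $b \notin Y_\iota$, so the chain runs $y_\iota^k \preceq_\iota r_A(y_\iota^k) \preceq_\iota \cdots \preceq_\iota r_A^{s-1}(y_\iota^k)$. Hence $\{c \in \bar A : y_\iota^k \prec_\iota c\}$ is all of $\bar A^k \setminus \{y_\iota^k\}$, not empty. The conclusion you want---that no $(c,j)$ with $c \succ_\iota y_\iota^k$ lies in $V(A)$---is still true, but for the reason that $c+j \notin A$ for every $c \in \bar A^k$ and $1 \le j \le h-k$, which is a stronger fact than you have recorded.

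That stronger fact is exactly the paper's key observation, and it dissolves your anticipated ``technical obstacle'' entirely. Since $A$ is ordered and $\bar A^{k+1} \ge h + \bar A^k$, one has $b+i \notin A$ for \emph{every} $b \in \bar A^k$ and $1 \le i \le h-k$ (not just $b = y_\iota^k$). Consequently $(b,i) \notin D(A,\iota)$ for $b \in \bar A^k \setminus \{y_\iota^k\}$, so in the recursion $(*)$ of Lemma~\ref{exist} all the sums over $V(A)$ vanish and one gets $\a_{b,i} = X_{y_\iota^k,i}^{q^l}$ \emph{exactly} whenever $b \in f^l(y_\iota^k)+n\ZZ$. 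Condition (4') at $y_\iota^k$, together with Lemma~\ref{sepa}, then gives $x_{y_\iota^k,i}^{q^s} - x_{y_\iota^k,i} = 0$ on the nose: the remainder $\d_{y_\iota^k,j}$ is identically zero as a polynomial, so there are no cross-terms to kill and no induction on $j$ is needed.
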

\begin{proof}
As $A$ is ordered and $\bar A^{k+1} \ge h + \bar A^k$, we have $b+i \notin A$ for $b \in \bar A^k$ and $1 \le i \le h-k$. So $(y_\iota^k, i) \in W(A, \iota)$ if $b=y_\iota^k$ and $(b, i) \notin D(A, \iota)$ if $b \in \bar A^k \setminus \{y_\iota^k\}$. Then Lemma \ref{exist} (*) tells that $\a_{b, i}=X_{y_\iota^k, i}^{q^l}$ if $b \in f^l(y_\iota^k) + n \ZZ$ for some $0 \le l \le s-1$. In view of Lemma \ref{sepa} and the requirement \S\ref{equation} (4'), we deduce that $x_{y_\iota^k, i} \in \FF_{q^s}$ as desired.
\end{proof}

\begin{lem} \label{tight}
Let $1 \le  k \le h-1$ such that $\bar A^{k+1} \ge h + \bar A^k$. For $\L \subseteq \cl(A, \iota, k, 0)$ we have $\bar A^l(\o_k \L)=h + \bar A^l(\L)$ if $\bar A^l=\bar A^k$ and $\bar A^l(\o_k \L)=\bar A^l(\L)$ otherwise. Here $\o_k \in \Omega_\g$ is defined in \S\ref{omega}.
\end{lem}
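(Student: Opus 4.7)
The plan is to work with the normalized basis $(v(a))_{a \in A}$ of $\L \in \cl(A, \iota, k, 0)$ from \S\ref{equation}, transport it by $\o_k$, and read off $\bar A^l(\o_k \L)$ directly from the heights of $(\o_k v(a))_{a \in A}$.

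First I would dispose of the cases where $\o_k$ does not move the head. By Lemma \ref{sepa}, for $a \in A^l$ the vector $v(a)$ lies in $\sum_{i=0}^{h-l} \sum_{j \ge 0} \kk e_{a + i + jh}$, whose positions carry residues in $\{l, l+1, \dots, h\}$ modulo $h$. If $l > k$, then residue $k$ is absent, so $\o_k v(a) = v(a)$ and $\h(\o_k v(a)) = a$. If $l < k$, the residue-$k$ positions do get shifted, but the leading term $e_a$ (residue $l$) is preserved, and again $\h(\o_k v(a)) = a$.

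The decisive case is $l = k$. Here $\o_k$ moves the leading $e_a$ to $e_{a + h}$, while the coefficients $\a_{a, i}$ at $e_{a + i}$ for $1 \le i \le h - k$ are preserved. The head of $\o_k v(a)$ lands at $a + h$ exactly when all these coefficients vanish. The key claim to establish is that $\L \in \cl(A, \iota, k, 0)$ forces $\a_{a, i} = 0$ for every $a \in \bar A^k$ and every $1 \le i \le h - k$. The hypothesis $\bar A^{k+1} \ge h + \bar A^k$ combined with $A$ ordered gives $a + i \notin A$ for all such $(a, i)$ (as in the proof of Lemma \ref{piece}), so $(a, i) \notin V(A)$, and $(a, i) \in W(A, \iota)$ only when $a = y_\iota^k$. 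For $a = y_\iota^k$, the first case of Lemma \ref{exist}(*) collapses to $\a_{y_\iota^k, i} = X_{y_\iota^k, i}$, which vanishes on $\cl(A, \iota, k, 0)$. For $a \in \bar A^k \setminus Y_\iota$, the third case collapses to $\a_{a, i} = \a_{r_A\i(a), i}^q$; iterating along the $r_A$-orbit of $a$ inside $\bar A^k$, which contains $y_\iota^k$, then propagates the vanishing.

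Granting the claim, $(\o_k v(a))_{a \in \bar A_\t}$ is an $\co$-basis of $\o_k \L_\t$ with mutually distinct heights $\{a : a \in \bar A_\t \setminus \bar A^k_\t\} \cup \{a + h : a \in \bar A^k_\t\}$. Closing under $t$-multiplication identifies $A(\o_k \L) = (A \setminus A^k) \cup (h + A^k)$, and subtracting $n + A(\o_k \L)$ yields $\bar A^l(\o_k \L) = \bar A^l$ for $l \ne k$ and $\bar A^k(\o_k \L) = h + \bar A^k$, matching the statement (noting that $\bar A^l = \bar A^k$ forces $l = k$, since $\bar A^l \subseteq O^l$ and $\bar A^k \subseteq O^k$ are disjoint for $l \ne k$). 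The main obstacle is the combinatorial input in the key claim, namely the transitivity statement that the $r_A$-orbit of every $a \in \bar A^k$ passes through $y_\iota^k$; this should follow from the basic-minuscule structure and the orbit geometry of $f$ on $O$, but it is the only piece not immediate from Lemmas \ref{sepa} and \ref{exist}.
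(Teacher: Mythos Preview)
Your proof is correct and essentially matches the paper's: both derive the key vanishing $\a_{a,i}=0$ (for $a\in\bar A^k$, $1\le i\le h-k$) from Lemmas~\ref{sepa} and~\ref{piece} and then read off $A(\o_k\L)$ from the heights of the transported basis. Your flagged obstacle is not a gap---the transitivity of $r_A$ on $\bar A^k$ is exactly what is used (and established, via $\gcd(m',n')=1$) in the proof of Lemma~\ref{piece}---and the only cosmetic difference is that the paper checks the reverse inclusion by computing $\h(\o_k v)$ for arbitrary $v\in\L$ rather than invoking your distinct-residues-mod-$n$ basis argument.
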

\begin{proof}
Set $A=A(\L)$. Let $(v(a))_{a \in A}$ be the normalised basis for $\L \in \cl(A, \iota, k, 0)$. By Lemma \ref{sepa} and Lemma \ref{piece} we have \begin{align*} v(a) \in  e_a + \sum_{i=0}^{h-k} \sum_{j=1}^\infty \kk e_{a+i+jh}  \text { for } a \in A^k. \end{align*} By computing $\h(v(a))$ for $a \in A$, we have $h + A^k \subseteq A^k(\o_k \L)$ and $A^l \subseteq A^l(\o_k \L)$ if $A^l \neq A^k$. On the other hand, let $v \in \L$ such that $v = v(a) + \sum_{j=1}^\infty \b_j v(a+j)$ for some $\b_j \in \kk$. We can assume $\b_j=0$ if $a+j \notin A$. If $a \in A^l \neq A^k$, then $\h(\o_k(v))=\h(v)=a \in A^l$. Assume $a \in A^k$. Then $\h(\o_k(v))=j_0+a$ for some $1 \le j_0 \le h$. If $j_0=h$, then $\h(\o_k(v))=h+a \in h + A^k$. If $h-k+1 \le j_0 \le h-1$, then $\bar A^k \ge \bar A^{k+j_0}$ and hence $\h(\o_k(v))=a+j_0 \in A^{k + j_0}$. If $1 \le j_0 \le h-k$, then $\b_{j_0} \neq 0$ (as $\a_{a, i}=0$ for $1 \le i \le h-k$) and hence $\h(\o_k(v))=a+j_0 \in A^{k+ j_0}$. Therefore, we have $A^k(\o_k \L)= h + A^k$ and $A^l(\o_k \L) = A^l$ if $A^l \neq A^k$.
\end{proof}

We say $A$ is {\it rigid} if $\bar A^k \le \bar A^{k+1}$ and $\bar A^k + h \nleqslant \bar A^{k+1}$ for each $1 \le k \le h-1$. By Lemma \ref{order}, Lemma \ref{piece} and Lemma \ref{tight} we deduce that
\begin{cor} \label{to-rig}
Let $\bc \in \Irr X_\mu(\g)$. Then there exist $j \in \JJ_\g$ and a rigid semi-module $A$ such that $j \bc \in \Irr \cl(A)$.
\end{cor}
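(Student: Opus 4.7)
The plan is to start from an ordered representative of the $\JJ_\g$-orbit of $\bc$, which is supplied by the corollary immediately preceding the statement, and then iteratively shrink the jumps between consecutive strata $\bar A^k$ and $\bar A^{k+1}$ via the shift operation from Lemma \ref{tight}, until the resulting semi-module becomes rigid.

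After replacing $\bc$ by $j_0\bc$ for some $j_0\in\Omega_\g$, I may assume $\bc\in\Irr\cl(A)$ with $A\in\ca_\mu^\tp$ ordered. Fix any $\iota\in\ZZ_d$ and, for $1\le k\le h-1$, set
\[
N_k(A)=\max\bigl\{\,j\ge 0 : \bar A^k+jh\le \bar A^{k+1}\,\bigr\},
\]
so that $A$ is rigid precisely when $N_k(A)=0$ for every $k$. If some $N_k(A)\ge 1$, then the hypothesis of Lemma \ref{piece} holds at $k$ and yields a clopen decomposition $\cl(A,\iota)=\sqcup_\xi \cl(A,\iota,k,\xi)$ whose pieces are permuted by the elements $n_{\iota,k,\xi}\in\JJ_\g$. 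Since $\bc$ is irreducible, it meets a unique piece $\cl(A,\iota,k,\xi_0)$ in an open dense subset, and $n_{\iota,k,-\xi_0}\bc\in\Irr\cl(A,\iota,k,0)$. Now apply $\o_k\in\Omega_\g\subseteq\JJ_\g$: by Lemma \ref{tight}, the resulting irreducible component lies in $\Irr\cl(A')$, where $A'$ is obtained from $A$ by replacing $\bar A^l$ by $\bar A^l+h$ for every $l$ with $\bar A^l=\bar A^k$ and leaving all other strata unchanged; a direct check shows $A'$ is again ordered and of Hodge type $\mu$, hence lies in $\ca_\mu^\tp$.

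To force termination I introduce the weighted measure $N(A)=\sum_{l=1}^{h-1} l\cdot N_l(A)$. A short computation shows that the shift at $k$ decreases $N_k$ by $1$ and, if $k_0:=\min\{l:\bar A^l=\bar A^k\}\ge 2$, increases $N_{k_0-1}$ by $1$, while all other $N_l$ are unchanged; since $k_0-1<k$, the net change in $N(A)$ is at most $-1$. Because $N(A)\in\ZZ_{\ge 0}$, the process ends after finitely many steps at a rigid semi-module, yielding the desired $j\in\JJ_\g$. The main obstacle is the choice of termination measure: the naive count $\sum_l N_l(A)$ is in fact preserved by the shift, because shrinking the gap at $k$ enlarges the gap at $k_0-1$ by exactly the same amount, so one must use the weighted sum $\sum_l l\cdot N_l(A)$ to exploit the fact that every shift transfers its contribution to a strictly smaller index.
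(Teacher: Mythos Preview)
Your argument is correct and follows exactly the route the paper indicates (``By Lemma~\ref{order}, Lemma~\ref{piece} and Lemma~\ref{tight}''): pass first to an ordered representative, then iterate Lemma~\ref{piece} to reach the $\xi=0$ piece and Lemma~\ref{tight} to apply $\o_k$, shrinking the gap at $k$. The paper leaves termination implicit; your weighted measure $N(A)=\sum_l l\,N_l(A)$ is a clean way to make it explicit (note your remark that $\sum_l N_l(A)$ is ``preserved'' is only literally true when $k_0\ge 2$, but that is precisely the case where a strictly decreasing quantity is needed).
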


\

For $1 \le k \le h-1$ we define $s_k \in \JJ_\g$ by \begin{align*} s_k(e_a) = \begin{cases} e_{a-1}, & \text{ if }   a \in O^{k+1}; \\ e_{a+1}, & \text{ if } a \in O^k; \\ e_a, & \text{ otherwise.}  \end{cases}  \end{align*} We denote by $\fs_\g \subseteq \JJ_\g$ the subgroup generated by $s_k$ for $1 \le k \le h-1$. Then $\fs_\g$ is isomorphic to the symmetry group of $h$ letters.

\begin{lem} \label{fix}
If $A$ is rigid, then $\fs_\g$ preserves $\Irr \cl(A)$.
\end{lem}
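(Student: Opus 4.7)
The group $\fs_\g$ is generated by the involutions $s_1,\dots,s_{h-1} \in \JJ_\g$, so $\fs_\g$ acts on $X_\mu(\g)$ by left multiplication. Hence it suffices to show that each generator $s_k$ carries every $\bc \in \Irr\cl(A)$ to an element of $\Irr\cl(A)$; equivalently, the closure of $s_k \bc$ in $X_\mu(\g)$ equals the closure of some $\bc' \in \Irr\cl(A)$.

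Fix $\iota \in \ZZ_d$ and analyse $s_k$ through the parametrization $\cl(A)=\cl(A,\iota) \subseteq \AA^{D(A,\iota)}$ from \S\ref{equation}. For $x \in \cl(A,\iota)$ with normalized basis $(v(a))_{a \in A}$ of $\L(x)$, Lemma \ref{sepa} constrains the support of $v(a)$: for $a \in \bar A^k$ it lies in $a + \bigcup_{i=0}^{h-k}(i + h\ZZ_{\ge 0})$, and symmetrically (with $h-k$ replaced by $h-k-1$ plus a shift by $-1$ in the leading layer) for $a \in \bar A^{k+1}$. The map $s_k$ swaps $e_{(\t,i)} \leftrightarrow e_{(\t, i+1)}$ whenever $i \equiv k \bmod h$, thereby reshuffling these supports between layers $k$ and $k+1$. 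I would then reassemble the images $s_k v(a)$ (for $a \in \bar A^k \cup \bar A^{k+1}$) into a new normalized basis expressing $s_k \L(x)$ as $\L(x')$ for some $x' \in \cl(A,\iota)$, thereby producing an algebraic involution $\phi \colon \cl(A,\iota) \to \cl(A,\iota)$ that realizes the action of $s_k$ at the level of parameters. An involution necessarily permutes the irreducible components of its source, so this gives the desired action of $\fs_\g$ on $\Irr\cl(A)$.

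The main obstacle is the combinatorial/algebraic verification that, under rigidity, the new heights $\h(s_k v(a))$ (after renormalization) genuinely lie in $\bar A$ and that the resulting parameter $x'$ satisfies the defining equations (0)--(4) of $\cl(A,\iota)$. I expect this to proceed by induction on the partial order $\preceq_\iota$ used in the proof of Lemma \ref{exist}. The essential role of rigidity is to block the ``shift by $h$'' behaviour exploited in Lemma \ref{piece}: if instead $\bar A^k + h \le \bar A^{k+1}$, then after applying $s_k$ some $\h(s_k v(a))$ could be pushed up by $h$ and the new lattice would leave $\cl(A)$; the hypothesis $\bar A^k + h \nleqslant \bar A^{k+1}$ precisely excludes this, forcing $s_k$ to stabilise $\cl(A,\iota)$ (possibly after absorbing elements of $\Omega_\g$ that fix each irreducible component, as in the analysis leading to Lemma \ref{tight}).
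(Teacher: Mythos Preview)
Your plan hinges on producing a \emph{globally defined} involution $\phi\colon \cl(A,\iota)\to\cl(A,\iota)$ realising $s_k$. This is too strong and is where the argument breaks. The paper does not claim $s_k\,\cl(A)\subseteq\cl(A)$; it only shows that $s_k$ carries a \emph{dense open} subset of $\cl(A)$ into $\cl(A)$, which is exactly what is needed to permute $\Irr\cl(A)$. Concretely, the proof splits into two cases. If $\bar A^{k+1}=1+\bar A^k$ one checks directly that $A(s_k\L(x))=A$ for every $x$, so $s_k\cl(A)=\cl(A)$. Otherwise one first uses rigidity to produce $y\in\bar A^k$ with $y+1\in\bar A^{k+1}$ and $(y,1)\in V(A)$; this makes the coefficients $\a_{b,1}$ (for $b\in\bar A^k$) nonzero polynomials in the parameters, so the locus $U'=\{x:\a_{b,1}(x)\neq0\ \text{for all }b\in\bar A^k\}$ is open dense. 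On $U'$ one can subtract a suitable multiple $t^{i_0}\a_{c,1}^{-1}v(c)$ from $v(a)$ (for $a\in A^{k+1}$) before applying $s_k$, and then verify $A(s_k\L(x))=A$. Off $U'$ these corrections are unavailable; if all $\a_{b,1}(x)=0$ then $\h(s_k v(a))=a-1$ for $a\in\bar A^{k+1}$, and since $\bar A^{k+1}\neq 1+\bar A^k$ some $a-1$ lies outside $A$, so $s_k\L(x)\notin\cl(A)$. Thus no everywhere-defined $\phi$ exists in general.

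Your reading of rigidity is also off target. The condition $\bar A^k+h\nleqslant\bar A^{k+1}$ is not used to ``block a shift by $h$'' under $s_k$ (which only shifts indices by $\pm1$). Its actual role is the existence statement above: since $A$ is ordered and $\bar A^k+h\nleqslant\bar A^{k+1}$, there is some $b\in\bar A^k$ with $b+1\in\bar A^{k+1}$, and then one argues (by following the $r_A$-orbit) that one may take such a $b$ with $\varphi_A(b)>\varphi_A(b+1)$, i.e.\ $(b,1)\in V(A)$. That is what forces $U'$ to be dense, and it is the only place rigidity enters the proof. Your suggestion of ``absorbing elements of $\Omega_\g$ that fix each irreducible component'' does not help here either: $\Omega_\g$ generally moves the strata (that is precisely how Lemma~\ref{tight} operates), so it cannot be used to repair $\phi$ on the bad locus.
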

\begin{proof}
It suffices to show $s_k$ preserves $\Irr \cl(A)$ for each $1 \le k \le h-1$.

\

Case(1): $\bar A^{k+1} = 1 + \bar A^k$.

We claim that $s_k \cl(A)=\cl(A)$. Indeed, let $\t \in \ZZ_d$ and $x \in \cl(A, \t)$. Let $v(a)_{a \in A}$ be the normalized basis for $x$. Since $\bar A^k \le \bar A^{k+1} = 1 + \bar A^k$, we have $h=n$ and $\varphi_A(b+1)=\varphi_A(b)$ for $b \in \bar A^k$. Thus $(\bar A^k, 1) \cap D(A, \t) = \emptyset$ and $v(a) \in e_a + \sum_{j=2}^\infty \kk e_{a+j}$ for $a \in A^k$. In particular, \begin{align*}\h(s_k(v(a)))=\begin{cases} a+1, & \text{ if } a \in A^k; \\ a-1, & \text{ if } a \in A^{k+1}; \\ a, & \text{ otherwise.}  \end{cases}\end{align*} Therefore, $A=A(\L(x)) \subseteq A(s_k \L(x))$. On the other hand, let $v \in \L(x)$ and $a=\h(v) \in A$. If $a \in A \setminus (A^k \cup A^{k+1})$, then $\h(s_k(v))=a \in A$. Otherwise, $\h(s_k(v))=a-1 \in A^k$ if $a \in A^{k+1}$ and $\h(s_k(v)) \in \{a, a+1\} \subseteq A^k \cup A^{k+1}$ if $a \in A^k$. Therefore, $A(s_k \L(x))=A$ and the claim is proved.

\

Case(2): $\bar A^{k+1} \neq 1 + \bar A^k$.

First we claim that

(i) there exist $\iota_0 \in \ZZ_d$ and $y \in \bar A_{\iota_0}^k$ such that $y+1 \in \bar A_{\iota_0}^{k+1}$ and $(y, 1) \in V(A)$. In particular, $y=\max \bar A_{\iota_0}^k$ as $\bar A_{\iota_0}^{k+1} \ge \bar A_{\iota_0}^k$.

Indeed, since $A$ is rigid, there exists $b \in \bar A^k$ with $b+1 \in \bar A^{k+1}$. Assume (i) fails. Then $(b, 1) \notin V(A)$, that is, $\varphi_A(b) \le \varphi_A(b+1)$. If $\varphi_A(b) < \varphi_A(b+1)$, then $r_A(b) > r_A(b+1)$, contradicting that $\bar A^{k+1} \ge \bar A^k$. So we have $\varphi_A(b) = \varphi_A(b+1)$ and hence $r_A(b)+1=r_A(b+1)$. Repeating this argument, we deduce that $r_A^l(b)+1=r_A^l(b+1)$ and $\varphi_A(r_A^l(b))=\varphi_A(r_A^l(b+1))$ for $l \in \ZZ$. This means $\bar A^{k+1} = 1 + \bar A^k$, contradicting our assumption. So (i) is proved.

By (i) we have $(y, 1) \in V(A)$ and hence all the coefficients $\a_{b, 1}$ for $b \in \bar A^k$ are non-zero polynomials in $\kk[X_{a, 1}; (a, 1) \in V(A), a \in A^k]$. Let $$U=\{x \in \AA^{V(A)}; \a_{b, 1}(x) \neq 0 \text{ for } b \in \bar A^k \},$$ which is an open dense subset of $\AA^{V(A)}$. Let $U' \subseteq \cl(A, \iota_0)$ be the preimage of $U$ under the natural projection $\cl(A, \iota_0) \to \AA^{V(A)}$. By Lemma \ref{finite}, $U'$ is open dense in $\cl(A, \iota_0)=\cl(A)$. Let $x \in U'$ and let $(v(a)_{a \in A})$ the corresponding normalized basis. By definition, \begin{align*} v(a) \in  e_a + \kk^\times e_{a+1} + \sum_{k=2}^\infty \kk e_{a+k} \text{ for } a \in A^k.  \end{align*} In particular, $\h(s_k(v(a)))=a$ for $a \in A \setminus A^{k+1}$. Moreover, for each $a \in A^{k+1}$, there exists $c \in \bar A^k$ such that $a \equiv c+1 \mod n$. Moreover, since $\bar A^{k+1} \ge \bar A^k$, $a-(c+1)=i_0 n$ for some $i_0 \in \ZZ_{\ge 0}$. Then $$v(a)- t^{i_0} \a_{c, 1}\i v(c) \in \a_{c, 1}\i e_{a-1} + \sum_{k=1}^\infty \kk e_{a+k}.$$ So $\h(s_k( v(a)- t^{i_0} \a_{c, 1}\i v(c)))=a$. Therefore, $A \subseteq A(s_k \L(x))$. On the other hand, let $v \in \L(x)$ such that $v = v(a) + \sum_{j=1}^\infty \b_j v(a+j)$ for some $\b_j \in \kk$. We can assume $\b_j=0$ if $a+j \notin A$. If $a \in A \setminus (A^{k+1} \cup A^k)$, then $\h(s_k(v))=a \in A$. If $a \in A^{k+1}$, then $\h(s_k(v))=a-1 \in A^k$. If $a \in A^k$, then $\h(s_k(v))$ equals either $a \in A^k$ or $a+1$. In the latter case, we have $\b_1 \neq 0$ (as $\a_{a, 1} \neq 0$) and hence $a+1 \in A^{k+1}$. Therefore, $A(s_k \L(x))=A$ and hence $s_k (U') \subseteq \cl(A)$. The proof is finished.
\end{proof}

\section{Orbits of irreducible components} \label{irr}
Let notations be as in \S\ref{semi}. Let $M = \prod_{1 \le k \le h} M^k$ with each $M^k \cong \Res_{\FF_{q^d} / \FF_q} GL_{n'}$. Moreover precisely, $M^k(L) = \prod_{\t \in \ZZ_d} GL(N_\t^k)$, where $N_\t^k=\oplus_{j=1}^{n'} L e_{\t, k+j h}$. Then $M \supseteq T_H$ is a semi-standard Levi subgroup of $H$. Notice that $\g$ is superbasic in $M(L)$.

\subsection{} We say $C=(C^k)_{1 \le k \le h}$ is a semi-module for $M$ if $C^k \subseteq O^k$ is bounded below, $n + C^k, f(C^k) \subseteq C^k$ and $n\ZZ + C^k = O^k$ for $1 \le k \le h$. Set $\bar C = C \setminus (n + C)$ and define $\varphi_C: C \to \ZZ$ and $r_C: \bar C \overset \sim \to \bar C$ analogously as in \S \ref{setup}. Let $\l=(\l_\t^k) \in X_*(T_M)=X_*(T_H)$ be an $M$-dominant cocharacter (with respect to the Borel subgroup $B_M= B_H \cap M$). We say a semi-module $C$ is of Hodge type $\l$ if $(\l_\t^k(k+ih))_{0 \le i \le n'-1}$ is a permutation of $(\varphi_C(b))_{b \in \bar C_{\t+1}^k}$ for $1 \le k \le h$ and $\t \in \ZZ_d$.

Let $I_{\mu, \g} = I_{\mu, \g, M}$ be the set of $M$-dominant cocharacters $\l$ such that $\l$ is conjugate to $\mu$ under $W_H$ and $X_\l^M(\g) \neq \emptyset$. As $\mu$ is minuscule, we have $\l \in I_{\mu, \g}$ if and only if $\l \leq \mu$ is $M$-dominant and $m'=\sum_{\t \in \ZZ_d} \sum_{l=1}^{n'} \l_\t^k(k + l h)$ for $1 \le k \le h$.

Let $A$ be a semi-module for $H$ of Hodge type $\mu$. Let ${}^\g A:=(A^1, \dots, A^k)$ and let $\l_A$ be the $M$-dominant cocharacter such that $(\l_A)^k_\t(k+ih)_{0 \le i \le n'-1}$ is a permutation of $(\varphi_A(b))_{b \in \bar A_{\t+1}^k}$ for $1 \le k \le h$ and $\t \in \ZZ_d$. Then ${}^\g A$ is a semi-module for $M$ of Hodge type $\l_A \in I_{\mu, \g}$.

Suppose $A$ is ordered. Let $\iota \in \ZZ_d$ and $x \in \cl(A, \iota)$. Let $(v(a))_{a \in A}$ denote the normalized basis for $x$. For $1 \le k \le h$ and $a \in A^k$ we set $v^k(a)=\sum_{i=0}^\infty \a_{a, ih} e_{a+ih}$. Since $A$ is ordered, Lemma \ref{sepa} tells that $(v^k(a))_{a \in A^k}$ is the normalized basis for $x^k \in \cl(A^k, \iota)$, where $x^k_{b, j}=x_{b, j}$ for $(b, j) \in D(A^k, \iota)=V(A^k):=\{(b, j) \in V(A); b, b+j \in A^k\}$. The map $x \mapsto (x^k)_{1 \le k \le h}$ gives a morphism $$\b_{A, \iota}: \cl(A)=\cl(A, \iota) \to \cl({}^\g A) = (\cl(A^1), \dots, \cl(A^h)) \subseteq X_{\l_A}^M(\g).$$ On the other hand, let $N \subseteq H$ be the unipotent subgroup such that $$N(L) e_{\t, i} = e_{\t, i} + \sum_{l=i+1}^h \sum_{k=0}^{n'-1} L e_{\t, l + k h}$$ for $1 \le i \le h$. Then $P := M N =N M \subseteq H$ is a semi-standard parabolic subgroup. The Iwasawa decomposition gives a natural projection $$\b_\g: H(L) / K = P(L) K / K  \to M(L) / M(\co).$$ Observing that $v(a) \in P(L) e_a$ for $a \in A$, we have
\begin{lem}
If $A$ is ordered, then $\b_{A, \iota}=\b_\g |_{\cl(A, \iota)} = \b_\g |_{\cl(A)}$. In particular, the morphism $\b_{A, \iota}$ is independent of the choice of $\iota \in \ZZ_h$.
\end{lem}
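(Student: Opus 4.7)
The plan is to realize $\L(x) = p \cdot \D$ for some $p \in P(L)$, read off its Levi component via the Iwasawa decomposition, and verify that it equals $\beta_{A,\iota}(x) = (\L_\t^k(x^k))_{\t,k}$. Independence of $\iota$ is then automatic since $\beta_\gamma$ does not reference $\iota$.

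For the construction, I would first note that Lemma \ref{sepa} combined with the ordered hypothesis forces $v(b) \in N_\t^{\ge k}$ for $b \in \bar A_\t^k$, with image $v^k(b)$ in the quotient $N_\t^{\ge k}/N_\t^{\ge k+1} \cong N_\t^k$, where $N_\t^{\ge k} := \bigoplus_{l \ge k} N_\t^l$. To assemble these into a single element of the group, for each $\t$ and $i \in \{1,\ldots,n\}$ let $c_{\t,i} \in \bar A_\t$ be the unique representative of the coset $(\t,i) + n\ZZ$, and define $p_\t \in \End_L(N_\t)$ by $e_{\t,i} \mapsto v(c_{\t,i})$ extended $L$-linearly. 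Since $\{v(b) : b \in \bar A_\t\}$ is an $L$-basis of $N_\t$, we get $p_\t \in GL(N_\t)$ with $p_\t \D_\t = \L_\t(x)$. Setting $k(i) = ((i-1) \bmod h) + 1$ so that $e_{\t,i} \in N_\t^{k(i)}$, we also have $c_{\t,i} \in \bar A_\t^{k(i)}$ (since $n = n'h$), hence $p_\t(e_{\t,i}) \in N_\t^{\ge k(i)}$; thus $p := (p_\t)_\t$ preserves the flag filtration and belongs to $P(L)$.

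Finally, writing $p = mn$ with $m \in M(L)$ and $n \in N(L)$, the image of $p_\t(e_{\t,i}) = v(c_{\t,i})$ in $N_\t^k = N_\t^{\ge k}/N_\t^{\ge k+1}$ is $v^k(c_{\t,i})$ whenever $k(i) = k$. As $i$ runs through $\{i : k(i) = k\}$, the $c_{\t,i}$ bijectively exhaust $\bar A_\t^k$, so $m_\t$ sends $\D_\t^k$ onto $\co\langle v^k(b) : b \in \bar A_\t^k \rangle = \L_\t^k(x^k)$. Therefore $\beta_\gamma(\L(x)) = m \D_M = (\L_\t^k(x^k))_{\t,k} = \beta_{A,\iota}(x)$, proving the first assertion; independence of $\iota$ follows. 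The essential use of the ordered hypothesis is in the inclusion $v(b) \in N_\t^{\ge k}$ for $b \in \bar A_\t^k$, without which $p$ would fail to lie in $P(L)$ and the Iwasawa argument would collapse; the rest is filtration bookkeeping, and I do not anticipate serious obstacles.
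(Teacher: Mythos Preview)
Your proof is correct and follows the same idea as the paper. The paper's argument is the single observation ``$v(a)\in P(L)e_a$ for $a\in A$'' (which is precisely the content of Lemma~\ref{sepa} rephrased in terms of the parabolic), from which the lemma is asserted without further comment; your argument is a careful unpacking of that observation, constructing the element $p\in P(L)$ explicitly and reading off its Levi part.
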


\subsection{} Let $\car^{\tp}_\mu$ be the set of rigid semi-modules $A$ of Hodge type $\mu$ such that $\dim \cl(A) = \dim X_\mu(\g)$. For $A', A'' \in \car^\tp$ we write $A' \sim A''$ if $A'=A'' + k h$ for some $k \in \ZZ$. We set $\tilde \car^{\tp}_\mu = \car_\mu^\tp / \sim$.

Analogously, for each $M$-dominant cocharacter $\l \leq \mu$ we denote by $\cc_\l^\tp$ the set of semi-modules $C$ for $M$ of Hodge type $\l$ such that $\dim \cl(C) =\dim X_\l^M(\g)$. Then $\cc_\l^\tp$ admits an action by $\Omega_\g$ (see \S \ref{omega}) such that $$(\o_1^{p_1} \cdots \o_h^{p_h}) (C^1, \dots, C^h)=(p_1 + C^1, \dots, p_h+ C^h).$$ Actually, we have $\o C(\L) = C(\o \L)$ for $\L \in X_\l^M(\g)$ and $\o \in \Omega_\g$. Here $C(\L)$ denotes the semi-module for $M$ associated to $\L$. Let $\tilde \cc_\l^\tp$ be the set of $\Omega_\g$-orbits in $\cc_\l^\tp$.

\begin{lem} \label{top}
Let $A$ be a rigid semi-module of Hodge type $\mu$. Then $A \in \car^\tp_\mu$ if and only if ${}^\g A=(A^1, \dots, A^h) \in \cc_{\l_A}^\tp$.
\end{lem}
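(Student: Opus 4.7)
The plan is to exploit the Iwasawa-decomposition morphism $\b_{A,\iota}: \cl(A) \to \cl({}^\g A)$ introduced just above, together with Lemma \ref{finite}. Decompose
\[
V(A) = V_{\text{diag}}(A) \sqcup V_{\text{off}}(A),
\]
where $V_{\text{diag}}(A) = \sqcup_{k=1}^h V(A^k)$ collects the intra-block pairs and $V_{\text{off}}(A)$ collects the pairs $(b,j) \in V(A)$ with $b$ and $b+j$ lying in different blocks $A^k, A^{k'}$. Lemma \ref{finite}, applied both to $\cl(A)$ and to each factor $\cl(A^k)$ of $\cl({}^\g A)$, yields $\dim \cl(A) = |V(A)|$ and $\dim \cl({}^\g A) = \sum_k |V(A^k)| = |V_{\text{diag}}(A)|$, whence
\[
\dim \cl(A) - \dim \cl({}^\g A) = |V_{\text{off}}(A)|.
\]
Since $\cl(A) \subseteq X_\mu^H(\g)$ and $\cl({}^\g A) \subseteq X_{\l_A}^M(\g)$, the biconditional $A \in \car_\mu^{\tp} \Longleftrightarrow {}^\g A \in \cc_{\l_A}^{\tp}$ reduces to the single identity $|V_{\text{off}}(A)| = \dim X_\mu^H(\g) - \dim X_{\l_A}^M(\g)$ for every rigid $A$ of Hodge type $\mu$.

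To verify this identity I would compute the right-hand side by the standard dimension formula $\dim X_\mu^H(\g) = \langle \rho_H, \mu - \nu_H(\g)\rangle - \tfrac{1}{2}\text{def}_H(\g)$, and likewise for $X_{\l_A}^M(\g)$. Because $\g$ is basic in $H$ and superbasic in $M$ the two Newton points coincide, $\nu_H(\g) = \nu_M(\g)$; writing $\rho_H - \rho_M = \rho_N$ for the half-sum of roots of the unipotent radical $N$ of $P = MN$, this yields
\[
\dim X_\mu^H(\g) - \dim X_{\l_A}^M(\g) = \langle \rho_N, \mu - \nu_H(\g)\rangle + \langle \rho_M, \mu - \l_A\rangle + \tfrac{1}{2}\bigl(\text{def}_M(\g) - \text{def}_H(\g)\bigr).
\]
For the left-hand side I would enumerate $V_{\text{off}}(A)$ block-pair by block-pair: for each $1 \le k < k' \le h$ and each $b \in \bar A^k$, the rigidity conditions $\bar A^k \le \bar A^{k+1}$ and $\bar A^k + h \nleqslant \bar A^{k+1}$ pin down exactly which $j \ge 1$ with $b+j \in \bar A^{k'}$ force $\varphi_A(b) > \varphi_A(b+j)$, and the total count should reorganize into the same expression in terms of $\mu$, $\l_A$ and $\nu_H(\g)$.

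The principal difficulty I foresee is this final combinatorial matching. A priori $|V_{\text{off}}(A)|$ depends on the precise positions of the $\bar A^k$, whereas the right-hand side depends only on $\l_A$; the iff therefore asserts that rigidity is exactly what makes $|V_{\text{off}}(A)|$ a function of $\l_A$ alone and identifies it with the displayed Kottwitz-type expression. I would aim for this in one of two ways: either (i) a direct computation using that $\mu$ is minuscule to expand $\mu - \l_A$ as a sum of simple coroots crossing block boundaries and match the coefficients block-pair by block-pair, or (ii) a Hodge-Newton / Mantovan-type argument producing a fibration from $X_\mu^H(\g)$ to $X_{\l_A}^M(\g)$ with uniform affine-space fibers over top-dimensional strata, whose relative dimension is then forced to agree with $|V_{\text{off}}(A)|$ by the very construction of $\b_{A,\iota}$.
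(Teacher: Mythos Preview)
Your strategy coincides with the paper's: decompose $V(A)=\sqcup_{i,j} V_{i,j}(A)$ into diagonal ($i=j$) and off-diagonal parts and reduce the biconditional to the single identity $|V_{\mathrm{off}}(A)|=\dim X_\mu^H(\g)-\dim X_{\l_A}^M(\g)$. The paper then verifies this identity by an entirely elementary count: setting $m_{A,\t}^k=\sum_{b\in\bar A_{\t+1}^k}\varphi_A(b)$ (so $m_{A,\t}^k$ just counts how many $b$ have $\varphi_A(b)=1$, since $\mu$ minuscule forces $\varphi_A\in\{0,1\}$), it computes each $|V_{i,j}(A)|$ for $i<j$ as a product of such numbers, sums, and compares with the explicit $\GL$-type dimension formulas for $X_\mu^H(\g)$ and $X_{\l_A}^M(\g)$. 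Your option (i) is exactly this, just phrased through $\rho_N$, $\rho_M$ and defects rather than through the $m_{A,\t}^k$.

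One point to correct: you anticipate that the \emph{rigidity} inequality $\bar A^k+h\nleqslant\bar A^{k+1}$ is what forces $|V_{\mathrm{off}}(A)|$ to depend only on $\l_A$. In fact the paper's count uses only that $A$ is \emph{ordered}. Orderedness already gives $V_{i,j}(A)=\emptyset$ for $i>j$, and for $i<j$ every pair $(b,b')\in\bar A^i_\t\times\bar A^j_\t$ automatically has $b<b'$; hence $|V_{i,j}(A)|$ is simply the number of such pairs with $\varphi_A(b)=1$, $\varphi_A(b')=0$, a quantity determined by the $m_{A,\t}^k$ and therefore by $\l_A$ alone. The second rigidity inequality plays no role in this lemma; the hypothesis ``rigid'' is present only because that is the case needed downstream.

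Your option (ii) would be circular: the only fibration over $\cl({}^\g A)$ available here is $\b_{A,\iota}$ itself, and knowing its fiber dimension is $|V_{\mathrm{off}}(A)|$ does not compute that number independently of the combinatorics. Stay with the direct count.
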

\begin{proof}
Let $m_{A, \t}^k=\sum_{b \in \bar A_{\t+1}^k} \varphi_A(b)$ for $\t \in \ZZ_d$ and $1 \le k \le h$. Since $A$ is a semi-module of Hodge type $\mu$, we have $m_\t=\sum_{k'} m_{A, \t}^{k'}$ and $m'=\sum_{\t'} m_{A, \t'}^k$ for $\t \in \ZZ_d$ and $1 \le k \le h$. For $1 \le i, j \le h$ set $$V_{i, j}(A)=\{(b, k) \in V(A); b \in \bar A^i, b+k \in \bar A^j\}.$$ Then $V(A)=\sqcup_{i, j} V_{i, j}(A)$ and $V_{i, j}(A) = \emptyset$ unless $i \le j$ since $A$ is ordered. As $\mu$ is minuscule, we can assume $\varphi_A(b) \in \{0, 1\}$ for $b \in \bar A$. In particular, $m_{A, \t}^k=\sharp \{b \in \bar A_{\t+1}^k; \varphi_A(b)=1\}$. Thus \begin{align*} \sum_{i=1}^h \sharp V_{i, i}(A) &= \sum_{i=1}^h \dim \cl(A^i) = \dim \cl({}^\g A) \\ &\le \dim X_{\l_A}^M (\g) \\ &= -\frac{1}{2}h(n'-1) + \frac{1}{2} \sum_{1 \le k \le h} \sum_{\t \in \ZZ_d} (n'-m_{A, \t}^k) m_{A, \t}^k \\ &=-\frac{1}{2}h(n'-1) + \frac{1}{2}n' \sum_k \sum_\t m_{A, \t}^k - \frac{1}{2} \sum_k \sum_\t (m_{A, \t}^k)^2 \\ &= -\frac{1}{2}h(n'-1) + \frac{1}{2}hn'm' - \frac{1}{2} \sum_k \sum_\t (m_{A, \t}^k)^2.\end{align*} Moreover, for $1 \le i < j \le h$ we have \begin{align*} \sharp V_{i, j}(A) &=\{(b, b') \in \bar A^i \times \bar A^j, \varphi_A(b)=0, \varphi_A(b')=1\} \\ &=\sum_{\t \in \ZZ_d} m^j_{A, \t} (n' - m^i_{A, \t}).\end{align*} Hence \begin{align*} \sum_{1 \le i < j \le h} \sharp V_{i, j}(A) &=n' \sum_{\t \in \ZZ_d} \sum_{1 \le k \le h} (k-1)m_{A, \t}^k - \sum_\t \sum_{1 \le i < j \le h} m_{A, \t}^i m_{A, \t}^j \\ &= \frac{1}{2}h(h-1)n'm' - \sum_\t \sum_{1 \le i < j \le h} m_{A, \t}^i m_{A, \t}^j.\end{align*} Thus \begin{align*} \dim \cl(A) &= \sharp V(A) = \sum_{1 \le i \le j \le h} \sharp V_{i, j}(A)\\ &= \dim \cl({}^\g A) - \dim X_{\l_A}^M(\g) -\frac{1}{2}h(n'-1)  \\ & \quad\ + \frac{1}{2}h^2n'm' - \frac{1}{2}\sum_\t (\sum_{k=1}^h m_{A, \t}^k)^2  \\ &= \dim \cl({}^\g A) - \dim X_{\l_A}^M(\g) -\frac{1}{2}h(n'-1)  \\ & \quad\ + \frac{1}{2}h^2n'm' - \frac{1}{2} \sum_\t (m_\t)^2.\end{align*} On the other hand, \begin{align*} \dim X_\mu(\g) &=-\frac{1}{2}(n-h) + \frac{1}{2}\sum_\t (n-m_\t)m_\t \\ &= -\frac{1}{2}(n-h)+ \frac{1}{2} n\sum_\t m_\t - \frac{1}{2}\sum_\t (m_\t)^2 \\ &=-\frac{1}{2}(n-h)+ \frac{1}{2} n m - \frac{1}{2}\sum_\t (m_\t)^2  \\ &= \dim \cl(A) + \dim X_{\l_A}^M(\g)- \dim \cl({}^\g A) \\ &\ge \dim\cl(A). \end{align*} Therefore, the equality holds if and only if $\dim \cl({}^\g A) = \dim X_{\l_A}^M(\g)$, that is, ${}^\g A \in \cc_{\l_A}^\tp$.
\end{proof}

\begin{lem} \label{N-conj}
If $A \in \car_\mu^\tp$, then all the irreducible components of $\cl(A)$ are conjugate by $N(L) \cap \JJ_\g$.
\end{lem}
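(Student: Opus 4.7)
My plan is to exploit the Iwasawa-type morphism $\b_{A, \iota}: \cl(A) \to \cl({}^\g A)$ attached to the parabolic $P = MN$ in order to reduce the question to the superbasic case for $M$, and then to show that $N(L) \cap \JJ_\g$ acts transitively on irreducible components within each generic fiber.

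First I would reduce to the superbasic picture. Since $A \in \car_\mu^\tp$, Lemma \ref{top} yields ${}^\g A \in \cc_{\l_A}^\tp$. Because $\g$ is superbasic in $M(L)$ and $\l_A$ is $M$-minuscule, I would appeal to the irreducibility of top-dimensional strata $\cl(A^k)$ established in the superbasic case by Viehmann and Hamacher, so that $\cl({}^\g A) = \prod_k \cl(A^k)$ is irreducible of dimension $\sum_i |V_{i,i}(A)|$. A dimension count using $\dim \cl(A) = |V(A)| = \sum_{i \leq j} |V_{i,j}(A)|$ then shows that $\b_{A, \iota}$ is surjective with equidimensional fibers of dimension $\sum_{i<j} |V_{i,j}(A)|$, and that every irreducible component of $\cl(A)$ dominates $\cl({}^\g A)$.

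Next I would pick two components $\ct_1, \ct_2$ of $\cl(A)$, a generic $L \in \cl({}^\g A)$, and lifts $\L_i \in \ct_i$ of $L$. Since $N(L)$ lies in the kernel of the Iwasawa projection $\b_\g$, one can write $\L_2 = n \L_1$ for some $n \in N(L)$, and the task is to arrange $n \in \JJ_\g$. For this I plan to decompose $N = \prod_{i<j} N_{i,j}$ into root subgroups attached to the block decomposition $H = \prod_k M^k$, compute each $N_{i,j}(L) \cap \JJ_\g$ explicitly from the Frobenius-twisted equation $\s(n) = \g\i n \g$ combined with the $f$-action on $e_a$, and match the result with the Artin--Schreier description of the fiber $\b_{A, \iota}\i(L)$ coming from Lemma \ref{finite}. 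Superbasicity of $\g$ inside each block of $M$ is what makes $N_{i,j}(L) \cap \JJ_\g$ nontrivial and, in fact, large enough to realize the Artin--Schreier translations of the off-diagonal coordinates $x_{b, j'}$ with $(b, j') \in V_{i,j}(A)$.

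The main obstacle will be the transitivity step proper: showing that the combined action of the various $N_{i,j}(L) \cap \JJ_\g$ on the generic fiber is transitive on its set of irreducible components. I would carry this out by induction on a suitable lex order on the pairs $(i,j)$, showing at each stage that $N_{i,j}(L) \cap \JJ_\g$ absorbs all Artin--Schreier obstructions in the $(i,j)$-layer without disturbing the layers already arranged. Once this is done, $n \L_1 \in \ct_2$ for some $n \in N(L) \cap \JJ_\g$; since the height function $\h$ is $N(L)$-invariant, such an $n$ preserves $\cl(A)$ and therefore sends irreducible components to irreducible components, yielding $n \ct_1 = \ct_2$ and concluding the proof.
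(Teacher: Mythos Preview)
Your strategy shares its starting point with the paper's proof---both use the projection $\b_\g|_{\cl(A)}: \cl(A) \to \cl({}^\g A)$ and the irreducibility of the target coming from Lemma~\ref{top}---but the paper then finishes in one line by invoking \cite[Proposition~5.6]{HV}, which states directly that $N(L)\cap\JJ_\g$ acts transitively on $\Irr\bigl(X_\mu(\g)\cap\b_\g^{-1}(\cl({}^\g A))\bigr)$; the lemma follows from the inclusion $\Irr\cl(A)\subseteq\Irr\bigl(X_\mu(\g)\cap\b_\g^{-1}(\cl({}^\g A))\bigr)$. In other words, the step you flag as ``the main obstacle''---transitivity of $N(L)\cap\JJ_\g$ on fiber components---is exactly the content of the cited proposition, and your proposal amounts to reproving it in this particular setting. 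Your inductive plan over block pairs $(i,j)$ is plausible in outline, but as written it is a sketch rather than a proof; if you want a self-contained argument you will have to carry out that induction in full, and this is nontrivial.

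There is also a genuine slip in your last paragraph. The claim that ``the height function $\h$ is $N(L)$-invariant'' is false: the off-diagonal block entries of $n\in N(L)$ lie in $L$, not in $\co$, so $n\,e_a$ may acquire terms $e_{a'}$ with $a'<a$, and $A(n\L)\neq A(\L)$ in general. Thus $N(L)\cap\JJ_\g$ need not preserve $\cl(A)$ as a set, and you cannot infer $n\ct_1=\ct_2$ from $n\L_1\in\ct_2$ in the way you suggest. The repair is to work at the level of $\Irr X_\mu(\g)$: since $A\in\ca_\mu^\tp$, taking closures embeds $\Irr\cl(A)$ into $\Irr X_\mu(\g)$, and any $n\in\JJ_\g$ permutes the latter; genericity of $\L_2$ in $\ct_2$ then forces $n\,\overline{\ct_1}=\overline{\ct_2}$. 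This is precisely why the paper phrases everything inside $X_\mu(\g)\cap\b_\g^{-1}(\cl({}^\g A))$ rather than inside $\cl(A)$.
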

\begin{proof}
Consider the surjective projection $\b_\g |_{\cl(A)}: \cl(A) \to \cl({}^\g A)$. By Lemma \ref{top}, $\cl({}^\g A) \cong \prod_k \AA^{|V(A^k)|}$ is an irreducible component of $X_{\l_A}^M(\g)$. Then \cite[Proposition 5.6]{HV} tells that all the irreducible components of $X_\mu(\g) \cap \b_\g\i(\cl({}^\g A))$ are conjugate by $N(L) \cap \JJ_\g$. So the statement follows from the inclusion $\Irr \cl(A) \subseteq \Irr ( X_\mu(\g) \cap \b_\g\i(\cl({}^\g A)))$.
\end{proof}

\begin{lem} \label{equiv}
Let $A, A' \in \car^\tp_\mu$. If $\JJ_\g \Irr \cl(A) = \JJ_\g \Irr \cl(A') \neq \emptyset$, then $A \sim A'$.
\end{lem}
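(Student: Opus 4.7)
The plan is to reduce the given $\JJ_\g$-equivalence to a $\JJ_\g^M$-equivalence via a Bruhat-style decomposition, then invoke the superbasic case (Hamacher--Viehmann \cite{HV}) together with the rigidity hypothesis.

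Pick $\bc \in \Irr \cl(A)$, $\bc' \in \Irr \cl(A')$ and $j \in \JJ_\g$ with $j \bc = \bc'$. The group $\JJ_\g$ is an inner form of $H$, isomorphic to $\Res_{\FF_{q^d}/\FF_q} \GL_h(D)$ for a division algebra $D$ of degree $n'$, in which $\JJ_\g^M \cong \prod_k D_k^\times$ is the centralizer of a maximal split torus (each $D_k^\times$ being anisotropic modulo its center); consequently the relative Weyl group is $\fs_\g$, and we have the Bruhat decomposition $\JJ_\g = \bigcup_{s \in \fs_\g} (P \cap \JJ_\g) \cdot s \cdot (P \cap \JJ_\g)$ with $P \cap \JJ_\g = \JJ_\g^M \cdot (N(L) \cap \JJ_\g)$. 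Writing $j = p_1 s p_2$ and rearranging using $sm = (sms^{-1})s$ and the fact that $\JJ_\g^M$ normalizes $N(L) \cap \JJ_\g$, we obtain a factorization $j = m \cdot n_1 \cdot s \cdot n_2$ with $m \in \JJ_\g^M$, $n_1, n_2 \in N(L) \cap \JJ_\g$, $s \in \fs_\g$. Setting $\tilde \bc := n_1 s n_2 \bc$ and applying Lemma \ref{N-conj}, Lemma \ref{fix} (this is where rigidity of $A$ is used), and Lemma \ref{N-conj} again, we find $\tilde \bc \in \Irr \cl(A)$; since $m \tilde \bc = \bc'$, we may replace $\bc$ by $\tilde \bc$ and assume $j = m \in \JJ_\g^M$.

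Because $m \in P(L) \cap \JJ_\g$, the projection $\b_\g$ is $m$-equivariant, so applying $\b_\g$ to $m \bc = \bc'$ and comparing generic points yields $m \cl({}^\g A) = \cl({}^\g A')$ as irreducible components of $X_\l^M(\g)$; since $\JJ_\g^M$ preserves $M$-Hodge type, $\l := \l_A = \l_{A'}$. Now invoke the superbasic case: by Hamacher--Viehmann's work \cite{HV}, the $\JJ_\g^M$-orbits on $\Irr X_\l^M(\g) = \prod_k \Irr X_{\l^k}^{M^k}(\g)$ are parameterized by $\tilde \cc_\l^\tp = \cc_\l^\tp / \Omega_\g$. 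Therefore ${}^\g A$ and ${}^\g A'$ lie in the same $\Omega_\g$-orbit, which provides integers $p_1, \dots, p_h \in \ZZ$ with $\bar A'^k = \bar A^k + p_k h$ for each $1 \le k \le h$.

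To finish, we show $p_1 = \cdots = p_h$. The inequality $\bar A'^k \le \bar A'^{k+1}$ (rigidity of $A'$) becomes $\bar A^k + (p_k - p_{k+1}) h \le \bar A^{k+1}$; if $p_k > p_{k+1}$ this forces $\bar A^k + h \le \bar A^{k+1}$, violating rigidity of $A$. Symmetrically, $\bar A'^k + h \nleqslant \bar A'^{k+1}$ reads $\bar A^k + (p_k - p_{k+1} + 1) h \nleqslant \bar A^{k+1}$; if $p_{k+1} > p_k$, the left-hand side is $\le \bar A^k \le \bar A^{k+1}$ by rigidity of $A$, a contradiction. Hence $p_k$ is constant, giving $A' = A + p h$ for the common value $p$, i.e., $A \sim A'$. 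The main obstacle is the Bruhat reduction in the first step: one must correctly identify $\JJ_\g^M$ as the centralizer of a maximal split torus in $\JJ_\g$ with relative Weyl group $\fs_\g$, and then execute the absorption so as to never leave $\Irr \cl(A)$, which crucially uses rigidity through Lemma \ref{fix}.
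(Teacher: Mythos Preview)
Your Bruhat reduction has a genuine gap. Lemma~\ref{N-conj} asserts only that the irreducible components of $\cl(A)$ lie in a single $(N(L)\cap\JJ_\g)$-orbit; it does \emph{not} say that $N(L)\cap\JJ_\g$ stabilizes the set $\Irr\cl(A)$. Hence you cannot conclude from it that $n_2\bc$ (or $n_1 s n_2\bc$) remains in $\Irr\cl(A)$. And there is no obvious reason why it should: an element of $N(L)\cap\JJ_\g$ may involve coefficients of arbitrarily negative $t$-valuation, so for $a\in O^k$ it can add to $e_a$ a term $e_{a'}$ with $a'\in O^l$, $l>k$, but $a'<a$, and there is no height argument forcing $A(n\L)=A(\L)$.

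The paper circumvents this by replacing your parabolic decomposition with the \emph{affine} Bruhat decomposition
\[
\JJ_\g=(\JJ_\g\cap I)\,\fs_\g\,\Omega_\g\,(\JJ_\g\cap I),
\]
where $I$ is the standard Iwahori, characterized by $I(e_a)\subseteq\kk^\times e_a+\sum_{j\ge1}\kk e_{a+j}$. Since $A(\L)$ is defined via heights and $I$ preserves heights, the subgroup $\JJ_\g\cap I$ manifestly preserves each stratum $\cl(A)$; this is the fact you were missing for $N(L)\cap\JJ_\g$. After absorbing the Iwahori factors and applying Lemma~\ref{fix} to $\fs_\g$ (where rigidity enters), one is left directly with an element $\o\in\Omega_\g$, whose action on semi-modules is explicit: via $\b_\g$ one reads off ${}^\g(A')=\o({}^\g A)$, and then rigidity forces all shift exponents to coincide---exactly your final paragraph. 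In particular, once the correct decomposition is used, the detour through $\JJ_\g^M$ and the superbasic case of \cite{HV} becomes unnecessary.
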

\begin{proof}
Let $I \subseteq H(L)$ be the (standard) Iwahori subgroup such that $$I(e_a) \subseteq \kk^\times e_a + \sum_{j=1}^\infty \kk e_{a+j}.$$ By Bruhat decomposition we have $$\JJ_\g=(\JJ_\g \cap I) \Omega_\g \fs_\g (\JJ_\g \cap I) = (\JJ_\g \cap I) \fs_\g \Omega_\g (\JJ_\g \cap I),$$ where $\Omega_\g, \fs_\g \subseteq \JJ_\g$ are defined in \S\ref{omega} and \S\ref{tight}, respectively. By definition, $\JJ_\g \cap I$ preserves $\cl(A)$ and $\cl(A')$. Thanks to Lemma \ref{fix}, $\fs_\g$ preserves $\Irr \cl(A)$ and $\Irr \cl(A')$. Thus there exist $\L \in \cl(A)$ and $\o \in \Omega_\g$ such that $\o \L \in \cl(A')$. So $${}^\g(A')= C(\b_\g(\o \L))= C(\o \b_\g(\L)) = \o C(\b_\g(\L)) = \o ({}^\g A).$$ Thus $A \sim A'$ since $A$ and $A'$ are rigid.
\end{proof}

By the construction of $P=M N$, there exits $z=\s(z)$ in the Weyl group $W_H$ of $T_H$ in $H$ such that ${}^z P = {}^z M {}^z N$ is a standard parabolic subgroup. Moreover, we can assume further that ${}^z(B_H \cap M)=B_H \cap {}^z M$. In particular, $z(\l_M(\g))=\l_{{}^z M} ({}^z \g) = \l_H({}^z \g) =\l_H(\g)$ and ${}^z I_{\mu, \g, M}=I_{\mu, {}^z \g, {}^z M}$.
\begin{cor} \label{main''}
We have natural bijections $$\JJ_\g \backslash \Irr X_\mu(\g) \overset \sim \longrightarrow  \tilde \car_\mu^\tp(G)  \overset \sim \longrightarrow  \sqcup_{\l \in I_{\mu, \g}} \tilde \cc_\l^{\tp}.$$ In particular, $$|\JJ_\g \backslash \Irr X_\mu(\g)|=\dim V_\mu^{\hat H}(\l_H(\g)).$$
\end{cor}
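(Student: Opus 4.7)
The plan is to establish the two bijections of the first assertion separately, then reduce the cardinality count to the superbasic case in the Levi $M$. For the first bijection $\JJ_\g \backslash \Irr X_\mu(\g) \overset{\sim}{\to} \tilde \car_\mu^\tp$, I would send a $\JJ_\g$-orbit of a component $\bc$ to the class $[A]$ supplied by Corollary \ref{to-rig}: there is $j \in \JJ_\g$ with $j\bc \in \Irr \cl(A)$ for some rigid $A$, and the top-dimensionality $\dim \bc = \dim X_\mu(\g)$ forces $A \in \car_\mu^\tp$. The identity $\cl(A+kh) = (\o_1 \cdots \o_h)^k \cl(A)$ shows the assignment descends modulo $\sim$, and Lemma \ref{N-conj} ensures that all components of $\cl(A)$ lie in a single $\JJ_\g$-orbit, so the map is well-defined and surjective. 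Injectivity is Lemma \ref{equiv}.

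For the second bijection $\tilde \car_\mu^\tp \overset{\sim}{\to} \bigsqcup_{\l \in I_{\mu, \g}} \tilde \cc_\l^\tp$, I would send $[A]$ to $[{}^\g A] \in \tilde \cc_{\l_A}^\tp$. Well-definedness follows from the definitions ($\l_A \in I_{\mu, \g}$ since $A$ has Hodge type $\mu$, ${}^\g A \in \cc_{\l_A}^\tp$ by Lemma \ref{top}, and a uniform shift $A \to A+kh$ corresponds to the diagonal $\Omega_\g$-action by $(\o_1 \cdots \o_h)^k$ on ${}^\g A$). For the inverse, given $C = (C^1, \dots, C^h) \in \cc_\l^\tp$ with $\l \in I_{\mu, \g}$, I would choose offsets $p_1, \dots, p_h \in \ZZ$, unique up to a common additive constant, so that the shifted tuple $A := (p_k h + C^k)_{k=1}^h$ satisfies both rigidity inequalities $\bar A^k \le \bar A^{k+1}$ and $\bar A^k + h \not\le \bar A^{k+1}$; the lift $A$ is a semi-module for $H$ because each $C^k$ is one for $M^k$, and the Hodge-type condition is guaranteed by $\l \in I_{\mu, \g}$, so Lemma \ref{top} places $A$ in $\car_\mu^\tp$.

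With the two bijections in hand, it remains to compute $|\tilde \cc_\l^\tp|$. Since $\g$ is superbasic in $M(L) = \prod_{k=1}^h M^k(L)$, applying the semi-module stratification of Corollary \ref{dec} to each factor yields $\Irr X_\l^M(\g) = \bigsqcup_{C \in \cc_\l^\tp} \Irr \cl(C)$ with each $\cl(C)$ irreducible, and identifies $\JJ_\g^M$-orbits on this set with $\Omega_\g$-orbits, so $|\JJ_\g^M \backslash \Irr X_\l^M(\g)| = |\tilde \cc_\l^\tp|$. The superbasic case of the Chen--Zhu conjecture, proved in \cite[Theorem 1.5]{HV}, identifies this cardinality with $\dim V_\l^{\hat M}(\l_M(\g))$. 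Conjugation by the $\s$-fixed Weyl element $z$ reduces $M$ to the standard Levi ${}^z M$, and the standard compatibility of the best integral approximation under Levi restriction (recalled in the introduction) gives $\l_M(\g) = \l_H(\g)$. Summing over $\l \in I_{\mu, \g}$ and applying the minuscule branching decomposition $V_\mu^{\hat H} = \bigoplus_{\l \in I_{\mu, \g}} V_\l^{\hat M}$ (as in the proof of Proposition \ref{minu}) yields the asserted identity $|\JJ_\g \backslash \Irr X_\mu(\g)| = \dim V_\mu^{\hat H}(\l_H(\g))$.

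The delicate point is the surjectivity in the second bijection: one must verify that the two rigidity conditions admit a simultaneous solution in each $\Omega_\g$-orbit, with relative offsets $p_{k+1} - p_k$ uniquely determined by $C$ and only a common additive shift left free. This reduces to a coordinate-wise analysis of the finite sets $\bar C^k_\t$ of size $n'$ across $k$ and $\t$, and relies on the product-order structure of the partial order $\le$ on subsets of $O$.
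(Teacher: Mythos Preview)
Your approach is essentially the paper's: the first bijection via Corollary~\ref{to-rig}, Lemma~\ref{N-conj}, and Lemma~\ref{equiv}; the second via $A \mapsto {}^\g A$ and Lemma~\ref{top}; then the count via \cite[Theorem~1.5]{HV} and minuscule branching. Your explicit description of the inverse to the second bijection (choosing offsets $p_k$ to achieve rigidity) is a useful elaboration the paper leaves implicit.

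Two small imprecisions to fix. First, you invoke the introduction's compatibility $\l_M(\g)=\l_H(\g)$, but that statement is for \emph{standard} Levi subgroups, whereas $M$ here is only semi-standard. The paper handles this by conjugating: one has $\l_H(\g)=z(\l_M(\g))$ with $z=\s(z)\in W_H$, and then uses that $\dim V_\mu^{\hat H}$ is constant on Weyl orbits of weights. Second, your assertion that each $\cl(C)$ is irreducible in the superbasic $M$-case is not justified by Lemma~\ref{finite} alone (finite smooth over $\AA^{V(C)}$ allows multiple sheets); the paper sidesteps this by directly quoting \cite[Theorem~1.5]{HV} for $|\tilde\cc_\l^\tp|=\dim V_\l^{\hat M}(\l_M(\g))$ rather than re-deriving the orbit count. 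Neither point affects the overall architecture, which matches the paper.
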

\begin{proof}
The first bijection is due to Corollary \ref{to-rig} and Lemma \ref{N-conj} and Lemma \ref{equiv}. The second bijection is induced by the map $A \mapsto {}^\g A$ (see Lemma \ref{top}). As $\g$ is superbasic in $M(L)$, \cite[Thoerem 1.5]{HV} tells that $|\tilde \cc_\l^\tp|=\dim V_\l^{\hat M}(\l_M(\g))$. Thus $$|\JJ_\g \backslash \Irr X_\mu(\g)| = \sum_{\l \in I_{\mu, \g}} |\tilde \cc_\l^\tp| = \sum_{\l \in I_{\mu, \g}} \dim V_\l^{\hat M}(\l_M(\g)) = \dim V_\mu^{\hat H}(\l_M(\g)),$$ where the last equality follows from a similar argument in Proposition \ref{minu}. Notice that $\l_H(\g)=z(\l_M(\g))$ for some element $z=\s(z) \in W_H$. Therefore, $\dim V_\mu^{\hat H}(\l_M(\g)) = \dim V_\mu^{\hat H}(\l_H(\g))$ and the last statement follows.

%Therefore, \begin{align*} |\JJ_b \backslash \Irr X_\mu(\g)| &= \sum_{\l \in I_{b, \mu}} |\cc_\l^\tp| \\  &= \sum_{\l \in I_{b, \mu}} \dim V_\l^M(\l_M(\g)) \\ &=  \sum_{\l \leq \mu \text{ is $M$-dominant }} \dim V_\l^M(\l_M(\g)) \\ &= \dim V_\mu(\l_M(\g)) \\  &= \dim V_\mu(\l_H(\g)), \end{align*} where the third equality follows from that for any $M$-dominant cocharacter $\l \leq \mu$ one has $V_\l^M(\_M(\g))=0$ unless $\l \in I_{\mu, \g}$; the fourth one follows from the decomposition $V_\mu = \oplus_\l V_\l^M$ since $\mu$ is minuscule; The last one follows from the that.
\end{proof}

\end{document}